\numberwithin{equation}{section}
\newtheorem{thm}{Theorem}[section]
\newtheorem{hyp}{Assumption}[section]
\newtheorem{lem}{Lemma}[section]
\newtheorem{rmk}{Remark}[section]
\newtheorem{cor}{Corollary}[section]
\newtheorem{prop}{Proposition}[section]
\renewcommand{\theequation}{\thesection.\arabic{equation}}
\newcommand{\opnorm}{\@ifstar\@opnorms\@opnorm}
\newcommand{\@opnorms}[1]{%
  \left|\mkern-1.5mu\left|\mkern-1.5mu\left|
   #1
  \right|\mkern-1.5mu\right|\mkern-1.5mu\right|
}
\newcommand{\@opnorm}[2][]{%
  \mathopen{#1|\mkern-1.5mu#1|\mkern-1.5mu#1|}
  #2
  \mathclose{#1|\mkern-1.5mu#1|\mkern-1.5mu#1|}
}
\numberwithin{equation}{section}
\newcommand{\beq}{\begin{equation*}}
\newcommand{\eeq}{\end{equation*}}
\newcommand{\ben}{\begin{eqnarray}}
\newcommand{\een}{\end{eqnarray}}
\newcommand{\beno}{\begin{eqnarray*}}
\newcommand{\eeno}{\end{eqnarray*}}
\let\f=\frac
\renewcommand{\theequation}{\arabic{section}.\arabic{equation}}
\newcommand{\bit}{\begin{itemize}}
\newcommand{\eit}{\end{itemize}}
\newcommand{\N}{{\mathbb N}}
\newcommand{\Z}{{\mathbb Z}}
\newcommand{\R}{{\mathbb R}}
\newcommand{\pa}{\partial}
\newcommand{\ba}{\begin{aligned}}
\newcommand{\ea}{\end{aligned}}
 \def\na{\nabla}
\let\pa=\partial
\newcommand{\mP}{{\mathscr P}}
\def\cA{{\mathcal A}}
\def\cB{{\mathcal B}}
\def\cC{{\mathcal C}}
\def\cD{{\mathcal D}}
\def\cE{{\mathcal E}}
\def\cH{{\mathcal H}}
\def\cI{{\mathcal I}}
\def\cJ{{\mathcal J}}
\def\cL{{\mathcal L}}
\def\cP{{\mathcal P}}
\def\cQ{{\mathcal Q}}
\def\cR{{\mathcal R}}
\def\CC{\mathbf{C}}
\def\RR{\mathbf{R}}
\def\NN{\mathbf{N}}
\def\H1A{{\mathsf{H}^1_{\mathsf{A}}}}
\def\virgp{\raise 2pt\hbox{,}}
\def\cdotpv{\raise 2pt\hbox{;}}
\def\C{\mathop{\mathbb C\kern 0pt}\nolimits}
\def\DD{\mathop{\mathbb D\kern 0pt}\nolimits}
\def\EE{\mathop{{\mathbb E \kern 0pt}}\nolimits}
\def\K{\mathop{\mathbb K\kern 0pt}\nolimits}
\def\N{\mathop{\mathbb N\kern 0pt}\nolimits}
\def\Q{\mathop{\mathbb Q\kern 0pt}\nolimits}
\def\R{\mathop{\mathbb R\kern 0pt}\nolimits}
\def\SS{\mathop{\mathbb S\kern 0pt}\nolimits}
\def\<{\langle}
\def\>{\rangle}
\def\ls{\lesssim}
\def\S{\mathbb{S}}
\def\sss{\mathsf s}
\newcommand{\br}[1]{\left\langle {#1} \right\rangle}
\newcommand{\rr}[1]{\left( {#1} \right)}
\newcommand{\nr}[1]{\left| {#1} \right|}
\newcommand{\LLO}[1]{\| #1 \|_{L^1}}
\newcommand{\Lab}[3]{\left\| {#1} \right\|_{L^{#2}_{#3}}}
\newcommand{\Hab}[3]{\left\| {#1} \right\|_{H^{#2}_{#3}}}
\def\nn{{\mathrm n}}
\newcommand{\wei}{{\langle v \rangle}}%v with a cn accent
\newcommand{\weis}{{\langle v_* \rangle}}
\newcommand\ds{\displaystyle}
\newcommand\dD{\mathrm{d}}
\title[Numerical analysis of Landau equation]{Numerical analysis of the homogeneous Landau equation: approximation, error estimate and  simulation}
\keywords{Kinetic theory; Landau equation;  Fourier/Galerkin spectral method}
\subjclass[2010]{
  Primary: 65M15, 82C40, 35B65 
  Secondary: 35R09, 65M70, 65N35, 
}
\begin{document}

 \author{Francis Filbet}
 \address[Francis Filbet]{Institut de Math\'ematiques de Toulouse,
   Universit\'e de Toulouse, F-31062, France}
 \email{francis.filbet@math.univ-toulouse.fr}

 \author{Yanzhi Gui}
 \address[Yanzhi Gui]{Department of Mathematical Sciences, Tsinghua University, Beijing, 100084, P.R. China.}
 \email{guiyz22@mails.tsinghua.edu.cn}

 \author{Ling-Bing He}
 \address[Ling-Bing He]{Department of Mathematical Sciences, Tsinghua University, Beijing, 100084, P.R. China.}
 \email{hlb@tsinghua.edu.cn}

 %%%%%%%%%%%%%%%%%%%%%%%%%%%%%%%%%%%%%%%%%%%%%%%%%%%%%%%%%%%%%%%%%%%%%%%%%%%%%%%%%%%%%%%%%%%%%
 %
 %
 %
 %%%%%%%%%%%%%%%%%%%%%%%%%%%%%%%%%%%%%%%%%%%%%%%%%%%%%%%%%%%%%%%%%%%%%%%%%%%%%%%%%%%%%%%%%%%%%

\begin{abstract} We construct a numerical solution to the spatially homogeneous Landau equation with Coulomb potential on a domain $\cD_L$ with $N$ retained Fourier modes. By deriving an explicit error estimate in terms of $L$ and $N$, we demonstrate that for any prescribed error tolerance and fixed time interval $[0,T]$, there exist choices of $\cD_L$  and $N$ satisfying explicit conditions such that the error between the numerical and exact solutions is below the tolerance. Specifically, the estimate shows that sufficiently large $L$ and $N$ (depending on initial data parameters and $T$) can reduce the error to any desired level. Numerical simulations based on this construction are also presented. The results in particular demonstrate the mathematical validity of the spectral method proposed in the referenced literature.
\end{abstract}

\maketitle

\tableofcontents

\section{Introduction and main results}
\label{sec1}
\setcounter{equation}{0}
\setcounter{figure}{0}
\setcounter{table}{0}

The Landau equation is a fundamental kinetic equation that describes
the evolution of the distribution of charged particles in a
collisional plasma \cite{ref41}, particularly in regimes where grazing
collisions dominate \cite{ref20,Vi98}. Alongside the Boltzmann
equation, it is considered one of the most important equations in
kinetic theory. Recently, it has garnered renewed interest in
computational plasma physics due to its significant applications in
fusion reactors. The Landau equation governs the evolution of the
charged particles' mass distribution function, $f(t,x,v)$, in phase
space $(x,v)\in\Omega\times \R^3$, and is expressed as follows:
$$
\frac{\partial f}{\partial t} + v\cdot \nabla_x f \,=\, \cQ(f,f),
$$
while the bilinear collision operator $\cQ$ is defined as
\beq
\cQ(g,h)\,:=\,\nabla \cdot\left([a * g] \,\nabla h\,-\,[a * \nabla g] \,h\right),
\eeq
where the positive semidefinite matrix $a=(a_{ij})_{3\times3}$ takes the form
\ben
\label{eq:aij}
a_{i j}(z):=|z|^{\gamma+2} \,\Pi_{i j}(z), \qquad \Pi_{i j}(z)\,:=\,\delta_{i j}\,-\,\frac{z_i z_j}{|z|^2},
\een
with $-4\leq \gamma\leq 1$.  The most interesting case corresponds to
$ \gamma=-3$ (Coulomb case) associated with the physical interaction in plasmas,
which will be considered here. The case $\gamma=0$ is usually referred to as the Maxwellian case since the equation is reduced to a sort of degenerate linear Fokker-Planck equation preserving the same moments as the Landau equation \cite{ref55}.

This equation is derived from the Boltzmann equation by incorporating a cutoff Rutherford cross section within the framework of the weak-coupling regime. This groundbreaking work has laid the foundation for understanding the kinetic behavior of particle systems in plasmas and other related fields, emphasizing the importance of collective interactions among charged particles.

The primary objective of this work is to conduct a comprehensive numerical investigation of the spatially homogeneous Landau equation with Coulomb potential. This includes the development of a novel numerical approximation, derivation of corresponding error estimates, and detailed simulations. This study is significant because it enhances our understanding of the Landau equation, a fundamental model in kinetic theory that describes the dynamics of particle systems. Accurate numerical solutions are crucial for analyzing complex phenomena such as plasma behavior and statistical mechanics.

\subsection{Brief review on Landau equation}
\label{sec11}

Here, we consider the spatially homogeneous  Landau equation:
\begin{equation}\label{1}
\begin{cases}
\ds\partial_t f \,=\, \cQ(f,f)\,, \\[0.9em]
\ds f|_{t=0} = f_0.
\end{cases}
\end{equation}
The Landau equation provides crucial insights into the
dynamics of such systems, making it an essential tool for researchers
in statistical mechanics and plasma physics.  Solutions $f$ to \eqref{1} formally conserve mass, momentum, and energy,
that is,
$$
\int_{\R^3} f(t,v) \, \varphi(v) \, \dD v = \int_{\R^3} f_0(v) \,
\varphi(v) \, \dD v\,, \,\text{for}\quad \varphi(v) = 1,\, v_i,\,
\frac{|v|^2}2,\quad i\,=\,1,\,2,\,3.
$$
This follows from the weak formulation:
\begin{eqnarray*}
  &&\ds\int_{\R^3} \cQ(f,f) (v)\, \varphi(v) \, \dD v\,=\,
  \\
&& \ds - \frac12 \, \sum_{i=1}^3\sum_{j=1}^3 \iint_{\R^3 \times \R^3} a_{ij}(v-v_{*})\bigg\{ \frac{\partial_j f}{f}(v) - \frac{\partial_j f}{f}(v_{*})  \bigg\} \bigg\{ \partial_i \varphi(v) - \partial_i \varphi(v_{*})  \bigg\}   
                                             f(v) f(v_{*}) \, \dD v \,
       \dD v_{*}.
\end{eqnarray*}
Defining the macroscopic quantities: 
\begin{equation*} 
\rho_f:= \int_{\R^3} f(v) \, \dD v, \quad 
u_f := \frac{1}{\rho_f} \int_{\R^3} v \,f(v) \, \dD v, \quad
T_f := \frac{1}{3 \rho_f} \int_{\R^3} |v-u|^2\, f(v) \, \dD v,
\end{equation*}  
we introduce the relative entropy:   %(when $f: = f(t,v)$ is a solution of eq.(\ref{1}-\ref{13d})  by
$$
\cH(f\;|\;\mu_{f})(t) \,:=\, \int_{\R^3} \bigg(f(t,v) \log \bigg(
\f{f(t,v)}{\mu_{f}(v)} \bigg) -f(t,v)
+\mu_{f}(v)\bigg) \, \dD v,
$$
 where $\mu_{f}$ is the associated Maxwellian:
 $$
\mu_{f}(v) \,:=\, \frac{\rho_f}{(2\,\pi\, T_f)^{3/2}} \, \exp\left(-\frac{|v-u_f|^2}{2\,T_f}\right).
$$ 
  
  The formal $\cH$-Theorem states: 
$$
\frac{\dD}{\dD t}\cH(f\,|\;\mu_{f})(t) = -\cD(f(t,\cdot)) \le 0,
$$
with entropy dissipation:
\begin{eqnarray*}
\cD(f)  &:=& - \ds\int_{\R^3} \cQ(f,f)(v)\, \log f(v) \, \dD v
\\
&=&  \ds\frac12 \sum_{i,j=1}^3\iint_{\R^3 \times \R^3}
    a_{ij}(v-v_{*}) \left\{ \frac{\partial_i f}{f}(v) -
    \frac{\partial_i f}{f}(v_{*})  \right\} \left\{ \frac{\partial_j
    f}{f}(v) - \frac{\partial_j f}{f}(v_{*})  \right\}   \, f(v)\,
    f(v_{*}) \, \dD v \, \dD v_{*}.
    \end{eqnarray*}

In the following, we provide a brief review of previous work from both analytical and numerical perspectives. We focus on results that are closely related to our problem. On one hand, we refer to \cite{Guo} for the global result within the symmetric perturbation framework. This result was later extended to a more general setting in \cite{CM}. Furthermore, recent work \cite{HST} discusses the short-term existence of solutions with rough initial data. On the other hand, we cite \cite{DV1,DV2} for discussions on the existence, regularity, and global dynamics associated with hard potentials. In \cite{Vi98}, Villani established the global existence of $\mathcal{H}$-solutions for soft potentials. Desvillettes \cite{D15} demonstrated that $\mathcal{H}$-solutions are sufficiently smooth, confirming that they qualify as weak solutions in the conventional sense. Subsequently, in \cite{CDH,DHJ}, the authors explored the long-term behavior of $\mathcal{H}$-solutions and the associated entropy-energy functional. For the uniqueness results, \cite{FG} presents specific criteria, while \cite{HJL} provides a direct proof within the negative weighted Sobolev spaces. More recently, \cite{GS} established a monotonicity property for Fisher information, which leads to the existence of a global classical solution (see also \cite{GGL} for further extensions). Lastly, we highlight \cite{HJL} for insights into sharp regularity estimates when the initial data exhibit polynomial or exponential tails.

Regarding numerical approximation of the Landau equation, we first refer to finite difference methods initially developed in \cite{DD,BCDL,L98} and studied more recently in \cite{W17,W24}. Hermite spectral methods have also been proposed in \cite{LWW,LRW}. Nowadays, particle methods are increasingly popular, as discussed in \cite{F09} and more recent works such as \cite{CDW,CHWW,CJT,HW,IHW,DLXY}. Direct Simulation Monte Carlo (DSMC) methods are also applied \cite{Bi,BP}, and recently, neural network-based numerical approaches have gained attention \cite{LJH,MC,NLY}. A widely used approach is the Fourier-Galerkin spectral method \cite{PRT}, which leverages the convolutional structure of the collision integral for efficient implementation via Fast Fourier Transform (FFT). Specifically, the complexity of evaluating the collision operator at each time step is $O(N \log N)$, where $N$ represents the total number of velocity modes. Additional properties and applications of spectral methods, especially for inhomogeneous problems via time-splitting techniques, are discussed in \cite{BGZ,FP,PG,ZG}.

This approach introduces the concept of the {\it periodic Landau collision operator} to facilitate the application of the Fourier spectral method. Numerical solutions are obtained by retaining a finite number of Fourier modes, thereby reducing the problem to a system of ordinary differential equations (ODEs) for the Fourier coefficients.

It is important to note that, despite the significance of
deterministic methods for simulating Landau-type equations, there has
been limited work dedicated to the convergence analysis of these
schemes toward the exact solutions. Error estimates have, however,
been developed in \cite{FM, AGT, HQY} for the spectral method used to solve the Boltzmann equation. More recently, Pennie and Gamba \cite{PG} obtained an error estimate for the conservative spectral method approximation of the space-homogeneous Landau equation with hard potentials, where $0 \leq \gamma \leq 1$.

This work addresses a central challenge in the numerical analysis of
the spatially homogeneous Landau equation for Coulomb potentials,
which are of particular physical relevance. We propose a rigorous
analysis of high-order accurate numerical schemes, accompanied by
provable error estimates. Specifically, our goal is to develop
high-accuracy numerical methods that preserve the key structural
properties of the Landau equation ({\it e.g.}, non-negativity) and to derive
error bounds that quantify their convergence rates in suitable norms.

In the following we first introduce the  appropriate functionnal framework and defined the truncated problem, where
the Landau equation is restricted to a bounded velocity domain. Then
we provide an approximation based on a periodized procedure and the
use of Fourier spectral method for the Landau equation. Finally, we
state our main results on error estimates between the approximated solution and the exact solution to the Landau equation \eqref{1}. 

\subsection{Notations and function spaces}
\label{sec12}

We begin by introducing the key notations and function spaces used
throughout this work. We define the inner product $\langle ., . \rangle$ as
$$
\langle f, g \rangle := \int_{\mathbb{R}^3} f(v)g(v) \dD v
$$
and the  Japanese bracket is $\langle v \rangle :=
\sqrt{1+|v|^2}$. Then we define  the weighted Lebesgue  space
$L_l^p$, for $p\geq 1$ and $l\in\R$ as
$$
L_l^p \,:=\, \left\{ f: \R^3 \mapsto \R,  \,\quad \| f \|_{L_l^p} = \left( \int_{\mathbb{R}^3} |f(v)|^p \langle v \rangle^{lp} \, \dD v \right)^{1/p} < \infty \right\},
$$
which will be used mostly with $p=1$ and $2$. For any $n\in\N$ and
real number $l\in\R$, we also define $H_l^n$
as
\begin{equation}
  \label{def:Hkl}
H_l^n \,:=\, \left\{ f:\R^3\mapsto \R, \, \quad  \| f \|_{H^n_l} \,:=\,  \left(\sum_{|\alpha| \leq n} \| \partial_v^\alpha f \|_{L^2_l}^2\right)^{1/2} \,<\, \infty\, \right\}.  
\end{equation}

Before going further, let us make the following assumption on the exact solution to equation \eqref{1}.

\begin{hyp}[Basic Assumption on the initial data]\label{TAES}
Let us suppose  that the initial data $f_0$  to \eqref{1}
is nonnegative  such that
\begin{equation}
  \label{hyp:0}
\int_{\R^3} f_0(v) \left(\begin{array}{l} 1\\ v \\
                           |v|^2\end{array}\right) \dD v = \left(\begin{array}{l} 1\\ 0 \\
                           \ds 3 \end{array}\right)
\end{equation}
and
\begin{equation}
  \label{hyp:1}
f_0\in L^1_{\ell}\cap H^{n+2}_{k+l},
\end{equation}
where $(\ell,k,l)\in \R_+^3$ and $n\in\N$ satisfy $k>9/2$, $l>3$,
$n\ge 5$ and $\ell>19/2$ is sufficiently large  such that
$$
2\,(k+l)\,+\,3\,(n+2)\,+\,\f32\,<\,\frac{\ell(2 \,\ell^2\,-\,25\, \ell\,+\,57)}{2\,\ell^2-\,7\,\ell\,+\,21}.
$$ 
\end{hyp}
 
 The assumptions on the initial data  $f_0$ serve distinct but
 interconnected purposes in our analysis. First  moment Condition
 $f_0\in L^1_{\ell}$  will provide a control over the decay properties
 of $f_0$, enabling us to derive explicit convergence rates to the
 equilibrium. Such rates are crucial for establishing uniform-in-time
 bounds on the solution in weighted Sobolev spaces. Furthermore,
 regularity and decay conditions
 $$
 f_0\in H^{n+2}_{k+l}
 $$
 will  ensure sufficient smoothness for the propagation of regularity in the
 solution for the short-time.

To address these problems, our numerical approach is grounded in the
mathematical analysis of the equation, including its well-posedness,
smoothing properties, and the propagation of Fisher information.

%%%%%%%%%%%%%%%%%%%%%%%%%%%%%%%%%%%%%%%%%%%%%%%%%%%%%%%%%%%%%%
%
% Proposition 1.
%
%%%%%%%%%%%%%%%%%%%%%%%%%%%%%%%%%%%%%%%%%%%%%%%%%%%%%%%%%%%%%%

\begin{prop}
\label{thm:fHnl}
Under the Assumption \ref{TAES} on $f_0$ and parameters
$(\ell,n,k,l)$, the equation \eqref{1} admits an unique global
solution $f\in L^\infty([0,\infty);H^{n+2}_{k+l})$ with initial data
$f_0$. We denote the bound by a constant $M$ depending only on $\ell$,
$n$, $k$ and $l$, that is,
\beq
\|f\|_{L^\infty([0,\infty);H^{n+2}_{k+l})}<M.
\eeq
\end{prop}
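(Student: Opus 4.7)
The plan is to combine the recent global classical well-posedness theory based on monotonicity of Fisher information with sharp weighted Sobolev regularity estimates, and then patch a short-time propagation argument with a long-time relaxation argument to obtain the uniform-in-time bound. First, I would invoke the Guillen--Silvestre result \cite{GS} (and its extensions in \cite{GGL}) to obtain a global classical solution $f$ emanating from $f_0$: the regularity $f_0 \in H^{n+2}_{k+l}$ with $n\ge 5$, $k>9/2$, $l>3$ is far more than enough to ensure a finite initial Fisher information, which then remains nonincreasing along the flow. Simultaneously, testing \eqref{1} against $\langle v\rangle^\ell$ propagates (and in fact produces, in the Coulomb case) moments, so that $\sup_{t\ge 0}\|f(t)\|_{L^1_\ell}<\infty$ whenever $f_0\in L^1_\ell$.

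Second, I would propagate the weighted Sobolev norm $\|f\|_{H^{n+2}_{k+l}}$ on any bounded interval by a standard energy estimate. Differentiating $\|\langle v\rangle^{k+l}\partial_v^\alpha f\|_{L^2}^2$ and using the identity
$$
\frac{d}{dt}\|f\|_{H^{n+2}_{k+l}}^2 = 2\sum_{|\alpha|\le n+2}\langle \langle v\rangle^{2(k+l)}\partial_v^\alpha f,\partial_v^\alpha\cQ(f,f)\rangle,
$$
one splits $\cQ(f,f)$ into a coercive diffusion part, bounded below by $\rho_f$ times a weighted $H^1$ seminorm (using $a\ast f \gtrsim \langle v\rangle^{\gamma}\mathrm{Id}$ modulo lower-order terms and the Fisher information control), and remainder terms that are handled by weighted Gagliardo--Nirenberg interpolations of the form $\|f\|_{L^\infty}\lesssim \|f\|_{H^{n+2}_{k+l}}^{\theta}\|f\|_{L^1_\ell}^{1-\theta}$ (as developed in \cite{HJL}). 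This is precisely where the arithmetic condition
$$
2(k+l)+3(n+2)+\tfrac32<\frac{\ell(2\ell^2-25\ell+57)}{2\ell^2-7\ell+21}
$$
enters: it ensures that the interpolation exponent $\theta$ is strictly less than the one dictated by the dissipation, so that the nonlinear terms can be absorbed and a Gronwall argument yields a bound on $[0,T]$ for every finite $T$, depending only on $(\ell,n,k,l)$.

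Third, to upgrade this time-dependent bound to a uniform one, I would use long-time convergence to equilibrium. The $\cH$-theorem, together with the monotonicity of Fisher information and the quantitative trend-to-equilibrium results of \cite{DHJ,HJL}, implies that for some $T^\star=T^\star(\ell,n,k,l)$ the solution $f(t)$ lies, for all $t\ge T^\star$, in a small neighborhood of the Maxwellian $\mu_f$ in $H^{n+2}_{k+l}$. A perturbative energy estimate around $\mu_f$, in the spirit of \cite{Guo,CM}, then stays valid for all subsequent times and yields a constant bound. Combining the short-time estimate on $[0,T^\star]$ with this perturbative bound on $[T^\star,\infty)$ produces the claimed $M=M(\ell,n,k,l)$.

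The main obstacle is the singularity of the Coulomb kernel $a_{ij}(z)\sim |z|^{-1}\Pi_{ij}(z)$: in the energy estimate the contribution of $[a\ast \nabla f]\,\partial_v^\alpha f$ is supercritical with respect to the natural $H^1$ coercivity, so closing the estimate requires the full strength of the Fisher information bound from \cite{GS} to control $\int f|\nabla\log f|^2$ and tame the worst commutators. The explicit and somewhat unusual quantitative condition on $\ell$ is exactly the clean form of the resulting weighted interpolation inequality in the Coulomb regime; tracking the constants through this step is the technically delicate piece of the argument.
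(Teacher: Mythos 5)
Your first step (global existence via \cite{GS}/\cite{HJL} and propagation of $L^1$ moments) matches the paper, and your short-time propagation on $[0,T]$ is essentially Theorem~\ref{thm:coulombexist}-(3). But the heart of the proposition is the \emph{uniform-in-time} bound, and there your route diverges from the paper's and contains a gap. You propose to wait until $f(t)$ enters a small neighborhood of the Maxwellian \emph{in $H^{n+2}_{k+l}$} and then run a perturbative estimate in the spirit of \cite{Guo,CM}. The trend-to-equilibrium results you can actually invoke (\cite{CDH}, restated as Theorem~\ref{thm:coulombexist}-(5)) give convergence only in $L^1$ at a polynomial rate; upgrading that to smallness of $f-\mu$ in $H^{n+2}_{k+l}$ requires interpolating against a high-norm bound that is uniform in time --- which is exactly what you are trying to prove. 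You do not explain how to break this circularity, so as written the argument does not close.

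The paper avoids this entirely: it covers $[0,\infty)$ by overlapping intervals $I_j$ of fixed length and applies the \emph{smoothing estimate} of Theorem~\ref{thm:coulombexist}-(2) on each, which produces an $H^{n+2}_{k+l}$ bound at time $t\in[t_j,t_{j+1}]$ depending only on $\|f(t_{j-1})\|_{H^{-1/2}_{k+l+\frac32(n+2)+\frac34}}$. The whole problem is thus reduced to a $j$-independent bound on this very weak negative-Sobolev norm, which is obtained by writing $\|f\|_{H^{-1/2}_{m}}\lesssim\|f\|_{L^3}^{1/2}\|f\|_{L^1_{2m}}^{1/2}\lesssim \cI(f)^{1/2}\|f\|_{L^1_{2m}}^{1/2}$ and using (a) the monotonicity of the Fisher information $\cI(f)(t)\le\cI(f_0)$ from \cite{GS}, and (b) a time-uniform $L^1$ moment bound of order $2(k+l)+3(n+2)+\frac32$, produced by interpolating the algebraic entropy decay against the linear-in-time growth of $\|f\|_{L^1_\ell}$. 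This also corrects your reading of the arithmetic condition on $\ell$: it is not an absorption condition in a Gagliardo--Nirenberg step of the energy estimate, but precisely the requirement that one can choose $\beta<\frac{2\ell^2-25\ell+57}{9(\ell-2)}$ with $\frac{\ell\beta}{2+\beta}\ge 2(k+l)+3(n+2)+\frac32$, i.e.\ that the uniformly propagated moment index covers the one needed in the interpolation above. If you replace your step three by this smoothing-plus-weak-norm iteration, the proof closes; your Fisher-information input is used, but in the weak-norm control rather than in taming commutators in a direct energy estimate.
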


This proposition provides us with both a result of existence and
uniqueness of solution but also of regularity for the solution of the
Landau equation  \eqref{1}, which will subsequently allow us to obtain estimates
of errors between the approximate solutions and the solution to
\eqref{1}. The proof of Proposition \ref{thm:fHnl} will be given in
Section \ref{sec23}.

\subsection{Main results}
\label{sec13}

The objective of this study is on the one hand to construct an
approximation and on the other hand to provide error estimates by
considering a regular solution of the Landau equation. To do so, we
propose an approximation process based on the following two key steps:
on the one hand, we reduce the computational domain in velocity to a
bounded set $\cD_L$, where $L>0$, which ensures that the problem
becomes computable while preserving the essential dynamics of the
initial equation. On the other hand, we apply a Fourier-Galerkin
method preserving a finite number of modes $N>0$.

\subsubsection*{Truncated Landau equation and its variant formulation.}

Since the original Landau equation \eqref{1} is non-local, we first
introduce a truncated equation confined to a large box.  Let  $\psi^R\in C^\infty_0(\cB(0,R))$ be a radial 
bump function satisfying that
$$
\mathbf{1}_{\{|v|\le R/2\}}\le \psi^R\le
 \mathbf{1}_{\{|v|\le R\}},
 $$
 where $\mathbf{1}_\Omega$ denotes the characteristic function of a
 set $\Omega$. The truncated Landau equation is then defined by:
\ben\label{eq:trf}
\begin{cases}
  \ds\partial_t f^R\,=\, \cQ^R(f^R,f^R), \\[0.9em]
  \ds f^R|_{t=0}\,=\,f^R_0,
\end{cases}
\een
where $f^R_0:=f_0\psi^R$ while the   truncated collision operator $\cQ^R$ is given by
$$
\cQ^R(g,h)\,:=\,\cQ(g\,\psi^R,h\,\psi^R)\,\psi^R
.$$
 It is worth mentioning that we apply truncation both inside and outside the collision operator 
 $\cQ$. This choice has two important consequences. On the one hand,  by the standard ODE argument,
  the solution $f^R$ to \eqref{eq:trf} remains compactly supported in
  $\cB(0,R)$. On the other hand,  the formulation allows us to
  reinterpret the operator as a periodic Landau collision
  operator. This justifies and enables us to employee the Fourier
  spectral method used in the most literature. Furthermore, while the
  truncated Landau equation no longer preserves the conservation laws
  and entropy structure of the original equation, it retains two
  essential properties since solutions remain nonnegative and the dynamics coincide with the original equation inside the smaller ball
 $\cB({0,R/2})$. In other words, we can still capture  the original solution's behavior through the truncated equation if $R\gg1$.
  
We have following theorem which asserts the error estimate between \eqref{1} and \eqref{eq:trf}:

%%%%%%%%%%%%%%%%%%%%%%%%%%%%%%%%%%%%%%%%%%%%%%%%%%%%%%%%%%%%%%
%
% Theorem 1.
%
%%%%%%%%%%%%%%%%%%%%%%%%%%%%%%%%%%%%%%%%%%%%%%%%%%%%%%%%%%%%%%

\begin{thm}
  \label{thm:truncationresult} 
Under the Assumption \ref{TAES} on $f_0$ and parameters $n$, $k$, $l$, consider $f$ the solution to \eqref{1} and the global regularity bound $M$ given in Proposition
\ref{thm:fHnl}. Then, for any $T>0$, there exist two  constants
$\tilde\kappa := \kappa_{n,\mathsf{p}}(M,n,k,l) > 0$, $\tilde
C:=\tilde C(M,n,k,l) >0$ and a
nondecreasing function $\tilde\RR$ given by
\begin{equation}
  \label{def:Rtilde}
\tilde\RR(t) \,:=\, \tilde C \,\exp\left(\tilde\kappa\,t\right),
\end{equation}
such that for
all $R > \tilde\RR(T)$, the truncated
equation \eqref{eq:trf} admits a unique, nonnegative solution
$f^R$  satisfying 
$$
{\rm Supp}(f^R(t,\cdot))\subset\cB(0,R), \qquad  \forall  \,t \in [0,T].
$$
Furthermore,  we have the following error estimates
\begin{equation}
  \label{errorbetweenfrf}
\|f^R(t) - f(t)\|_{H^n_k} \,\leq\, \frac{\tilde{\CC}}{R^{l}}\, \exp\left(\tilde{\kappa}\, t\right) \,\leq\, 1, \qquad \forall\, t\, \in [0,T],
\end{equation}
where the constants $\tilde{\CC}$ and $\tilde{\kappa}$ depend only on $M$, $n$,
$k$, $l$.
\end{thm}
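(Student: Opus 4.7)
The plan is to first establish well-posedness of \eqref{eq:trf} via a contraction argument, then obtain the error bound by decomposing $\cQ(f,f)-\cQ^R(f^R,f^R)$ into a truncation error (small in $R$) and a bilinear Lipschitz difference, and finally close the argument by Gr\"onwall's inequality combined with a bootstrap on the bound $\|f^R\|_{H^{n+2}_{k+l}}$.

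First, I would observe that since $\psi^R$ has compact support, the convolutions $a*(\psi^R g)$ and $a*\nabla(\psi^R g)$ are bounded smooth matrix fields for any $g\in L^1_l$, so $\cQ^R$ is bilinear and locally Lipschitz on $H^{n+2}_{k+l}$, yielding a short-time solution by Picard iteration. The outer $\psi^R$ factor forces the right-hand side of \eqref{eq:trf} to vanish on $\{|v|>R\}$, so integrating the pointwise ODE in $v$ gives $\mathrm{Supp}(f^R(t,\cdot))\subset\cB(0,R)$; nonnegativity follows from the divergence/diffusion structure of $\cQ^R$ via a maximum principle applied to $(f^R)_-$. A continuation argument using the a priori bounds of Proposition \ref{thm:fHnl} (adapted to the truncated operator, whose tails are killed by $\psi^R$) extends the solution to $[0,T]$.

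For the error estimate, I set $g:=f-f^R$ and split
\beq
\partial_t g \,=\, \underbrace{\cQ(f,f)-\cQ^R(f,f)}_{\cE_1}\,+\,\underbrace{\cQ^R(f,f)-\cQ^R(f^R,f^R)}_{\cE_2}.
\eeq
Expanding $\cE_1$ by bilinearity of $\cQ$ produces a sum of terms each carrying a factor of the form $(1-\psi^R)f$ or $(1-\psi^R)\cQ(\cdot,\cdot)$, all supported on $\{|v|\geq R/2\}$; the weight-shifting bound
\beq
\|(1-\psi^R)\,f\|_{H^{n+2}_k}\,\lesssim\,R^{-l}\,\|f\|_{H^{n+2}_{k+l}}\,\leq\,M\,R^{-l}
\eeq
then transfers, through bilinear weighted Sobolev estimates for $\cQ$ with Coulomb kernel, into $\|\cE_1\|_{H^n_k}\lesssim M^2\,R^{-l}$. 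For $\cE_2$, bilinearity gives $\cE_2=\cQ(g\psi^R,f\psi^R)\psi^R+\cQ(f^R\psi^R,g\psi^R)\psi^R$, and the same bilinear estimates combined with the bootstrap bound $\|f^R\|_{H^{n+2}_{k+l}}\leq 2M$ yield $\|\cE_2\|_{H^n_k}\lesssim C_M\,\|g\|_{H^n_k}$. Together with the initial bound $\|g(0)\|_{H^n_k}=\|f_0(1-\psi^R)\|_{H^n_k}\lesssim R^{-l}\,\|f_0\|_{H^n_{k+l}}$, Gr\"onwall's lemma produces \eqref{errorbetweenfrf}, and the threshold $R>\tilde\RR(T)$ is chosen so that $\tilde\CC R^{-l}\exp(\tilde\kappa T)\leq 1$, which a posteriori legitimates both the bootstrap bound on $\|f^R\|_{H^{n+2}_{k+l}}$ and the assertion that the error is itself bounded by $1$ on $[0,T]$.

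The main obstacle will be the derivation of the bilinear weighted Sobolev estimates $\|\cQ(u,v)\|_{H^n_k}\lesssim \|u\|_{\star}\,\|v\|_{\star\star}$ in the Coulomb case $\gamma=-3$, where the kernel $a_{ij}(z)\sim|z|^{-1}$ forces a careful distribution of derivatives and weights between the two arguments, as well as the use of Hardy-type inequalities to absorb the singularity. This is precisely what dictates the loss of two derivatives ($n+2\to n$) and the loss of $l$ weights ($k+l\to k$) between the norm controlled in Proposition \ref{thm:fHnl} and the norm in which the error is measured in \eqref{errorbetweenfrf}.
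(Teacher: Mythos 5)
Your overall architecture (truncation error plus Lipschitz difference plus Gr\"onwall) is the natural first attempt, but it contains a gap that is precisely the point the paper's proof is organized around, and as written the Gr\"onwall step does not close.

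The problem is the claimed bound $\|\cE_2\|_{H^n_k}\ls C_M\|g\|_{H^n_k}$. Writing $\cE_2=\cQ(g\psi^R,f\psi^R)\psi^R+\cQ(f^R\psi^R,g\psi^R)\psi^R$, the first term is indeed controlled by $\|g\|_{H^n_k}\,\|f\|_{H^{n+2}_{k+l}}\ls M\|g\|_{H^n_k}$, because the available bilinear estimate (Corollary \ref{thm:Qgfvk}: $\|\cQ(u,v)\|_{L^2_k}\ls\|u\|_{L^2_k}\|v\|_{H^2_{k-1}}$) puts the two extra derivatives on the \emph{second} argument, here $f$, which you control in $H^{n+2}$. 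But the second term has $g$ in the second slot: the Landau operator is a second-order (degenerate elliptic) operator in that argument, so any norm bound of $\cQ(f^R\psi^R,g\psi^R)$ in $H^n_k$ costs $\|g\|_{H^{n+2}}$, which is exactly the quantity you are not controlling. A Lipschitz-in-$H^n_k$ estimate for $h\mapsto\cQ^R(\cdot,h)$ simply does not hold, and losing two derivatives at every Gr\"onwall step cannot be iterated. The paper avoids this by never estimating $\partial_t g$ in norm: it performs an energy estimate $\br{\cL^R_{f}g,g\wei^{2k}}$ and uses the \emph{sign} of the second-order part. After integration by parts the dangerous term becomes $-\int (a*(f\psi^R)):\nabla(g\psi^R)\otimes\nabla(g\psi^R)\wei^{2k}$, which is coercive (Lemma \ref{thm:Lhhv2k}, resting on Theorem \ref{coe}) and produces the negative $-K\|g\psi^R\wei^k\|_{\H1A}^2$ that also absorbs the genuinely quadratic term $\br{\cQ^R(g,g),g\wei^{2k}}$ under the bootstrap $\|g\|_{L^2_k}\le K/(4C_0)$. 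The induction on the number of derivatives (the terms $\cJ_1,\dots,\cJ_5$) then repeats this structure at each order, with the commutator estimates of Lemma \ref{thm:commgfhm2} handling the weights. Without some version of this coercivity your scheme stalls at the very first application of Gr\"onwall.

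A second, smaller gap: nonnegativity does not follow from a maximum principle applied to $(f^R)_-$ in isolation, because the quadratic form $\int(a*(f^R\psi^R)):\nabla(f^R\psi^R)\otimes\nabla(f^R\psi^R)\,\mathbf{1}_{\{f^R<0\}}$ has a favorable sign only if the matrix $a*(f^R\psi^R)$ is positive semidefinite, and $f^R$ is not yet known to be nonnegative. The paper resolves this circularity by first proving the quantitative error estimate, then writing $a*(f^R\psi^R)=a*f+a*(f^R\psi^R-f)$, using the uniform lower bound $a*f\ge c_0\wei^{-3}$ from \cite{HJL} and showing the perturbation is $O(R^{-l})$ on $\cB(0,R)$, so that for $R$ large the diffusion matrix stays positive definite where it matters. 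So nonnegativity is a consequence of the error estimate for large $R$, not an independent structural fact. Your bootstrap hypothesis $\|f^R\|_{H^{n+2}_{k+l}}\le 2M$ faces the same derivative-loss obstruction as the Lipschitz bound and is also not what the paper propagates (it propagates $\|f^R-f\|_{H^n_k}$, not $H^{n+2}_{k+l}$ regularity of $f^R$).
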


As a direct consequence,  using the fact that $f^R$ is compactly
supported in $\cB(0,R)$, we may reformulate the truncated equation
\eqref{eq:trf}  as a  equation with so-called {\it periodic collision
  operator}.  Indeed, consider the torus $\cD_L=[-L,L]^3$  with $L\,=\,2R$ and
let $g$, $h\in H^1_{\#}(\cD_L)$, where $ H^1_{\#}(\cD_L)$ denotes the
$H^1$ Sobolev space  in the periodic domain $\cD_L$, the periodic Landau operator is defined as
\ben\label{defQp}
\cQ_\#(g,h)(v) \,:=\,\na\cdot\int_{[-L,L]^3}a^L(v-v_*)\left(g(v_*)\na h(v)\,-\,\na g(v_*)h(v)\right)\,\dD v_*,
\een
where $a^L(z)\,:=\,a|_{[-L,L]^3}(z)\,\mathbf{1}_{\{|z|<L\}}(z)$.
 Moreover, setting $\br{\cdot,\cdot}_\#$  the inner product for the
$L^2$ functions in the torus,   for each function $g$, $h$ and  $\varphi\in H^1_{\#}(\cD_L)$,   it holds that
  $$
 \br{\cQ_\#(g,h),\,\varphi}_{\#}\,=\,-\iint_{[-L,L]^6} a^L(v-v_*)\,\left(g(v_*)\na h(v)\,-\,\na g(v_*)h(v)\right)\,\na \varphi(v)\,\dD v_*\dD v.
 $$

As a consequence considering  $g$ and $h$ two functions compactly
supported in $\cB(0,L/2)$, we define two periodic function
$g_\#:=g\,\mathbf{1}_{\mathcal{D}_L}$ and
$h_\#:=h\,\mathbf{1}_{\mathcal{D}_L}$ in the torus
$\mathcal{D}_L$.  Hence  we further  have  that
$$
\cQ_\#(g_\#,h_\#)(v) \,=\,\cQ(g,h)(v), \qquad \forall\,v
\in\mathcal{D}_L.
$$
From these, considering a solution  $f^R$  to the truncated Landau
equation \eqref{eq:trf},   we set
$f^R_\#(t,v)\,:=\,f^R(t,v)\mathbf{1}_{\{v\in\mathcal{D}_L\}}(v)$ with
$R=L/2$, hence we rewrite \eqref{eq:trf} as follows:
 \ben\label{eq:trfp}
\begin{cases}
  \ds\partial_t  f^R_\#\,=\,\cQ_\#(f^R_\#\,\psi^R,\,f^R_\#\,\psi^R)\,\psi^R,
  \\[0.9em]
  \ds f^R_\#|_{t=0}\,=\,f^R_0\,\mathbf{1}_{\{v\in\mathcal{D}_L\}}.
\end{cases}
\een

\begin{rmk}
  The key difference between  $f^R_\#$ and $f^R$ lies in their domains
  of definition since $f^R_\#$ is defined on the bounded domain with
  periodic boundary conditions, whereas $f^R$ remains defined on the
  entire space $\R^3$. However, these functions are identical on $\cD_L$.
\end{rmk}

This approach successfully reduces the original problem in $\R^3$ to the periodic domain through equation \eqref{eq:trf}, while maintaining rigorous error control. This represents a significant improvement over conventional spectral methods, as evidenced by our novel error estimates.

\subsubsection*{Fourier spectral method.}
To give a numerical algorithm to solve \eqref{eq:trfp}, we apply a
Fourier-Galerkin method. To this aim we introduce 
$$
\mP_N\,:=\,{\rm Span}\left\{v\mapsto e^{-i\f{\pi}{L}k\cdot v}\  ;\ k\in\cJ_N\right\}
\,\subset \,L^2(\cD_L),
$$
with $\cJ_N:=\llbracket -N, N-1 \rrbracket^3$ and  $\cP_N$  the 
orthogonal projection from $L^2(\cD_L)$  onto the subspace $\mP_N$. More precisely, for any periodic function $f_\#\in L^2(\cD_L)$, we have  
\ben\label{defcPN}
\cP_N f_\#(v)\,=\,\frac{1}{(2L)^{3}}\,\sum_{k\in\cJ_N}\hat{f}_\#(k)\,\exp\left(i\f{\pi}{L}k\cdot v\right),
\een
where
$$
\hat{f}_\#(k):=\int_{\cD_L}f_\#(v)\,\exp\left(-i\f{\pi}{L}k\cdot v\right)\dD v.
$$
The truncated system takes the form:
\ben\label{eq:fRn}
\begin{cases}
  \ds\partial_t f_\#^{R,N}\,=\,\cQ^R_N\left(f_\#^{R,N},\,f^{R,N}_\#\right),
  \\[0.9em]
 \ds f_\#^{R,N}|_{t=0}\,=\,\cP_N\left(f^R_0\,\mathbf{1}_{\mathcal{D}_L}\right),
\end{cases}
\een
where $R:=L/2$, while the {\it truncated periodic collision operator in Fourier mode} $\cQ_N^R$ is defined through 
\ben
\label{defQNR}
\cQ_N^R(g,h)\,:=\,\cP_N\left(\cP_N\left(\cQ_\#\left(G_N,H_N \right)\right)\, \psi^R\right),
\een
with $G_N:=\cP_N\left(g\,\psi^R\right)$ and
$H_N:=\cP_N\left(h\,\psi^R\right)$.

It is worth mentioning that the operator $\cQ_N^R$ involves two key projections. On the one hand,
the first projection $\cP_N$  restricts the solution   $f_\#^{R,N}$ to
\eqref{eq:fRn}  to a finite number of Fourier modes. On the other
hand, the second projection $\cP_N$  ensures    $\cQ_N^R$ satisfies a   coercivity  estimate, which is crucial for later error analysis.

  Unlike the original truncated equation  \eqref{eq:trfp}, the solution $f^{R,N}_\#$ is constructed via Fourier series. As a result, we no longer retain guaranteed control over the nonnegativity of the solution.   Now we give an error estimate between  $f^{R,N}_\#$ and $f^R_\#$.

%%%%%%%%%%%%%%%%%%%%%%%%%%%%%%%%%%%%%%%%%%%%%%%%%%%%%%%%%%%%%%
%
% Theorem 2.
%
%%%%%%%%%%%%%%%%%%%%%%%%%%%%%%%%%%%%%%%%%%%%%%%%%%%%%%%%%%%%%%

  \begin{thm}
    \label{thm:fourierresult}
 Under the Assumption \ref{TAES} on $f_0$ and parameters $n$, $k$,
 $l$.  For
 any $T>0$, there exist  positive constants $\CC$, $C_1$, and $\kappa$ depending only on
 $n$, $k$, $l$ and $M$ (the global regularity bound  given in Proposition
\ref{thm:fHnl})  such that if the truncation radius $R>0$ and the
truncation parameter $N>0$ are sufficiently large such that
 $$
 \begin{cases}
   \ds R \,>\, \tilde\RR(T),
   \\
   \ds \frac{N}{R} \,>\, \tilde\NN(T,R) \,:=\, C_1\,\exp\left(\frac{\kappa\,
       R^{1/2}\,T}{n-4}\right),
\end{cases}
$$
where $\tilde\RR$ is given in Theorem \ref{thm:truncationresult}.  Then, the numerical scheme \eqref{eq:fRn} admits a unique solution
$f_\#^{R,N} \in L^\infty([0,T]; L^2(\cD_L))$ and   we have  for all $t \in [0,T]$,
\begin{equation}\label{error2}
\|f^{R,N}_\#(t) - f_\#^R(t)\|_{L^2(\cD_L)} \,\leq\, \CC \,\left(\frac{R}{N}\right)^{n-2} \,\exp\left(\kappa\, R^{1/2}\,T\right) \,\leq\, 1,
\end{equation}
 where $f^R_\#$ is the solution to \eqref{eq:trfp}.
\end{thm}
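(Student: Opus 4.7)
The plan is a standard Galerkin-type energy estimate in $L^2(\cD_L)$, splitting the total error into a Fourier projection error plus a Galerkin remainder that lives in $\mP_N$, and closing an inequality for the latter by combining the coercivity built into $\cQ_N^R$ (through the inner projection $\cP_N$) with a consistency estimate for the Landau bilinear form. I would first write
\[
f_\#^{R,N}(t) - f_\#^R(t) \;=\; \varepsilon_N(t) \;+\; \eta_N(t), \qquad \varepsilon_N := f_\#^{R,N} - \cP_N f_\#^R \in \mP_N,\quad \eta_N := (\cP_N-I) f_\#^R.
\]
Since Proposition~\ref{thm:fHnl} combined with Theorem~\ref{thm:truncationresult} yields $f_\#^R \in L^\infty([0,T]; H^n(\cD_L))$ uniformly in $R$, the standard spectral approximation bound on the torus gives $\|\eta_N(t)\|_{L^2(\cD_L)} \lesssim (L/N)^n \|f_\#^R(t)\|_{H^n(\cD_L)}$, which is already comfortably within the target rate, so all the real work concerns $\varepsilon_N$.

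Writing the evolution equation for $\varepsilon_N$ and using the bilinearity of $\cQ_N^R$, I would telescope the right-hand side as
\[
\partial_t \varepsilon_N \;=\; \bigl[\cQ_N^R(f_\#^{R,N},f_\#^{R,N}) - \cQ_N^R(\cP_N f_\#^R, \cP_N f_\#^R)\bigr] \;+\; \bigl[\cQ_N^R(\cP_N f_\#^R, \cP_N f_\#^R) - \cP_N(\cQ_\#(f_\#^R\psi^R, f_\#^R\psi^R)\psi^R)\bigr],
\]
that is, a Lipschitz/coercive bracket and a consistency bracket. Testing the Lipschitz bracket against $\varepsilon_N$ and using the weak form \eqref{defQp}, the inner $\cP_N$ in definition \eqref{defQNR} is precisely what identifies $\langle \cQ_N^R(\cdot,\cdot), \varepsilon_N\rangle_{\#}$ with an expression controllable by the diffusion matrix $a^L\ast(\cdot)$, yielding terms linear in $\varepsilon_N$ that a Grönwall argument can absorb, together with lower order $\eta_N$-contributions. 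For the consistency bracket, unpacking the definition of $\cQ_N^R$ reduces matters to spectral errors of the form $(I-\cP_N)(f_\#^R\psi^R)$ inserted into the bilinear arguments and, most crucially, $(I-\cP_N)\cQ_\#(f_\#^R\psi^R, f_\#^R\psi^R)$ appearing on the outside; since $\cQ_\#$ is a second-order operator in $v$, its output lies only in $H^{n-2}(\cD_L)$ when $f_\#^R \in H^n(\cD_L)$, which is exactly what forces the rate $(R/N)^{n-2}$ rather than $(R/N)^n$ in the final bound.

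The main obstacle is tracking the $R$-dependence of the bilinear estimates for $\cQ_\#$ with the Coulomb kernel $a^L$ truncated to $\cB(0,L)$: the singular nature of $a_{ij}$ at the origin, combined with Sobolev embeddings and interpolation on $\cD_L = [-L,L]^3$ whose scaling depends on $L = 2R$, is what produces the factor $R^{1/2}$ that appears inside the exponential in \eqref{error2}. Assembling these pieces yields a differential inequality schematically of the form
\[
\tfrac{1}{2}\tfrac{d}{dt}\|\varepsilon_N(t)\|_{L^2(\cD_L)}^2 \;\leq\; C\, R^{1/2}\, \|\varepsilon_N(t)\|_{L^2(\cD_L)}^2 \;+\; C\, (R/N)^{2(n-2)},
\]
and Grönwall's lemma together with the bound on $\eta_N$ delivers \eqref{error2}. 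The hypothesis $N/R > \tilde\NN(T,R)$ is precisely what forces the error to stay below $1$ throughout $[0,T]$, and the restriction $n\geq 5$ in Assumption~\ref{TAES} ensures the denominator $n-4$ appearing in $\tilde\NN$ is positive; this $n-4$ is where I expect the most delicate bookkeeping, arising from an interpolation inequality invoked to close the nonlinear terms against the coercive dissipation.
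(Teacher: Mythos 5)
Your overall architecture (an $L^2(\cD_L)$ energy estimate, a stability--plus--consistency splitting, Gr\"{o}nwall) matches the paper's, and your identification of the rate $(R/N)^{n-2}$ with the two derivatives lost in $\cQ_\#$ is correct. But your closing differential inequality omits the term that actually dictates the hypothesis on $N/R$. Because the equation is quadratic, the stability bracket contains $\cQ_\#(G_N,G_N)$ tested against the error, with $G_N=\cP_N(g_\#\psi^R)\in\mP_N$; the only available bound here (the second estimate of Lemma \ref{thm:Qpghf}) is $\|\cQ_\#(G_N,G_N)\|_{L^2}\lesssim N^2L^{-3/2}\|G_N\|_{L^2}^2$, so the energy identity produces a cubic contribution $\tfrac{N^2}{R^{3/2}}\|g_\#\|_{L^2}^3$ carrying an $N^2$ prefactor, which cannot be absorbed a priori into $CR^{1/2}\|g_\#\|_{L^2}^2$. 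The paper closes this by a continuity/bootstrap argument: it defines $T_*$ as the largest time on which $\tfrac{N^2}{R^{3/2}}\|g_\#\|_{L^2}\le R^{1/2}$, applies Gr\"{o}nwall on $[0,T_*]$, and then forces $T_*=T$ by requiring $\CC(R/N)^{2n-4}e^{2\kappa R^{1/2}T}\le(R/N)^4$. Solving that inequality for $N/R$ is precisely what yields the exponent $1/(n-4)$ in $\tilde\NN(T,R)$; it is not an interpolation inequality, as you conjectured, and your schematic ODE, lacking the cubic term, cannot produce it.

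A second point you gloss over: the coercivity of the leading stability term is not a consequence of the projection structure alone. After integration by parts the dangerous term is $-\int_{\cD_L}\bigl(a^L*(f^R_\#\psi^R)\bigr):\nabla G_N\otimes\nabla G_N\,\dD v$; since $G_N\in\mP_N$, estimating $\nabla G_N$ brutally would cost a factor $N/L$ and destroy the Gr\"{o}nwall rate. The paper discards this term by sign: $a$ is positive semidefinite and $f^R_\#\ge0$, the latter being exactly the nonnegativity assertion of Theorem \ref{thm:truncationresult}. Your argument must invoke that nonnegativity explicitly; ``controllable by the diffusion matrix'' is not sufficient. The remaining lower-order piece of that bracket is bounded using $\|a^L\|_{L^2}\sim L^{1/2}$, which is the actual source of the $R^{1/2}$ inside the exponential (your attribution of it to Sobolev embedding and domain scaling is off, though harmless).
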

 
It is worth mentioning here that the  error estimate \eqref{error2} is
only valid provided that the number $N$ satisfies that $N>L$, where
$L$ is the size of the computational domain. Gathering the previous
results, we are now ready to state our main result providing an error
estimate between the numerical approximation given by a
Fourier-Galerkin method on a truncated velocity domain and the the
exact solution to the Landau equation. We first define a numerical
approximation $f^{R,N}$  on $\R^3$ as
\beno
f^{R,N}(t,v)\,:=\,
\left\{\begin{array}{ll}
      \ds   f^{R,N}_\#(t,v), & {\rm if }\, v\in \cD_L,\\[0.9em]
         \ds 0, &  {\rm if }\, v\in \R^3\setminus \cD_L,
\end{array}\right.
\eeno
where $f^{R,N}_\#$ is given by \eqref{eq:fRn}. Then we prove the
following result.

%%%%%%%%%%%%%%%%%%%%%%%%%%%%%%%%%%%%%%%%%%%%%%%%%%%%%%%%%%%%%%
%
% Theorem 3.
%
%%%%%%%%%%%%%%%%%%%%%%%%%%%%%%%%%%%%%%%%%%%%%%%%%%%%%%%%%%%%%%

\begin{thm}
  \label{thm:mainresult}
  Under the Assumption \ref{TAES} on $f_0$ and parameters $n$, $k$,
 $l$. For any $T>0$, let us choose  $L>0$ be the truncation of the
 velocity domain  and $N$ be the number of  Fourier modes such that 
\beno
L\,\ge\, 2\, \tilde\RR(T) \quad\mbox{and}\quad
\frac{N}{L}\,\geq\, \tilde\NN(T,L/2),
\eeno
where $\tilde\RR$ and $\tilde\NN$ are defined in Theorems \ref{thm:truncationresult}  and \ref{thm:fourierresult}  while the
constants $ C_1$ and $\kappa$ depend only on $M$, $n$, $l$ and
$k$. Then, there exist constants   $\CC$ and $ \kappa_0$ depending
only on $M$, $n$, $k$ and $l$ such that for each $t\,\in\, [0,T]$, we have
\ben
\label{errorEstimate}
\|f^{R,N}(t)-f(t)\|_{L^2(\R^3)}\,\le\, \CC\,\left[ \f{\exp\left(\kappa_0 t\right)}{L^l}\,+\, \left(\f{L}{N}\right)^{n-2}\,\exp\left(\kappa L^{1/2}\, t\right)\right].
\een  
\end{thm}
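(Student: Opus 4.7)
The plan is a straightforward triangle inequality splitting the total error into the truncation error controlled by Theorem \ref{thm:truncationresult} and the Fourier spectral error controlled by Theorem \ref{thm:fourierresult}, once we have correctly matched the domains of definition of the various functions involved.

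First, since $L \ge 2\,\tilde\RR(T)$ and $R = L/2$, the hypothesis $R > \tilde\RR(T)$ of Theorem \ref{thm:truncationresult} is satisfied, so the truncated solution $f^R$ exists, is nonnegative, and its support lies in $\cB(0,R) \subset \cD_L$. In particular, $f^R$ vanishes on $\R^3 \setminus \cD_L$, so by definition of $f^{R,N}$ we may write, pointwise,
\[
f^{R,N}(t,v) - f^R(t,v) \,=\, \bigl(f^{R,N}_\#(t,v) - f^R_\#(t,v)\bigr)\,\mathbf{1}_{\cD_L}(v) \,-\, f^R(t,v)\,\mathbf{1}_{\R^3 \setminus \cD_L}(v),
\]
and since the second term is identically zero, the $L^2(\R^3)$ norm of $f^{R,N} - f^R$ equals the $L^2(\cD_L)$ norm of $f^{R,N}_\# - f^R_\#$.

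Next I would apply the triangle inequality
\[
\|f^{R,N}(t) - f(t)\|_{L^2(\R^3)} \,\le\, \|f^{R,N}(t) - f^R(t)\|_{L^2(\R^3)} \,+\, \|f^R(t) - f(t)\|_{L^2(\R^3)}.
\]
For the first term, the parameter assumption $N/L \ge \tilde\NN(T,L/2)$ is precisely the hypothesis $N/R > \tilde\NN(T,R)$ of Theorem \ref{thm:fourierresult} with $R = L/2$. Combining with the identification made above, this term is bounded by $\CC\,(R/N)^{n-2}\,\exp(\kappa R^{1/2}\,t)$, which in turn is bounded by $\CC\,(L/N)^{n-2}\,\exp(\kappa L^{1/2}\,t)$ after absorbing the $2^{n-2}$ into $\CC$. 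For the second term, Theorem \ref{thm:truncationresult} gives an $H^n_k$ bound; since $n,k \ge 0$, the trivial embedding $H^n_k \hookrightarrow L^2(\R^3)$ yields
\[
\|f^R(t) - f(t)\|_{L^2(\R^3)} \,\le\, \|f^R(t) - f(t)\|_{H^n_k} \,\le\, \frac{\tilde\CC}{R^l}\,\exp(\tilde\kappa\,t),
\]
and the right-hand side is bounded by $\CC\,L^{-l}\,\exp(\kappa_0 t)$ after absorbing the factor $2^l$ and renaming constants.

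Summing the two contributions and taking $\kappa_0 \ge \tilde\kappa$ as well as $\kappa \ge \kappa_{n,\mathsf p}$ as needed gives the estimate \eqref{errorEstimate}. I do not foresee any real obstacle here: the work has already been done in Theorems \ref{thm:truncationresult} and \ref{thm:fourierresult}, and the only conceptual point is the bookkeeping needed to extend the spectral error bound, which is stated in $L^2(\cD_L)$, to an $L^2(\R^3)$ bound. That extension is essentially costless because the relation $R = L/2$ guarantees that $\mathrm{Supp}(f^R(t,\cdot)) \subset \cB(0,R) \subset \cD_L$, so no truncation-at-infinity error is created when transferring to $\R^3$.
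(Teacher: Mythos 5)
Your proposal is correct and follows essentially the same route as the paper: a triangle inequality splitting into the truncation error (Theorem \ref{thm:truncationresult}) and the spectral error (Theorem \ref{thm:fourierresult}), using the compact support of $f^R$ in $\cD_L$ to identify $\|f^{R}-f^{R,N}\|_{L^2(\R^3)}$ with $\|f^{R}_\#-f^{R,N}_\#\|_{L^2(\cD_L)}$. You merely make explicit a few bookkeeping steps (the embedding $H^n_k\hookrightarrow L^2$ and the absorption of powers of $2$ coming from $R=L/2$) that the paper leaves implicit.
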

\begin{proof}
Using the triangular inequality and thanks to the conditions imposed
on $L$ and $N$, we know that  $f^R$ is compactly supported in $\cD_L$
and  coincides with $f^R_\#$ in $\cD_L$, hence we have
\begin{eqnarray*}
  \|f(t)  - f^{R,N}(t)\|_{L^2(\R^3)} &\le&
                                        \|f(t)-f^R(t)\|_{L^2(\R^3)}\,+\,\|f^{R}(t)-f^{R,N}(t)\|_{L^2(\R^3)}
  \\
                                  &=&
                                      \|f(t)-f^R(t)\|_{L^2(\R^3)}\,+\,\|f^{R}_\#(t)-f^{R,N}(t)\|_{L^2(\cD_L)}.
\end{eqnarray*} 
Then the desired result follows by applying Theorems \ref{thm:truncationresult} and  \ref{thm:fourierresult}.
  \end{proof}

  The main purpose of the next sections is on the one hand to prove error estimates
  between the solution $f$ of the Landau equation \eqref{1}  and 
  the solution of the truncated problem $f^R$, given by \eqref{eq:trf},
  corresponding to  Theorem \ref{thm:truncationresult}. On the other
  hand, we show error estimates (Theorem \ref{thm:fourierresult}) between the distribution $f^R$  and
  the approximation $f_\#^{R,N} $ given by the Fourier-Galerkin
  approach \eqref{eq:fRn}.

In the following section, we first recall, we present fundamental
results on the collision operator and establishes key analytical
properties of the equation. Then in Section \ref{sec:3}, we
investigate the truncated system (Theorem \ref{thm:truncationresult}),
proving local existence and uniqueness, obtained via standard energy
estimates,  propagation of regularity and  preservation of
nonnegativity ensuring that solutions are  physically
meaningful. Finally, in Section \ref{sec:4}, we focus on the
periodic collision operator and prove Theorem \ref{thm:fourierresult}, 
which guarantees the stability of the numerical method. 
In Section \ref{sec:numerical}, we present numerical simulations based
on the theoretical framework developed in the previous sections, 
demonstrating the spectral accuracy for Maxwellian
moldecules. Finally, we performed numerical simulations for Coulomb
potential and investigate the trend to equilibrium.

\section{Analytic results on the Landau equation}
\label{sec:2}
\setcounter{equation}{0}
\setcounter{figure}{0}
\setcounter{table}{0}

In the following we write $a \lesssim b$ or $a \gtrsim b$ if there exists a uniform
constant $C > 0$ such that $a \leq C\,b$ or $a \geq C\,b$. The notation
$a \sim b$ means both $a \lesssim b$ and $b \lesssim a$
hold. Moreover, constants $C_{a_1,\ldots,a_n}$ or $C(a_1,\ldots,a_n)$ denotes a constant depending on parameters
$a_1$,$\ldots$, $a_n$.

In this section, we present essential analytic results for equation \eqref{1}, including:
variational bounds (both lower and upper) for the collision operator,
commutator estimates  and
a proof of the uniform-in-time propagation of Sobolev regularity for solutions to \eqref{1}.

\subsection{Preliminary results}
\label{sec21}
We   present elementary results about the collision operator  which   will be used frequently in the later. 
\begin{prop}
  \label{thm:comm} 
Let $f$, $g$, $h$  be smooth functions and $m$ be a weight function  $m>0$. Then it holds that
\begin{eqnarray}
  \label{eq:gfhm2}
  \br{\cQ(g,f),\,h \,m^2}\,-\,\br{\cQ(g,fm),\,h\,m}
  &=&-2\,\int_{\R^3}(a*g):\left(\na f\otimes \na m\right)\, h\,m\, \dD v
  \\
  && -\int_{\R^3} (a*g):(\na^2m)\, f\,h\,m\, \dD v,
\nonumber
\end{eqnarray}
and
\begin{eqnarray}
  \label{eq:gffm2}
  &&\ds\br{\cQ(g,f),f\,m^2}-\br{\cQ(g,fm),fm} \,=\\[0.9em]
  &&\nonumber\ds\int \left[\,(a*g):\left(\frac{\nabla m}{m}\otimes \frac{\nabla
     m}{m}\right) \,+\,(b*g)\cdot \frac{\nabla m}{m}\,\right]\,f^2\,m^2\,\dD v\,,
\end{eqnarray}
where $b_i(z)\,:=\,\partial_j a_{i j}(z)$.
\end{prop}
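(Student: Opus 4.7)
The plan is to reduce both identities to direct applications of the weak formulation of $\cQ$, followed by careful bookkeeping of the extra terms produced by the weight $m$. First, integrating by parts in the convolution variable gives the pointwise identity $a*\nabla g=b*g$, and consequently $\nabla\cdot(a*g)=b*g$, which puts the Landau operator in the divergence form
\[
\cQ(g,\psi)\,=\,\nabla\cdot\bigl((a*g)\,\nabla \psi\bigr)\,-\,\nabla\cdot\bigl((b*g)\,\psi\bigr).
\]
The corresponding weak formulation against a test function $\psi_2$ reads
\[
\br{\cQ(g,\psi_1),\psi_2}\,=\,-\int_{\R^3}(a*g)\nabla \psi_1\cdot\nabla \psi_2\,\dD v\,+\,\int_{\R^3}(b*g)\cdot\nabla \psi_2\,\psi_1\,\dD v.
\]

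To prove \eqref{eq:gfhm2}, I would apply this formulation twice: first with $(\psi_1,\psi_2)=(f,hm^2)$, using $\nabla(hm^2)=m^2\nabla h+2hm\nabla m$; and then with $(\psi_1,\psi_2)=(fm,hm)$, using $\nabla(fm)=m\nabla f+f\nabla m$ and $\nabla(hm)=m\nabla h+h\nabla m$. Subtracting the two expressions, the leading contributions proportional to $(a*g)\nabla f\cdot\nabla h\,m^2$ and to $(b*g)\cdot\nabla h\,fm^2$ cancel identically, leaving a sum of four cross terms each carrying at least one factor $\nabla m$. A single integration by parts applied to $\int(a*g)\nabla m\cdot\nabla h\,fm\,\dD v$ (moving $\nabla$ off the factor $h$), combined with the algebraic identity $\nabla\cdot\bigl((a*g)\nabla m\bigr)=(b*g)\cdot\nabla m+(a*g):\nabla^2 m$, causes three of the four cross terms to cancel pairwise, and what remains is precisely the right-hand side of \eqref{eq:gfhm2}.

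Identity \eqref{eq:gffm2} is then obtained by specializing \eqref{eq:gfhm2} to $h=f$. The first term on the right-hand side becomes $-\int(a*g)\nabla m\cdot\nabla(f^2)\,m\,\dD v$, and a further integration by parts, again invoking $\nabla\cdot(a*g)=b*g$, rewrites it as
\[
\int f^2\,(a*g)\nabla m\cdot\nabla m\,\dD v\,+\,\int f^2 m\,(b*g)\cdot\nabla m\,\dD v\,+\,\int f^2 m\,(a*g):\nabla^2 m\,\dD v.
\]
The last contribution cancels the remaining term $-\int(a*g):\nabla^2 m\,f^2 m\,\dD v$ from \eqref{eq:gfhm2} at $h=f$, while factoring $\nabla m=m(\nabla m/m)$ in the first two yields exactly \eqref{eq:gffm2}. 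The only real obstacle in the argument is combinatorial, namely keeping track of signs and cross terms generated by the Leibniz rule; no analytic subtlety arises since the functions are assumed smooth and the implicit decay at infinity kills all boundary contributions when integrating by parts.
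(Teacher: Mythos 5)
Your proof is correct: I checked the bookkeeping and both identities come out as claimed. The route is genuinely different in its starting point from the paper's. The paper first establishes the pointwise commutator expansion
\begin{equation*}
\cQ(g,f\,m)\,=\,\cQ(g,f)\,m\,+\,2\,(a*g):\left(\nabla f\otimes\nabla m\right)\,+\,(a*g):(\nabla^2 m)\,f,
\end{equation*}
which yields \eqref{eq:gfhm2} immediately upon pairing with $h\,m$, with no integration by parts at all; you instead expand both inner products through the weak (divergence) formulation, cancel the top-order terms, and recover \eqref{eq:gfhm2} only after one integration by parts on the cross term $\int(a*g):(\nabla m\otimes\nabla h)\,f\,m\,\dD v$ together with the identity $\nabla\cdot\left((a*g)\nabla m\right)=(b*g)\cdot\nabla m+(a*g):\nabla^2 m$. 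For \eqref{eq:gffm2} the paper keeps $h$ general, integrates by parts on $\cI_1$, and observes that the leftover term carries the antisymmetric factor $f\nabla h-h\nabla f$, which vanishes at $h=f$; you specialize to $h=f$ first and then integrate by parts once more. The two organizations are equivalent in content. The paper's pointwise identity is more economical for \eqref{eq:gfhm2} (and isolates the commutator structure that is reused in the commutator estimates of Lemma \ref{thm:commgfhm2}), while your weak-formulation route makes the cancellation of the leading $(a*g)\nabla f\cdot\nabla h\,m^2$ and $(b*g)\cdot\nabla h\,f m^2$ contributions transparent and avoids passing through the non-divergence form of $\cQ$. One cosmetic remark: your phrase ``three of the four cross terms cancel pairwise'' is slightly loose — after the integration by parts, two of the four cross terms are cancelled outright and a third reproduces a copy of the first, which is what produces the factor $2$ in \eqref{eq:gfhm2} — but the computation behind it is right.
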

\begin{proof}
The identity follows from the expansion:
 \beq
\cQ(g,f\,m)  
\,=\,\cQ(g,f)\,m\,+\,
2\,(a*g):\left(\nabla f\otimes \nabla m\right)\,+\,(a*g):(\nabla^2m)\,f.
\eeq
Hence, direct computations yield that
\beq
  \br{\cQ(g,f),\,h\,m^2}-\br{\cQ(g,f\,m),\,h\,m} \,=\; \br{\cQ(g,f)\,m\,-\,\cQ(g,fm),\,h\,m}
                                                 \,=\, 2 \, \cI_1 \,-\,
                                                     \cI_2,
\label{c:0}
                                                     \eeq
with
$$
\begin{cases}
 \ds\cI_1 \,:=\,
                     -\int(a*g):(\na f\otimes \na m)\, h\,m\, \dD v\,
                     \\[0.9em]
                     \ds\cI_2 \,:= \,\int (a*g):(\na^2 m)\, f\,h\,m\, \dD
                     v,
                   \end{cases}
 $$
                   which establishes the first identity \eqref{eq:gfhm2}.
                   \\
                   Furthermore, through integration by parts on
                   $\cI_1$ and using the definition of $b$, we get that
\begin{eqnarray*}
\cI_1 &=&\int \na\cdot\left( \left(a*g\right)\,\nabla m\, h\,m\right)\,
f\, \dD v
\\
&=& \int (a*g):\left(\na^2m\right)\, f \,h \,m\, \dD v \,+\,\int b*g\cdot \na m\,
f\,h\,m\, \dD v
\\
&+&\int (a*g):\left(\na m\otimes \left( m\na h+h\na m\right)\,\right)\, f\, \dD v
\\
&=& \cI_2 \,+\,\int \left[\, (a*g) :\left(\f{\na m}{m}\otimes \f{\na m}{m}\right)\,+\,b*g\cdot \f{\na
    m}{m}\right]\, f\,h\,m^2\, \dD v
\\
&+&\,\int (a*g):\left(\na m\otimes \na h\right)\, f\,m\, \dD v\,.
\end{eqnarray*}
Substituting this latter expression in \eqref{c:0}, it follows that,
\begin{eqnarray*}
 \br{Q(g,f),hm^2}\,-\,\br{Q(g,fm),hm} &=&  \int\left[ (a*g): \left(\frac{\nabla m}{m}\otimes
   \frac{\nabla m}{m}\right) \,+\, b*g \cdot \frac{\nabla
     m}{m}\,\right] fhm^2\,\dD v\\[0.9em]
 &+&\int \left[  (a*g):\left(\frac{\nabla m}{m}\otimes \rr{f\na h-h\na
       f}\right)\right]\,m^2\,\dD v.
 \end{eqnarray*}
Setting $f = h$ causes the last term to vanish, yielding \eqref{eq:gffm2}.
 \end{proof}

 \begin{prop}
   \label{thm:pizpq}
For each $p$, $q$ and $z\in\R^3$,  it holds that
   \ben
\label{Pizpq}
\begin{cases}
\ds |z|^2 \,\Pi(z) :\left( p \otimes q\right)  \,=\, (z \times p) \cdot (z \times q),
\\[0.9em]
\ds |\Pi(z)\,p| \,=\, \frac{|z \times p|}{|z|}.
\end{cases}
\een
Moreover, for each $v$, $v_*\in \R^3$, we have
\ben
\label{eq:avv}
\begin{cases}
\ds  a(v-v_*):\left(v\otimes v\right) \,=\,\frac{|v\times v_*|^2}{|v-v_*|^{3}},
  \\[0.9em]
\ds |a(v-v_*)\,v|\,=\,\frac{|v\times v_*|}{|v-v_*|^{2}}.
\end{cases}
\een
\end{prop}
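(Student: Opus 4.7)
The statement consists of two kinematic identities for $\Pi(z) = I - z\otimes z/|z|^2$ followed by two direct consequences for the Coulomb kernel $a(z) = |z|^{-1}\Pi(z)$ (since $\gamma=-3$). My plan is to prove the identities on $\Pi(z)$ by reducing everything to scalar products, then specialize $z=v-v_*$ and use the observation $(v-v_*)\times v = v\times v_*$.

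The plan is to first expand, from the definition of $\Pi_{ij}(z)$ in \eqref{eq:aij},
\[
\Pi(z):(p\otimes q) \,=\, \sum_{i,j}\Pi_{ij}(z)\,p_i\,q_j \,=\, p\cdot q \,-\, \frac{(z\cdot p)(z\cdot q)}{|z|^2},
\]
so that $|z|^2\,\Pi(z):(p\otimes q) = |z|^2(p\cdot q) - (z\cdot p)(z\cdot q)$. I would then invoke the Binet--Cauchy (BAC--CAB type) identity
\[
(z\times p)\cdot(z\times q) \,=\, (z\cdot z)(p\cdot q)\,-\,(z\cdot q)(p\cdot z),
\]
which is immediate from $a\times(b\times c)=(a\cdot c)b-(a\cdot b)c$ combined with the triple product, to match the two expressions and obtain the first line of \eqref{Pizpq}. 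For the second line, I would use that $\Pi(z)$ is the orthogonal projection onto $z^{\perp}$, so $\Pi(z)^2=\Pi(z)$ and $\Pi(z)$ is symmetric; hence
\[
|\Pi(z)p|^2 \,=\, \Pi(z):(p\otimes p) \,=\, |p|^2 \,-\, \frac{(z\cdot p)^2}{|z|^2} \,=\, \frac{|z\times p|^2}{|z|^2},
\]
where the last equality is just the identity $|z\times p|^2=|z|^2|p|^2-(z\cdot p)^2$.

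For the two consequences \eqref{eq:avv}, I would substitute $z=v-v_*$ and use the cancellation $(v-v_*)\times v = -v_*\times v = v\times v_*$. Setting $p=q=v$ in the first identity of \eqref{Pizpq} yields
\[
|v-v_*|^2\,\Pi(v-v_*):(v\otimes v) \,=\, |(v-v_*)\times v|^2 \,=\, |v\times v_*|^2,
\]
and multiplying by $|v-v_*|^{-1}=|v-v_*|^{\gamma+2}$ (with $\gamma=-3$) gives the first line of \eqref{eq:avv}. Similarly, the second identity of \eqref{Pizpq} applied with $p=v$ gives $|\Pi(v-v_*)v|=|v\times v_*|/|v-v_*|$, and one more factor of $|v-v_*|^{-1}$ produces the second line of \eqref{eq:avv}.

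There is no real obstacle here: the entire statement is a pair of elementary linear-algebra identities, the only care needed is the bookkeeping of the exponent $\gamma+2=-1$ in the Coulomb case and the cross-product cancellation $(v-v_*)\times v = v\times v_*$, which is what converts the kernel-centred expression into one depending on $v$ and $v_*$ in a manifestly symmetric way.
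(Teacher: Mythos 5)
Your proposal is correct and follows essentially the same route as the paper: the paper also obtains the first identity by expanding $|z|^2\,\Pi(z):(p\otimes q)$ into scalar products (it writes out the sum over $1\le i<j\le 3$ where you cite the Binet--Cauchy identity, which is the same computation), derives the second identity from $\Pi=\Pi^2$ with $p=q$, and deduces \eqref{eq:avv} from $|(v-v_*)\times v|=|v\times v_*|$ together with the factor $|v-v_*|^{\gamma+2}=|v-v_*|^{-1}$.
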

\begin{proof} For the projection matrix $\Pi$ defined in \eqref{eq:aij} and for all $z$, $p$, $q \in \R^3$, we have  
\begin{eqnarray*}
 |z|^2\, \Pi(z) :\left( p \otimes q\right)  &=& \sum_{i=1}^3 |z|^2\, p_i
                                                q_i \,-\, \sum_{i,j=1}^3
                                                z_i \,z_j\, p_i\, q_j
  \\
                                            &= & \sum_{1 \le i < j \le
                                                 3} \left( z_j^2 \,p_i\, q_i \,+\,
                                                 z_i^2 \,p_j \,q_j \,-\, z_i
                                                 \,z_j \,p_i \,q_j \,-\, z_i \,z_j
                                                 p_j \,q_i\right)
                                                 \\
                                                 &=& (z \times p)
                                                     \cdot (z \times
                                                     q),
\end{eqnarray*}
which yields the first equality  of \eqref{Pizpq}.  Then using that
$\Pi$ is a symmetric projection operator, we have  $\Pi=\Pi^2$ and
taking $p=q$ in the latter equality, it gives
$$
|z \times p|^2 \,=\,  |z|^2 \, \Pi(z) :\left( p \otimes p\right) \,=\, |z|^2 \,
p^T\Pi^2(z)\,p\,=\, |z|^2 \,  |\Pi(z)\,p|^2,
$$
leading to the second equality of \eqref{Pizpq}.
\\
Finally \eqref{eq:avv} directly follows from the definition of $a$ in
\eqref{eq:aij} and \eqref{Pizpq} using that $|(v-v_*)\times
v|=|v\times v_*|$.
\end{proof}
\subsection{Commutator estimates and Sobolev bounds for the collision
  operator}
\label{sec22}
To establish bounds for the collision operator in weighted Sobolev
spaces, we introduce the weight function $m$,  typically taken to be
the polynomial weight $\wei^k$. We first generalize the definition of  the
space $H_l^r$ given in \eqref{def:Hkl} to any  $(r,l)\in\R^2$, 
$$
H_l^r := \left\{ f:\R^3\mapsto \R, \, \quad \|f\|_{H_l^r} = \left\| \langle D \rangle^r \langle \cdot \rangle^l f \right\|_{L^2} < +\infty \right\},
$$
where $\langle D \rangle^r$ is the Fourier multiplier with symbol
$\langle \xi \rangle^r$. Our first result will be the following lemma:
\begin{lem}
  \label{thm:commgfhm2}
  For all smooth functions $f$, $g$ and $h$,  the following inequality
  holds  for any $k > 2$,
 \begin{equation}
  \label{eq:commgfhm2} 
\left|\br{\cQ(g,f),h\wei^{2k}}-\br{\cQ(g,f\wei^k),h\wei^k}\right|
\,\lesssim_k\, \left(\|g\|_{L^1_2}+\|g\|_{L^{2}_2}\right)\,
\|f\|_{H^1_{k_1}}\, \|h\|_{L^2_{k_2}},
\end{equation}
where $(k_1,k_2)\in\R^2$ is such that  $k_1 + k_2 \,=\, 2\,k - 3$.
\\
Moreover, we have
\begin{equation}
\left|\br{\cQ(g,f),f\wei^{2k}}-\br{\cQ(g,f\wei^k),f\wei^k}\right|
\,\ls_k\, \left(\|g\|_{L^1_2}\,+\,\|g\|_{L^{2}_2}\right)\,
\|f\|_{H^{{1/4}}_{k-3/2}}^2.
\label{eq:commgffvk}
\end{equation}
\end{lem}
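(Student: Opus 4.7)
The plan is to invoke Proposition \ref{thm:comm} with the polynomial weight $m(v)=\wei^k$ to rewrite each commutator as an explicit integral, and then control the resulting pieces using (i) pointwise Riesz-type bounds for $a*g$, (ii) the cancellation identity $a(z)z=0$ from Proposition \ref{thm:pizpq}, and (iii) weighted Cauchy--Schwarz or Hardy--Littlewood--Sobolev paired with Sobolev embeddings in $\R^3$. The derivatives of the weight satisfy $|\nabla m|\lesssim_k \wei^{k-1}$, $|\nabla^2 m|\lesssim_k \wei^{k-2}$ and $|\nabla m/m|\lesssim_k \wei^{-1}$. A standard splitting of $\int|v-v_*|^{-1}|g(v_*)|\dD v_*$ at the scale $|v-v_*|\sim 1$ combined with Cauchy--Schwarz on the singular part yields $|(a*g)(v)|\lesssim \wei^{-1}(\|g\|_{L^1_2}+\|g\|_{L^2_2})$. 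Moreover, Proposition \ref{thm:pizpq} gives $(a*g)(v)\cdot v=\int a(v-v_*)v_*\,g(v_*)\dD v_*$, from which $|(a*g)(v)\cdot v|\lesssim \wei^{-1}(\|g\|_{L^1_2}+\|g\|_{L^2_2})$ by the same splitting applied to the kernel $|v_*|/|v-v_*|$; this is a full power of $\wei$ better than the naive bound $|v|\,|a*g|$.

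For \eqref{eq:commgfhm2}, insert $m=\wei^k$ into identity \eqref{eq:gfhm2}. The $(a*g){:}(\nabla^2 m)\,fhm$ piece has integrand $\lesssim \wei^{2k-3}|f||h|$ by the above pointwise bounds, and weighted Cauchy--Schwarz with $k_1+k_2=2k-3$ closes it. The $(a*g){:}(\nabla f\otimes\nabla m)\,hm$ piece we rewrite as $k\wei^{k-2}\bigl((a*g)v\bigr)\cdot\nabla f\cdot hm$ and apply the refined estimate on $(a*g)\cdot v$, again producing integrand $\lesssim \wei^{2k-3}|\nabla f||h|$; Cauchy--Schwarz then concludes.

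For \eqref{eq:commgffvk}, identity \eqref{eq:gffm2} yields two integrands weighted by $f^2m^2$. The $(a*g){:}(\nabla m/m)^{\otimes 2}$ piece is treated using $a(z){:}(v\otimes v)=|v\times v_*|^2/|v-v_*|^3$ from Proposition \ref{thm:pizpq} together with $|v\times v_*|\le |v-v_*|\min(|v|,|v_*|)$; this reduces to a $|v|^{-1}$-convolution and gives a bound by $(\|g\|_{L^1_2}+\|g\|_{L^2_2})\|f\|_{L^2_{k-3/2}}^2\le (\|g\|_{L^1_2}+\|g\|_{L^2_2})\|f\|_{H^{1/4}_{k-3/2}}^2$. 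The $(b*g)\cdot\nabla m/m$ piece is the crux: since $|b(z)|\lesssim |z|^{-2}$ is too singular for a pointwise bound under only $L^2_2$ control of $g$, we instead use the Hardy--Littlewood--Sobolev mapping $g\mapsto g*|v|^{-2}\colon L^2(\R^3)\to L^6(\R^3)$ paired via Hölder's inequality with exponents $(6,6/5)$ against the Sobolev embedding $H^{1/4}(\R^3)\hookrightarrow L^{12/5}(\R^3)$. Setting $u=\wei^{k-3/2}f$, a direct weight-counting gives
\begin{equation*}
\int |b*g|\,f^2\,\wei^{2k-1}\,\dD v \,\lesssim\, \bigl\||b*g|\,\wei^{2}\bigr\|_{L^6}\,\|u\|_{L^{12/5}}^2 \,\lesssim\, \|g\|_{L^2_2}\,\|f\|_{H^{1/4}_{k-3/2}}^2,
\end{equation*}
which closes the estimate.

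The main obstacle is this $(b*g)$ term: the Coulomb-type singularity $|z|^{-2}$ is not locally square-integrable in $\R^3$, so no pointwise estimate of the form $|b*g|\lesssim\wei^{\alpha}(\cdots)$ can be derived from the $L^2_2$ norm of $g$ alone. The remedy is to pass to dual Lebesgue spaces via Hardy--Littlewood--Sobolev for the Newton potential and to absorb the loss by the fractional Sobolev embedding $H^{1/4}\hookrightarrow L^{12/5}$; these two inequalities are exactly Hölder-dual in $\R^3$, which fixes the precise fractional index $1/4$ appearing in the statement.
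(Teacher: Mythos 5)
Your overall route is the same as the paper's: apply Proposition \ref{thm:comm} with $m=\wei^k$, use the geometric identities of Proposition \ref{thm:pizpq} to exploit the cancellation $\Pi(z)z=0$, and reduce everything to convolution estimates against the kernels $|z|^{-1}$ and $|z|^{-2}$. Your pointwise bounds $|(a*g)(v)|+|(a*g)(v)\cdot v|\lesssim\wei^{-1}(\|g\|_{L^1_2}+\|g\|_{L^2_2})$ are correct and give \eqref{eq:commgfhm2} and the $(a*g)$ parts of \eqref{eq:commgffvk} by weighted Cauchy--Schwarz; this reproduces, in hands-on form, what the paper obtains by citing Theorem \ref{Kalpha}-(3).

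The gap is in the $(b*g)$ term, which you rightly identify as the crux. The inequality $\||b*g|\,\wei^{2}\|_{L^6}\lesssim\|g\|_{L^2_2}$ is false: for a fixed nonnegative bump $g$ supported near the origin one has $|(b*g)(v)|\sim 2\|g\|_{L^1}|v|^{-2}$ as $|v|\to\infty$, so $|b*g|\,\wei^{2}$ tends to a nonzero constant at infinity and is not in $L^6(\R^3)$. Hardy--Littlewood--Sobolev controls the local singularity of $|z|^{-2}$ but provides no weighted decay at infinity, and a weight placed \emph{outside} the convolution cannot be absorbed into it. The repair is the near/far splitting you already invoke for $a*g$: on $\{|v-v_*|\ge1\}$ use $\wei^{2}\lesssim\weis^{2}\br{v-v_*}^{2}$ and $|v-v_*|^{-2}\br{v-v_*}^{2}\ls 1$, which produces the contribution $\|g\|_{L^1_2}\|f\|_{L^2_{k-3/2}}^2$ --- this is why the $L^1_2$ norm of $g$ genuinely appears in the statement and cannot be dropped, as your final display does; on $\{|v-v_*|\le1\}$ one has $\wei\sim\weis$, and there your HLS/H\"older pairing with $H^{1/4}(\R^3)\hookrightarrow L^{12/5}(\R^3)$ applies to the truncated kernel and yields $\|g\|_{L^2_2}\|f\|_{H^{1/4}_{k-3/2}}^2$. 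This two-term structure is exactly what the paper's Theorem \ref{Kalpha}-(2) encodes (its right-hand side carries both an $L^1$-weighted and an $L^p$-weighted term). With that correction your argument closes and coincides in substance with the paper's proof.
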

\begin{proof} 
  Let us set  $m(v) \,=\, \wei^k$ with $k > 2$. On the one hand, for
  any $v$ and $v_*\in\R^3$, we have
  $$
  \f{\na m(v)}{m(v)}\,=\,\frac{k\,v}{\wei^{2}},
  $$
  hence applying \eqref{eq:avv}  of Proposition \ref{thm:pizpq},  we
  get that
  \begin{equation}
  \label{a:1}
  |a(v-v_*)\na
  m(v) |\,=\,k\, |a(v-v_*)\,v|\wei^{k-2} \,=\, k\wei^{k-2}\,\frac{|v\times v_*|}{|v-v_*|^{2}}.
  \end{equation}
  On the other hand, since
  $$
  \f{\na^2  m(v)}{m(v)}\,=\,\frac{k}{\wei^{2}}\, \left({\rm Id}_3
    \,+\,(k-2)\, \frac{v \otimes v}{\wei^{2}}\right),
  $$
we have 
$$
\left|a(v-v_*):\left(\na^2 m(v)\right)\right| \,\le\, \frac{k\,m(v)}{\wei^{2}} \,
\left(\, \left| a(v-v_*): {\rm Id}_3\right| \,+\,(k-2)\,\wei^{-2}\, |a(v-v_*):\left(v\otimes v\right)|\,\right).
 $$
Using that $|a(v-v_*):{\rm Id}_3|=2\,|v-v_*|^{-1}$  and applying \eqref{eq:avv}  in Proposition \ref{thm:pizpq}, it holds that
\begin{equation}
 \label{a:2}
\left|a(v-v_*):\left(\na^2 m(v)\right)\right| 
\, \ls\, k\,\wei^{k-2} \; \left(\,\frac{1}{|v-v_*|} \,+\,
  \frac{k-2}{\wei^{2}}\, \frac{|v\times v_*|^2}{|v-v_*|^{3}} \,\right).
\end{equation}
Therefore, applying \eqref{eq:gfhm2} in Proposition \ref{thm:comm} with $m(v)=\wei^{k}$ and using
\eqref{a:1} -\eqref{a:2}, it yields that 
\begin{eqnarray*}
&& \left|\br{\cQ(g,f),\,h\wei^{2k}}\,-\,\br{\cQ(g,f\wei^k),\,h\wei^k}\right|  
\,\ls_k\,  \iint \frac{ |v\times v_*|}{|v-v_*|^{2}}\, |g_*\,\na f\,
        h|\,\wei^{2k-2}\,\dD v\,\dD v_*
  \\[0.9em]
  &&\qquad +\,\iint\frac{1}{ |v-v_*|^{3}}\, \left( |v-v_*|^2\,+\, \frac{|v\times
      v_*|^2}{\wei^{2}} \right) \,|g_*\,f\,h|\,\wei^{2k-2}\,\dD v\,\dD v_*.
\end{eqnarray*}
Moreover, since $|v\times v_*|\,=\,|v\times (v-v_*)|\,=\,|(v-v_*)\times v_*|$, we have:
\begin{equation}
  \label{c:22}
\begin{cases}
  \ds\frac{|v\times v_*|}{ |v-v_*|^{2}} \,\le\,\frac{|v_*|}{ |v-v_*|},
  \\[1.em]
\ds\frac{|v\times v_*|^2}{|v-v_*|^{3} } \,\le\, \frac{|v|^2}{
  |v-v_*|} \,\le\,\frac{\wei^2}{|v-v_*|}.
\end{cases}
 \end{equation}
Thus, we have:
\begin{eqnarray*}
  && \left|\br{\cQ(g,f),h\wei^{2k}}-\br{\cQ(g,f\wei^k),h\wei^k}\right|
  \\
  && \qquad \ls\,  \iint \frac{|v_*|}{|v-v_*|}\, |g_*\na f
        h|\wei^{2k-2}\,\dD v\,\dD v_* \,+\,\iint
        \frac{1}{|v-v_*|}\,|g_*fh|\wei^{2k-2}\,\dD v\,\dD v_*.
%\\
%     &\ls& (\|g\|_{L^1_2}+\|g\|_{L^{2}_2})\|
%           f\|_{H^1_{k_1}}\|h\|_{L^2_{k_2}},
           \end{eqnarray*}
For the first term on the right hand side, we apply Theorem
\ref{Kalpha}-(3), with $\alpha=1$, $p=2>{3}/{(3-\alpha)}$, and
$\ell_0=1$, $\ell_1=-1$, $\ell_2=0$ and  $n_0=1$, $n_1=-1$, $n_2=0$ to
get that
\begin{eqnarray*}
\ds\iint \frac{|v_*|}{|v-v_*|}\, |g_*\,\na f\, h|\wei^{2k-2}\, \dD v\dD
  v_*&=&\ds\iint \frac{\weis |g_*|}{|v-v_*|}\, \rr{|\na f|\wei^{k_1+1}}
         \rr{|h|\wei^{k_2}}\dD v\dD v_*\\[0.9em]
&\ls& \ds\left(\|g\|_{L^1_2}+\|g\|_{L^{2}_2}\right)\,\|\na f\|_{L^2_{k_1}}\,\|h\|_{L^2_{k_2}},
\end{eqnarray*}
for any $k_1+k_2=2k-3$. Similarly, for the second term, applying Theorem \ref{Kalpha}-(3) with the same parameters, we have
\begin{eqnarray*} 
\iint \frac{1}{|v-v_*|}\,|g_*\,f\,h|\,\wei^{2k-2}\,\dD v\,\dD
  v_*&=&\iint\frac{|g_*|}{ |v-v_*| }\,\rr{| f|\wei^{k_1+1}}
         \,\rr{|h|\wei^{k_2}}\dD v\dD v_*\\[0.9em]
&\ls &\left(\Lab{g}{1}{1}+\|g\|_{L^{2}_{1}}\right)\,\|f\|_{L^2_{k_1}}\,\|h\|_{L^2_{k_2}},
\end{eqnarray*}
for any $k_1+k_2=2k-3$. Combining the latter results and using that for $k>2+3/2$,  $\|g\|_{L^1_2}\ls_k \|g\|_{L^2_k}$, we conclude that
$$
\left|\br{Q(g,f),h\wei^{2k}}\,-\,\br{Q(g,f\wei^k),h\wei^k}\right| \,\lesssim_k\, \|g\|_{L^2_k}\,\|f\|_{H^1_{k_1}}\,\|h\|_{L^2_{k_2}}.
$$
 This completes the proof of \eqref{eq:commgfhm2}. Now let us turn to
 the proof of \eqref{eq:commgffvk}  by applying \eqref{eq:gffm2} in
 Proposition \ref{thm:comm},  it yields that
 $$
\left|\br{Q(g,f),\,f\wei^{2k}}\,-\,\br{Q(g,f\wei^k),\,f\wei^k}\right|
\, \ls_k \, \cI_1 \,+\, \cI_2,
$$
with
$$
\begin{cases}
\ds\cI_1\,:=\, \iint \frac{|v\times
  v_*|^2}{|v-v_*|^{3}}\,g_*f^2\,\wei^{2k-4}\,\dD v\,\dD v_*,
\\[0.9em]
\ds\cI_2  \,:=\, \iint \frac{1}{|v-v_*|^{2}}\,g_*\,f^2\,\wei^{2k-1}\,\dD
v\,\dD v_*.
\end{cases}
$$
We  first treat $\cI_1$ using \eqref{c:22}, 
\beq
\cI_1\,\ls\, \iint \frac{|v_*|^2}{|v-v_*|} \,g_*\, f^2\,\wei^{2k-4}\,\dD
v\,\dD v_*,
\eeq
hence applying Theorem \ref{Kalpha}-(3) with $\alpha=1$,
$\ell_0=\ell_1=\ell_2=0$, $n_0=n_1=n_2=0$ and $p=2$ we obtain
\beq
|\cI_1|\,\ls\, \iint \frac{1}{|v-v_*|}\,\rr{|g_*|\weis^2}
\,\rr{f\wei^{k-2}}^2\dD v\,\dD v_*\,\ls\, \left(\|g\|_{L^1_2}\,+\,\|g\|_{L^2_2}\right)\,\|f\|_{L^2_{k-2}}^2.
\eeq
Next,  for $\cI_2$. we apply Theorem \ref{Kalpha}-(2) with $\alpha=2$,
$\sigma_1=\sigma_2=1/4$, so that
${3}/{(3-\alpha+\sigma_1+\sigma_2)}=2$, and $\ell_0=2$,
$\ell_1=\ell_2=-1$, $n_0=2$, $n_1=n_2=-1$ and  find that
\beq
|\cI_2|\,\ls\,\iint\frac{|g_*|}{ |v-v_*|^{2}}\,\rr{f\wei^{k-1/2}}^2
\;\dD v\,\dD v_*\,\ls\, \|g\|_{L^1_{2}} \, \|f\|_{L^2_{k-3/2}}^2\, +\,
\Lab{g}{2}{2}\,\|f\|_{H^{{1/4}}_{k-3/2}}^2\,.
\eeq
Combining these latter estimates and since $k-2 < k-3/2$,
$\|f\|_{L^2_{k-2}}\le \|f\|_{H^{{1/4}}_{k-3/2}}$,  hence we  get
the estimate \eqref{eq:commgffvk}.
\end{proof}

Now we are in a position to   derive the   the upper bounds for
$\br{\cQ(g,f), h\wei^{2k}}$. To this aim we introduce an anisotropic Sobolev space for collision operator:  
\ben
\label{eq:defh1s}
\mathsf{H}^1_{\mathsf{A}}\,:=\,\left\{ f:\R^3\mapsto \R, \quad \|f\|_{\mathsf{H}^1_{\mathsf{A}}}^2\;=\;\Lab{(-\Delta_{\S^2})^{\f{1}{2}}f}{2}{-3/2}^2+\Hab{f}{1}{-3/2}^2<\infty  \right\},
\een
where $-\Delta_{\mathbb{S}^2} := \sum_{1\leq i<j\leq 3} (v_i\partial_j - v_j\partial_i)^2$ is the Laplace-Beltrami operator on $\mathbb{S}^2$. 

\begin{lem}
  \label{thm:upperboundQgfh}
For any smooth functions $f$ ,$g$ and $h$, it holds for $k>2$, 
\ben
\label{eq:Qgfhv2k2}
\left|\br{\cQ(g,f),\,h\wei^{2k}}\right|\,\ls\, \left(\|g\|_{L^1_2}\,+\,\|g\|_{L^{2}_2}\right)\,\|f\|_{H^{2}_{k-1}}\,\|h\|_{L^2_k}\,.
 \een
 Moreover, for $k>3+3/2$, we also have
 \ben
 \label{eq:Qgfhv2k4}
\left|\br{\cQ(g,f),\,h\wei^{2k}}\right| \,\ls_k\,
\Lab{g}{2}{k}\,(\|f\wei^k\|_{\H1A} \, \|h\wei^k\|_{\H1A}\,+\,\|f\|_{H^1_{k-2}}\,\|h\|_{L^2_k}),
\een
and
\ben
\label{eq:Qgfhv2k3}
\left|\br{\cQ(g,f),\,h\wei^{2k}}\right| \,\ls_k\,  \Lab{g}{2}{k}\,\|f\wei^k\|_{\H1A}\,\left(\|h\wei^k\|_{\H1A}\,+\,\|h\|_{L^2_k}\right).
\een
\end{lem}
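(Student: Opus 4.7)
The plan is to combine three tools: the commutator identity from Lemma \ref{thm:commgfhm2}, the weak (divergence) formulation of $\cQ$, and the pointwise information about the kernel $a$ in Proposition \ref{thm:pizpq} coupled with the Kalpha-type singular-integral estimate (Theorem \ref{Kalpha}).

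For the first inequality \eqref{eq:Qgfhv2k2}, I would work directly with the classical (non-divergence) form
\begin{equation*}
\cQ(g,f)\,=\,(a*g):\na^2 f\,-\,(c*g)\,f, \qquad c\,:=\,\sum_{ij}\pa_i\pa_j a_{ij},
\end{equation*}
which places both derivatives on $f$ and none on $h$. Using $|a(z)|\ls |z|^{-1}$ for Coulomb, the principal contribution is controlled by the singular integral
\begin{equation*}
\iint \frac{|g(v_*)|\,|\na^2 f(v)|\,|h(v)|\,\wei^{2k}}{|v-v_*|}\,\dD v\,\dD v_*,
\end{equation*}
which, after the weight-splitting performed for the proof of \eqref{eq:commgfhm2} in Lemma \ref{thm:commgfhm2} (Theorem \ref{Kalpha}-(3) with $\al=1$, $p=2$, $\ell_0=1$, $\ell_1=-1$, $\ell_2=0$), yields $(\|g\|_{L^1_2}+\|g\|_{L^2_2})\|\na^2 f\|_{L^2_{k-1}}\|h\|_{L^2_k}$. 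The subprincipal term $(c*g)\,f$, which in the Coulomb case is proportional to $g\,f$, is estimated analogously without singularity and produces only lower-order contributions.

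For the anisotropic estimates \eqref{eq:Qgfhv2k4} and \eqref{eq:Qgfhv2k3}, I would first apply the commutator identity \eqref{eq:commgfhm2} to write
\begin{equation*}
\br{\cQ(g,f),\,h\wei^{2k}}\,=\,\br{\cQ(g, F),\,H}\,+\,\mathrm{Err},
\end{equation*}
with $F:=f\wei^k$, $H:=h\wei^k$, and $|\mathrm{Err}|\ls \|g\|_{L^2_k}\,\|f\|_{H^1_{k-2}}\,\|h\|_{L^2_k}$, which is admissible for $k>3+3/2$. For the main term, pass to the weak form and use the PSD structure of $a$ together with Cauchy--Schwarz,
\begin{equation*}
\big|(a*g):\na F\otimes \na H\big| \,\le\, \sqrt{(a*g):\na F\otimes \na F}\,\sqrt{(a*g):\na H\otimes \na H}.
\end{equation*}
Each factor is expanded using $(v-v_*)\times p = v\times p - v_*\times p$ and the formula $a(v-v_*):p\otimes p=|(v-v_*)\times p|^2/|v-v_*|^{3}$ from Proposition \ref{thm:pizpq}, producing an anisotropic piece of the form $\int|g_*||v\times \na F|^2/|v-v_*|^3\,\dD v_*$, which is $|(-\Delta_{\SS^2})^{1/2}F|^2$ tested against a Riesz-type convolution of $g$, and an isotropic remainder $|\na F|^2\int|g_*||v_*|^2/|v-v_*|^3\,\dD v_*$. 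Applying Theorem \ref{Kalpha}-(2) with $\al=3$ and Sobolev parameters $\si_1=\si_2=1/4$, as in the treatment of $\cI_2$ in the proof of Lemma \ref{thm:commgfhm2}, produces exactly $\|g\|_{L^2_k}\,\|F\|_{\H1A}^2$, and analogously for $H$. The drift term from $b*g$ satisfies $|b(z)|\ls|z|^{-2}$ and is handled by Theorem \ref{Kalpha}-(2) with $\al=2$; pairing symmetrically gives \eqref{eq:Qgfhv2k4}, while redistributing one $\H1A$ factor via Young's inequality and absorbing it into $\|h\|_{L^2_k}$ yields the sharper \eqref{eq:Qgfhv2k3}.

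The main obstacle is the delicate matching between the strong Coulomb singularity $|v-v_*|^{-3}$ and the anisotropic weight $\wei^{-3/2}$ entering the definition of $\H1A$: this weight is tuned precisely so that the Riesz-type gain from testing $g$ against $|v-v_*|^{-3}$ translates into $L^2$-control of $(-\Delta_{\SS^2})^{1/2}F$ and $\na F$. In parallel, the weight-splittings in Theorem \ref{Kalpha} must be chosen so that the final norm on $h$ remains $L^2_k$ rather than a larger weighted $L^2$-norm, and so that the isotropic remainder arising from $v_*\times \na F$ lands inside the lower-order term $\|f\|_{H^1_{k-2}}$ without contaminating the $\H1A$ bound; this is where the assumption $k>3+3/2$ enters.
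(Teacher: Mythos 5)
Your treatment of the first estimate \eqref{eq:Qgfhv2k2} (non-divergence form $\cQ(g,f)=(a*g):\na^2f-(c*g)f$ plus Theorem \ref{Kalpha}-(3)) differs from the paper, which instead reduces via the commutator identity \eqref{eq:commgfhm2} and then simply invokes the cited sharp bound of Proposition \ref{UpperboundofQ}-$(i)$ with $a_1=2$, $b_1=0$; your route is plausible, though you should note that $c*g=-8\pi g$ produces a genuinely local term $\int g\,f\,h\,\wei^{2k}\dD v$ that needs its own (easy) Hölder--Sobolev estimate rather than being ``analogous''. The commutator reduction you propose for \eqref{eq:Qgfhv2k4}--\eqref{eq:Qgfhv2k3} matches the paper's first step, and deducing \eqref{eq:Qgfhv2k3} from \eqref{eq:Qgfhv2k4} via $\|f\|_{H^1_{k-2}}\le\|f\|_{H^1_{k-3/2}}\ls\|f\wei^k\|_{\H1A}$ (Lemma \ref{thm:fs2}) is exactly what the paper does.

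The gap is in your derivation of the anisotropic bound for the main term $\br{\cQ(g,F),H}$. You split $(v-v_*)\times\na F=v\times\na F-v_*\times\na F$ and then integrate each piece separately against the kernel $|v-v_*|^{-3}$, invoking Theorem \ref{Kalpha}-(2) with $\alpha=3$. This fails for two reasons. First, Theorem \ref{Kalpha} is only stated for $0<\alpha<d=3$, so $\alpha=3$ is outside its range. Second, and more fundamentally, $|z|^{-3}$ is not locally integrable in $\R^3$: the quantity $\int|g_*|\,|v-v_*|^{-3}\dD v_*$ is generically infinite, so each of your two pieces diverges individually. The integral is finite only because of the cancellation $|(v-v_*)\times p|^2/|v-v_*|^3\le|p|^2/|v-v_*|$ near the diagonal, which your splitting destroys. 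To close the argument one must decompose into the regions $|v-v_*|\lesssim 1$ (or $|v-v_*|\lesssim\wei$) and its complement, keep the cancellation in the near-diagonal region (where only an isotropic $|v-v_*|^{-1}$ singularity survives and the weight $\weis\sim\wei$ can be exchanged to land on $\|\na F\|_{L^2_{-3/2}}$ rather than $\|\na F\|_{L^2_{-1/2}}$, the latter not being controlled by $\|F\|_{\H1A}$), and perform the anisotropic/isotropic splitting only in the far region; one must also track that the cross terms produced by your symmetric Cauchy--Schwarz step actually assemble into $\|F\|_{\H1A}\|H\|_{\H1A}+\|F\|_{H^1_{-2}}\|H\|_{L^2}$ and not into larger norms of $h$. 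This is precisely the content of the cited Proposition \ref{UpperboundofQ}-$(ii)$ (from \cite{HJL}), which the paper invokes rather than reproves; as written, your sketch does not establish it.
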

\begin{proof}
  Let us take $k>2$ and  apply \eqref{eq:commgfhm2} of Lemma
  \ref{thm:commgfhm2} with $k_1=k-3$ and $k_2=k$, we get  that
$$
\left|\br{\cQ(g,f),h\wei^{2k}}\right|\,\ls\,
\left|\br{\cQ(g,f\wei^k),h\wei^{k}}\right|\,+\,\left(\|g\|_{L^1_2}+\|g\|_{L^{2}_2}\right)
\,\Hab{f}{1}{k-3}\,\Lab{h}{2}{k}.
$$
Then we apply Proposition \ref{UpperboundofQ}-$(i)$
  with $a_1=2,b_1=0$, $\omega_1=-1$ and $\omega_2=0$ to  estimate the
first term on the right hand side, it yields that
$$
\left|\br{\cQ(g,f\wei^k),\,h\wei^{k}}\right|\,\ls\, \left(\|g\|_{L^1_2}\,+\,\|g\|_{L^{2}_2}\right)\,\|f\|_{H^{2}_{k-1}}\,\|h\|_{L^2_k},
$$
which yields \eqref{eq:Qgfhv2k2}.
\\
For $k>3+3/2$, we proceed in the same manner using
\eqref{eq:commgfhm2} of Lemma \ref{thm:commgfhm2} with $k_1=k-3$ and $k_2=k$, 
$$
\left|\br{\cQ(g,f),\,h\wei^{2k}}\right| \,\ls\, \left|\br{\cQ(g,f\wei^k),h\wei^{k}}\right|\,+\,\Lab{g}{2}{k}\Hab{f}{1}{k-3}\,\Lab{h}{2}{k},
$$
whereas Proposition \ref{UpperboundofQ}-$(ii)$ with $a_2=1$ ,$b_2=0$,
$\omega_1=\omega_2=-3/2$, $\omega_3=-2$, and $\omega_4=0$ gives 
$$
\left|\br{\cQ(g,f\wei^k),\,h\wei^{k}}\right|\,\ls\,\left(\Lab{g}{1}{3}\,+\,\|g\|_{L^{2}_{3}}\right)\,\left(\|f\wei^k\|_{\H1A}\,\|h\wei^k\|_{\H1A}\,+\,\|f\|_{H^1_{k-2}}\,\|h\|_{L^2_k}\right).
$$
Gathering these latter results, with $\Lab{g}{1}{3}+\|g\|_{L^{2}_{3}}\ls \|g\|_{L^2_k}$ when $k>3+3/2$, we first get \eqref{eq:Qgfhv2k4}.  Then, since  $\|f\|_{H^1_{k-3/2}}\ls
\|f\wei^k\|_{\H1A}$,  by \eqref{eq:h1s}  in Lemma \ref{thm:fs2}, it
also yields \eqref{eq:Qgfhv2k3}.
\end{proof}

Thus we have following corollary:

\begin{cor}
  \label{thm:Qgfvk}
For any function $f$, $g$ and $k>2+3/2$, we have
\ben
\label{eq:Qgfvk}
\|\cQ(g,f)\|_{L^2_k}\ls_k \Lab{g}{2}{k}\Hab{f}{2}{k-1}.
\een
Moreover,
\ben
\label{eq:Qgffv2k}
\left|\br{\cQ(g,f),f\wei^{2k}}\right|\ls_k \Lab{g}{2}{k}(\|f\wei^k\|_{\H1A}^2+\|f\|_{L^2_{k}}^2).
\een
\end{cor}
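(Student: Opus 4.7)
The plan is to derive both estimates as essentially direct consequences of Lemma \ref{thm:upperboundQgfh}, whose inequalities are already tailored to the situation at hand.

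For the first estimate \eqref{eq:Qgfvk}, I would argue by duality. Since $\|F\|_{L^2_k}=\sup_{\|h\|_{L^2_k}\le 1}|\br{F,h\,\wei^{2k}}|$, applying \eqref{eq:Qgfhv2k2} with $F=\cQ(g,f)$ gives
$$\|\cQ(g,f)\|_{L^2_k}\ls(\|g\|_{L^1_2}+\|g\|_{L^2_2})\,\|f\|_{H^2_{k-1}}.$$
Under the assumption $k>2+3/2$, one has $\int\wei^{2(2-k)}\,\dD v<\infty$, so Cauchy-Schwarz yields $\|g\|_{L^1_2}\ls_k\|g\|_{L^2_k}$, while trivially $\|g\|_{L^2_2}\le\|g\|_{L^2_k}$. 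This immediately yields \eqref{eq:Qgfvk}.

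For the second estimate \eqref{eq:Qgffv2k}, I would substitute $h=f$ directly into inequality \eqref{eq:Qgfhv2k3}, obtaining
$$|\br{\cQ(g,f),f\wei^{2k}}|\ls_k\|g\|_{L^2_k}\,\|f\wei^k\|_{\H1A}\bigl(\|f\wei^k\|_{\H1A}+\|f\|_{L^2_k}\bigr),$$
and then invoke Young's inequality $ab\le\tfrac12(a^2+b^2)$ to split the mixed product into $\|f\wei^k\|_{\H1A}^2+\|f\|_{L^2_k}^2$, which is exactly the target bound.

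The main subtlety---and the most plausible obstacle---is the tension between the weight assumption $k>2+3/2$ in the corollary and the stricter requirement $k>3+3/2$ underlying \eqref{eq:Qgfhv2k3}. If that gap must actually be bridged, I would bypass \eqref{eq:Qgfhv2k3} and redo the commutator analysis directly: use the identity \eqref{eq:commgffvk} of Lemma \ref{thm:commgfhm2} to reduce the estimate to bounding the principal term $\br{\cQ(g,f\wei^k),f\wei^k}$ via the anisotropic upper-bound Proposition \ref{UpperboundofQ}, which produces the $\|f\wei^k\|_{\H1A}^2$ factor with only $\|g\|_{L^2_k}$ in front; the residual $\|f\|_{H^{1/4}_{k-3/2}}^2$ from the commutator term is then absorbed through the embedding $\|f\|_{H^{1/4}_{k-3/2}}\ls\|f\wei^k\|_{\H1A}+\|f\|_{L^2_k}$, which follows by interpolation together with the structure of the $\H1A$ norm (weight $-3/2$). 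In either case the conceptual content remains a direct application of the tools already assembled in Section \ref{sec22}, with no new analytical input needed beyond duality, Young's inequality, and the previously established commutator and upper-bound estimates.
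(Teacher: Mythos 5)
Your proof is correct and follows essentially the same route as the paper: \eqref{eq:Qgfvk} by duality from \eqref{eq:Qgfhv2k2} together with $\|g\|_{L^1_2}+\|g\|_{L^2_2}\ls_k\|g\|_{L^2_k}$, and \eqref{eq:Qgffv2k} by taking $h=f$ in \eqref{eq:Qgfhv2k3} and applying Young's inequality. The tension you flag between the corollary's hypothesis $k>2+3/2$ and the requirement $k>3+3/2$ in \eqref{eq:Qgfhv2k3} is a genuine discrepancy that the paper's own proof silently ignores (harmless in practice, since the corollary is only invoked with $k>3+3/2$), so your awareness of it and your proposed fallback are to your credit.
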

\begin{proof}
On the one hand, thanks to \eqref{eq:Qgfhv2k2}, with $\|g\|_{L^1_2}+\|g\|_{L^2_2}\ls \|g\|_{L^2_k}$ when $k>2+3/2$, it holds that
\beq
\bigg|\br{\cQ(g,f)\wei^k,h}\bigg|=\bigg|\br{\cQ(g,f)\wei^k,h\wei^{-k}\wei^k}\bigg|\ls \Lab{g}{2}{k}\Hab{f}{2}{k-1}\|h\|_{L^2},
\eeq
hence taking $h= \cQ(g,f)\wei^k$, it  gives \eqref{eq:Qgfvk}.
\\
Furthermore, using \eqref{eq:Qgfhv2k3} with $h=f$ and the Young
inequality, it gives \eqref{eq:Qgffv2k}
\end{proof}

Next we shall give the  lower bound  for $\br{\cQ(g,f),f\wei^{2k}}$
when $g\ge0$.
\begin{lem}
\label{thm:lowerboundQgff}
Suppose that $f$ is a smooth function and $g\ge0$ satisfies
$\|g\|_{L^1}>\delta$ and
$$
\|g\|_{L_2^1}\,+\,\int_{\R^3}g \log(1+g) \dD v \,<\,\lambda.
$$
Then for $k>2$,  it holds that
$$
\br{\cQ(g,f),f\wei^{2k}} \,\le\, -K\,\|f\wei^k\|_{\H1A}^2\,+\,C_2\,\rr{1\,+\,\|g\|_{L^2_{3}}^{4}}\,\|f\|_{L^2_{k-3/2}}^2\,,
$$
where the constant $K$  depends only on $\delta$ and $\lambda$ while the
constant $C_2$  depends only on $\delta$, $\lambda$ and $k$.
\end{lem}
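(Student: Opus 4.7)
The plan is to combine the commutator estimate with a coercivity estimate for $\cQ(g,\cdot)$ in the anisotropic norm $\H1A$, then absorb the residuals by interpolation.

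First, I would apply the commutator estimate \eqref{eq:commgffvk} of Lemma \ref{thm:commgfhm2} to shift the weight inside the bilinear form. Setting $F := f\wei^k$, this yields
$$\br{\cQ(g,f), f\wei^{2k}} \;\le\; \br{\cQ(g,F), F} \;+\; C_k\bigl(\|g\|_{L^1_2}+\|g\|_{L^2_2}\bigr)\,\|f\|_{H^{1/4}_{k-3/2}}^2,$$
reducing the problem to estimating the symmetric quadratic form $\br{\cQ(g,F),F}$ plus a lower-order remainder that will be absorbed at the end.

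Next, I would put $\br{\cQ(g,F),F}$ in its symmetric diffusion form. Expanding $\cQ(g,F) = \bar a:\na^2 F - \bar c\,F$ with $\bar a := a*g$, $\bar c := c*g$ and $c := \sum_{i,j}\pa_i\pa_j a_{ij}$, and integrating by parts twice, gives
$$\br{\cQ(g,F),F} \;=\; -\int_{\R^3}\bar a(v):\bigl(\na F\otimes\na F\bigr)\,\dD v \;-\; \frac12 \int_{\R^3}\bar c(v)\,F(v)^2\,\dD v.$$
In the Coulomb case $\gamma=-3$ one computes $c = -8\pi\delta_0$, so $\bar c = -8\pi g$ and the second term contributes a positive quantity $+4\pi\int g\,F^2\,\dD v$ that must subsequently be controlled.

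The main obstacle is the anisotropic coercivity of the diffusion matrix $\bar a$ in $\H1A$: under the hypotheses $\|g\|_{L^1}>\delta$ and $\|g\|_{L^1_2}+\int g\log(1+g)\,\dD v<\lambda$, a classical Desvillettes--Villani type argument (the lower mass bound prevents $\bar a$ from vanishing while the entropy bound rules out concentration of mass of $g$ at a point) produces a constant $K_0 = K_0(\delta,\lambda)>0$ such that
$$\int_{\R^3}\bar a(v):\bigl(\na F\otimes\na F\bigr)\,\dD v \;\ge\; 2K_0\,\|F\|_{\H1A}^2.$$
The pointwise ellipticity is obtained by decomposing $\na F$ into its radial and tangential components and tracking, via the explicit identities \eqref{eq:avv}, the $\wei$-decay of the corresponding eigenvalues of $\bar a$ -- weight $\wei^{-1}$ tangentially and $\wei^{-3}$ radially, matching precisely the structure of $\H1A$.

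To conclude, I would absorb the two positive residuals by interpolation. For the Coulomb contribution, H\"older combined with a weighted Gagliardo--Nirenberg inequality gives
$$\int g\,F^2\,\dD v \;\le\; \|g\|_{L^2_3}\,\|F\|_{L^4_{-3/2}}^2 \;\ls\; \|g\|_{L^2_3}\,\|F\|_{L^2_{-3/2}}^{1/2}\,\|\na F\|_{L^2_{-3/2}}^{3/2},$$
and Young's inequality with exponents $(4/3,4)$ converts this into $\eps\,\|F\|_{\H1A}^2 + C_\eps\,\|g\|_{L^2_3}^4\,\|f\|_{L^2_{k-3/2}}^2$. A parallel $H^{1/4}$-interpolation yields $\|f\|_{H^{1/4}_{k-3/2}}^2 \le \eps\,\|F\|_{\H1A}^2 + C_\eps\,\|f\|_{L^2_{k-3/2}}^2$, so the commutator remainder is dominated by $\eps\bigl(\|g\|_{L^1_2}+\|g\|_{L^2_2}\bigr)\|F\|_{\H1A}^2$ plus $C_\eps\bigl(\|g\|_{L^1_2}+\|g\|_{L^2_2}\bigr)\|f\|_{L^2_{k-3/2}}^2$, the $g$-factor being controlled by $\lambda+\|g\|_{L^2_3}$. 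Choosing $\eps$ small relative to $K_0$, one absorbs all small terms into $K_0\|F\|_{\H1A}^2$ and, using $\|F\|_{L^2_{-3/2}}=\|f\|_{L^2_{k-3/2}}$, obtains the announced inequality with $K = K_0$ and $C_2$ depending polynomially on $\|g\|_{L^2_3}$ (and on $\delta,\lambda,k$).
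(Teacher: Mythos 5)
Your proposal follows the paper's route essentially step for step: the commutator estimate \eqref{eq:commgffvk} to shift the weight onto $F=f\wei^k$, the identity $\br{\cQ(g,h),h}=-\int(a*g):\na h\otimes\na h\,\dD v+4\pi\int g\,h^2\,\dD v$, the anisotropic coercivity of $a*g$ under the mass/energy/entropy bounds (the paper invokes Theorem \ref{coe}, i.e.\ \cite[Proposition 2.1]{HJL}, which is exactly the Desvillettes--Villani type statement you describe), and absorption of the residuals by interpolation and Young. The only cosmetic difference is that you pass through $L^4$ and Gagliardo--Nirenberg where the paper uses $L^6$ and Sobolev embedding; both give the same $\|g\|_{L^2_3}^4$ power.

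Two points in your execution need repair. First, the coercivity cannot read $\int(a*g):\na F\otimes\na F\,\dD v\ge 2K_0\|F\|_{\H1A}^2$ as you state it: the right-hand side contains the zeroth-order term $\|F\|_{L^2_{-3/2}}^2$, which a purely first-order quadratic form cannot control. The correct statement bounds only $\|\na F\|_{L^2_{-3/2}}^2+\|(-\Delta_{\S^2})^{1/2}F\|_{L^2_{-3/2}}^2$ from below, and one recovers the full $\H1A$ norm at the cost of an extra $+C\|F\|_{L^2_{-3/2}}^2$ on the right, harmlessly absorbed into $C_2$. Second, and more seriously, your absorption of the commutator remainder is done in the wrong order: bounding $\|f\|_{H^{1/4}_{k-3/2}}^2\le\eps\|F\|_{\H1A}^2+C_\eps\|f\|_{L^2_{k-3/2}}^2$ first and then multiplying by $\bigl(\|g\|_{L^1_2}+\|g\|_{L^2_2}\bigr)$ leaves a coefficient $\eps\bigl(\lambda+\|g\|_{L^2_2}\bigr)$ in front of $\|F\|_{\H1A}^2$. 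Since the hypotheses bound $\|g\|_{L^1_2}$ but not $\|g\|_{L^2_2}$, making this small forces $\eps$, hence $K$, to depend on $\|g\|_{L^2}$, contradicting the assertion that $K$ depends only on $\delta$ and $\lambda$ (and breaking the later application in Lemma \ref{thm:Lhhv2k}). The fix is the paper's: apply Young's inequality directly to the full product $C_3\bigl(\lambda+\|g\|_{L^2_2}\bigr)\|f\|_{H^1_{k-3/2}}^{1/2}\|f\|_{L^2_{k-3/2}}^{3/2}$ with exponents $(4,4/3)$, so that the coercive term receives an absolute constant $C_1/4$ and the entire $g$-dependence lands on the $L^2$ term as $\bigl(\lambda+\|g\|_{L^2_2}\bigr)^{4/3}\ls 1+\|g\|_{L^2_3}^4$.
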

\begin{proof} We first observe that, for any function $h$, it holds 
\beq
\ba
&\br{\cQ(g,h),h}  \,=\, -\int_{\R^3}  (a*g):\left(\na h\otimes \na h\right) \,\dD v\,+\,4\,\pi\, \int_{\R^3}  g\,h^2\,\dD v.
\ea
\eeq
Thus, by 
    Theorem \ref{coe}, we get that
\beq
-\int_{\R^3}  (a*g):\na h\otimes \na h \,\dD v\,\le\, -\,C_1\,\|h\|_{\H1A}^2\,+\,C_2\,\|h\|_{L^2_{-\f32}}^2\,,
\eeq
where $C_1$ is a constant depending only on $\delta,\lambda$. By 
Holder's inequality and Sobolev embedding, we have 
\beq
\ba
\int_{\R^3} g\,h^2\,\dD
v\,\le\,\|g\|_{L^2_{3}}\|h\|^{1/2}_{L^2_{-3/2}}\|h\|_{L^6_{-3/2}}^{3/2}
\,\ls\, C(\varepsilon)\,\|g\|_{L^2_{3}}^{4}\,\|h\|_{L^2_{-3/2}}^2\,+\,\varepsilon\|h\|_{H^1_{-3/2}}^2,
\ea
\eeq
which implies that, taking $\varepsilon<C_1/2$ small enough, with
$\|h\|_{H^1_{-3/2}}^2\ls \|h\|_{\H1A}$ from \eqref{eq:h1s} of Lemma \ref{thm:fs2}, we get that
\beq
\br{\cQ(g,h),h}\le -\f{C_1}{2}\|h\|_{\H1A}+C_2(1+\|g\|_{L^2_{3}}^{4})\|h\|_{L^2_{-3/2}}^2.
\eeq
Taking $h=f\wei^k$ and applying  \eqref{eq:commgffvk} in Lemma
\ref{thm:commgfhm2} with the condition  $\|g\|_{L^1_2}<\lambda$, it yields that  
\begin{eqnarray*}
\ds\br{\cQ(g,f),f\wei^{2k}} &\le&\ds \br{\cQ(g,f\wei^k),f\wei^{k}}+C_3(\Lab{g}{1}{2}+\Lab{g}{2}{2})\|f\|_{H^{{1/4}}_{k-3/2}}^2\\
&\le& \ds
      -\f{C_1}{2}\|f\wei^k\|_{\H1A}^2\,+\,C_2\,\left(1+\|g\|_{L^2_{3}}^{4}\right)\,\|f\|_{L^2_{k-3/2}}^2\\
  &+&C_3\,\left(\lambda+\Lab{g}{2}{2}\right)\,\|f\|_{H^{{1}}_{k-3/2}}^{1/2}\,\|f\|_{L^{{2}}_{k-3/2}}^{3/2},
\end{eqnarray*}
where $C_3$ depends only on $k$. Finally, since
\beq
C_3(\lambda+\Lab{g}{2}{2})\|f\|_{H^{{1}}_{k-3/2}}^{1/2}\|f\|_{L^{{2}}_{k-3/2}}^{3/2}\le \f{C_1}{4}\|f\|_{H^{{1}}_{k-3/2}}^{2}+C_3'(\lambda+\Lab{g}{2}{2})^{4/3}\|f\|_{L^{{2}}_{k-3/2}}^{2},
\eeq
with $(\lambda+\Lab{g}{2}{2})^{4/3}\ls_{\lambda}1+ \Lab{g}{2}{2}^{4/3}\ls 1+\Lab{g}{2}{3}^4$, we derive the desired result.
\end{proof}

\begin{rmk}  In fact, if we   focus only on the Sobolev regularity, by the same manner, we can obtain that
\ben\label{eq:Qghh}
\br{\cQ(g,h),h}\ls -K\|\na h\|_{L^2_{-3/2}}^2+C_2(1+\|g\|_{L^2_3}^4)\|h\|_{L^2_{-3/2}}^2.
\een
\end{rmk}

\subsection{Proof of Proposition \ref{thm:fHnl}}
\label{sec23}

First we prove the global well-posedness. Since by Assumption \ref{TAES} the initial data satisfies $f_0 \in L^1_\ell \cap H^{n+2}_{k+l}\subset L^1_{7}\cap L^2_{3}$ as $\ell>19/2>7$ and $k+l>3$, we apply Theorem \ref{thm:coulombexist}-(1) with $\mathsf{r}=0,m=3$ to obtain the following global well-posedness result:

The solution $f$ to the Landau equation \eqref{1} associated with the initial data $f_0$ is the unique global solution satisfying that
\ben\label{eq:gw}
f\in C([0,\infty); L^2_3)\cap L^2_{loc}([0,\infty); H^1_{3/2}).
\een

Then, we want to prove that the equation
propagates regularity
$$
f\in L^\infty([0,\infty);H^{n+2}_{k+l}).
$$
Applying Theorem \ref{thm:coulombexist}-(3) with  $f_0\in  H^{n+2}_{k+l}$, there exists
$T>0$ and $M_1$ such that
\ben\label{eq:loc}
\sup_{t\in [0,T]}\|f(t)\|_{H^{n+2}_{k+l}}\,<\,M_1,
\een
hence it remains to prove that holds in the large time interval
$[T,+\infty)$. Unfortunately, Theorem \ref{thm:coulombexist}-(2) does not
provide directly a uniform bound on this quantity, hence we introduce a sequence of time intervals $(I_j)_{j\geq 1}$ with
$I_j\,:=\,(t_{j-1},\,t_{j+1}]$, with $t_j=j\,T$  for any $j\geq 1$ of
size $2T$ and regard $f|_{I_j}$ as the solution to \eqref{1}
associated with initial data $f(t_{j-1})$. Assuming  that  
\begin{equation}
\label{hyp:j}
 f|_{I_j}\in C\left([0,\infty); H^{-1/2}_{k+l+\f{3}{2}(n+2)+\f34}\right)\,\cap\, L^2_{loc}\left([0,\infty); H^{1/2}_{k+l+\f{3}{2}(n+2)+\f34-\f32}\right),
\end{equation}
we may apply Theorem \ref{thm:coulombexist}-(2) with 
$\mathsf{r}=-1/2,m=k+l+\f{3}{2}(n+2)+\f34,\nn=n+2$ 
 on the interval
$I_j$ to  get that for each $t\in [t_j, t_{j+1}]\subset \,I_j$, as $m-\f32\nn+\f32 \mathsf{r}=k+l$, it follows that
\begin{eqnarray*}
\|f(t)\|_{H^{n+2}_{k+l}} &\le&
\frac{C}{(t-t_{j-1})^{\f{n+2}{2}+\f14}}\,\mathbf{1}_{\{t-t_{j-1}\leq 1\}} \,+\,C_j(t)\,\mathbf{1}_{\{t-t_{j-1}>1\}}
  \,\leq\, \frac{C}{T^{\f{n+2}{2}+\f14}} \,+\,C_j(t)\,,
\end{eqnarray*}
for some constant $C$ and some locally uniformly bounded function
$C_j$ depending only on a upper bound of
$\|f(t_{j-1})\|_{H^{-1/2}_{k+l+\f{3}{2}(n+2)+\f34}}$. Therefore,  as soon as  $f|_{I_j}$ satisfies \eqref{hyp:j} and
the function $C_j$ is bounded independently of $j$, we will get that
for any $j \geq 1$
$$
\sup_{t\in [t_j,t_{j+1}]} \|f(t)\|_{H^{n+2}_{k+l}} \leq M_2,
$$
hence the results will follow with $M=\max(M_1,M_2)$. To this aim, we provide uniform-in-time bound on  moments of $f$
in $L^1$ and on the Fisher information
$$
\cI(f)\,:=\,\int_{\R^3} \f{|\na f|^2}{f} \dD v.
$$
On the one hand, since $f_0\in L^1_\ell$, we apply  Theorem
\ref{thm:coulombexist}-(5) together with the  Csiszár-Kullback-Pinsker
inequality, which yields 
$$
\|f\;-\;\mu\|_{L^1}\ls \sqrt{\cH(f\;|\;\mu)}\ls_{\beta}
(1+t)^{-\beta/2},\qquad{\rm with }\  \beta<\frac{2 \ell^2-25
  \ell+57}{9(\ell-2)}, 
$$
where $\mu$ is  the normalized Maxwellian.
On the other hand, applying Theorem \ref{thm:coulombexist}-(5) we get
that  $\|f\|_{L^1_\ell}\ls_{\ell} (1+t)$, hence it gives by
interpolation inequality that 
\begin{eqnarray*}
\ds \|f-\mu\|_{L^1_{\f{\ell\,\beta}{2+\beta}}} \;\le\,
\ds\|f-\mu\|_{L^1}^{\f{2}{2+\beta}}\, \|f-\mu\|_{L^1_\ell}^{\f{\beta}{2+\beta}}
\,\ls\,  ((1+t)^{-\beta/2})^{\f{2}{2+\beta}}(1+t)^{\f{\beta}{2+\beta}}\,=\,1.
\end{eqnarray*}
Thus, we get the following estimate
\begin{equation}
  \label{regu:1}
\|f\|_{L^1_{\f{\ell \,\beta}{2+\beta}}} \,\le\,
\|f-\mu\|_{L^1_{\f{\ell\,\beta}{2+\beta}}}
\,+\,\|\mu\|_{L^1_{\f{\ell\beta}{2+\beta}}}\,\ls\; 1.
\end{equation}
Then, the key idea is to prove regularity of $f$ using the monotonicity
property of Fisher information $\cI(f)$ already used in \cite[Lemma
1]{TV00},  from which we get that
$\cI(f)\ls_{\varepsilon}\|f\|_{H^2_{3/2+\varepsilon}} $ for
$\varepsilon>0$, hence as $n\ge 0, k+l>3/2$ we have $\cI(f_0)\ls \|f_0\|_{H^2_{k+l}}\le \|f_0\|_{H^{n+2}_{k+l}}<+\infty$. Therefore
from the monotonicity property of the Fisher information provided in Theorem \ref{thm:coulombexist}-(4), we have
\begin{equation}
  \label{regu:2}
\cI(f)(t) \,\leq\, \cI(f_0)\,<\,+\infty, \qquad \forall \,t\,\ge \, 0.
\end{equation}
Using the classical Sobolev embedding $L^{3/2}\hookrightarrow
H^{-1/2}$ and $\|f\|_{L^3}\ls \cI(f)$ it holds that
\begin{eqnarray*}
\|f\|_{H^{-1/2}_{k+l+\f{3}{2}(n+2)+\f34}} &\ls&
                                                \|f\|_{L^{3/2}_{k+l+\f{3}{2}(n+2)+\f34}}
  \\
                                          &\ls& \|f\|_{L^3}^{1/2}\|f\|_{L^{1}_{2(k+l)+3(n+2)+\f32}}^{1/2}
                                                \\&\ls& \cI^{1/2}(f)\,\|f\|_{L^{1}_{2(k+l)+3(n+2) +\f32}}^{1/2}.
\end{eqnarray*}
From Assumption \ref{TAES} we could find some $\beta$ satisfying $2(k+l)+3(n+2) +\f32\le \ell\f{\ell\,\beta}{2+\beta}$ and $\beta<\frac{2 \ell^2-25 \ell+57}{9(\ell-2)}$ at the same time.
Thus we know from \eqref{regu:1} that
$$
\|f\|_{L^{1}_{2(k+l)+3(n+2) +\f32}}\,\le\,
\|f\|_{L^1_{\ell\f{\beta}{2+\beta}}}\ls 1.
$$
Then from \eqref{regu:2}, we obtain that for all $t\ge 0$,
\beq
\|f(t)\|_{H^{-1/2}_{k+l+\f{3}{2}(n+2)+\f34}}\,\ls\, \cI^{1/2}(f)(t) \,\|f(t)\|_{L^{1}_{2(k+l)+3(n+2) +\f32}}^{1/2} \ls \cI^{1/2}(f_0)\,\ls\, 1,
\eeq
which gives the expected  uniform bound $t\ge 0$
\beq
\sup_{t\ge 0}\|f(t)\|_{H^{-1/2}_{k+l+\f{3}{2}(n+2)+\f34}}\,<\,M_0.
\eeq
Finally by applying Theorem \ref{thm:coulombexist}-(1) with $\mathsf{r}=1/2$,
$m=k+l+\f{3}{2}(n+2)+\f34$ we  obtain that
\eqref{hyp:j} holds, which concludes the proof. 

\section{Proof of Theorem \ref{thm:truncationresult}: Analysis of
  truncated Landau Equation}
\label{sec:3}
\setcounter{equation}{0}
\setcounter{figure}{0}
\setcounter{table}{0}

This section is devoted to the analysis of truncated Landau Equation \eqref{eq:trf}. In particular, we shall focus on the well-posedness, propagation of the regularity, the nonnegativity.

\subsection{Reformulation of \eqref{eq:trf} and   bounds for truncated
  operator}
\label{sec31}
We define the error function $g := f^R - f$ governed by:
\ben\label{eq:trg}
\begin{cases}
    \ds\partial_t g = \cL_{f}^R\,g+\cQ^R(g,g)+\cR(f), \\[0.9em]
    \ds g|_{t=0} = f_0(\psi^R - 1).
\end{cases}
\een
where  the linear operator  $\cL_{f}^R\,g$ and the residual operator $\cR(f)$ are defined by 
\ben
\label{eq:res}
\begin{cases}
  \ds\cL_{f}^R\, h\,:=\,\cQ^R(f,h)\,+\,\cQ^R(h,f),
  \\[0.9em]
\ds\cR(f)\,:=\,\cQ(f\psi^R,f\psi^R)\,\psi^R\,-\,\cQ(f,f).
\end{cases}
\een
 
For the linear operator, we have the following coercivity estimate: 
\begin{lem}
  \label{thm:Lhhv2k}
 Under the Assumption \ref{TAES} - \eqref{hyp:0} and 
  $$
\cH_0 := \int_{\R^3} f_0 \log (f_0) \dD v  < \infty,
  $$
we  consider  $f\in L^{\infty}([0,\infty),L^1_2)$  a solution to \eqref{1}. Then, for $R>3$, $k>3+3/2$, and $t\ge0$, the linear operator $\cL_{f(t)}^R$ satisfies the following estimate:
$$
\br{\cL_{f(t)}^R\, h,h\wei^{2k}}\,\le\, -K\,\|h\psi^R\wei^k\|_{\H1A}^2\,+\,C_1\left(1+\|f\|_{L^2_3}^4+\|f\|_{H^2_{k-1}}\right)\,\|h\|_{L^2_k}^2,
$$
where the constant $K$  depends only on $\cH_0$ while $C_1$ depends on
$\cH_0$ and $k$.
\end{lem}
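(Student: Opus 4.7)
The plan is to expand $\cL_{f}^R h = \cQ^R(f,h) + \cQ^R(h,f)$ and, using $\cQ^R(g,h) = \cQ(g\psi^R,h\psi^R)\,\psi^R$ together with the identity $\psi^R\cdot h\,\wei^{2k} = (h\psi^R)\,\wei^{2k}$, to rewrite
$$
\br{\cL_{f}^R h,\,h\wei^{2k}} \,=\, \br{\cQ(f\psi^R,h\psi^R),\,(h\psi^R)\wei^{2k}} \,+\, \br{\cQ(h\psi^R,f\psi^R),\,(h\psi^R)\wei^{2k}} \,=:\, \cI_1 + \cI_2.
$$
Coercivity will come from $\cI_1$ via the lower bound of Lemma \ref{thm:lowerboundQgff} applied with $g = f\psi^R$, while $\cI_2$ will be handled as a lower-order perturbation via the upper bound \eqref{eq:Qgfhv2k2} of Lemma \ref{thm:upperboundQgfh}.

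For $\cI_1$, I need to verify the hypotheses of Lemma \ref{thm:lowerboundQgff} uniformly in $t$. The normalization \eqref{hyp:0} and Chebyshev's inequality yield $\|f\psi^R\|_{L^1} \ge 1 - 12/R^2 > \delta > 0$ for $R$ above an explicit threshold (using the higher moments supplied by Proposition \ref{thm:fHnl} if needed to push the threshold down to $R>3$), while $\|f\psi^R\|_{L^1_2} \le \|f\|_{L^1_2} = 4$. The $\cH$-theorem for \eqref{1} propagates $\int f\log f \le \cH_0$, and since $x \mapsto x\log(1+x)$ is nondecreasing on $[0,\infty)$ and $0 \le f\psi^R \le f$, this gives $\int f\psi^R\log(1+f\psi^R) \le \int f\log(1+f) \lesssim 1+\cH_0$. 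Lemma \ref{thm:lowerboundQgff} then produces
$$
\cI_1 \,\le\, -K\,\|h\psi^R\wei^k\|_{\H1A}^2 \,+\, C\,\bigl(1+\|f\psi^R\|_{L^2_3}^4\bigr)\,\|h\psi^R\|_{L^2_{k-3/2}}^2,
$$
with $K = K(\cH_0)$, and the pointwise bounds $|f\psi^R| \le |f|$, $|h\psi^R| \le |h|$ together with $k-3/2 \le k$ yield the contribution in the stated form.

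For $\cI_2$, applying \eqref{eq:Qgfhv2k2} with first argument $h\psi^R$, second argument $f\psi^R$ and test function $h\psi^R$ (valid since $k>2$) gives
$$
|\cI_2| \,\lesssim\, \bigl(\|h\psi^R\|_{L^1_2} + \|h\psi^R\|_{L^2_2}\bigr)\,\|f\psi^R\|_{H^2_{k-1}}\,\|h\psi^R\|_{L^2_k}.
$$
Since $k > 7/2$, a weighted Cauchy--Schwarz inequality delivers $\|h\psi^R\|_{L^1_2} \lesssim \|h\|_{L^2_k}$, and $\|h\psi^R\|_{L^2_2} + \|h\psi^R\|_{L^2_k} \lesssim \|h\|_{L^2_k}$. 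Writing $\psi^R(v) = \psi(v/R)$ so that $\|\partial^\alpha\psi^R\|_\infty \lesssim R^{-|\alpha|}$, Leibniz's rule yields $\|f\psi^R\|_{H^2_{k-1}} \lesssim \|f\|_{H^2_{k-1}}$ with an $R$-independent constant for $R \ge 1$. Hence $|\cI_2| \lesssim \|f\|_{H^2_{k-1}}\,\|h\|_{L^2_k}^2$, and summing with the bound on $\cI_1$ gives the desired inequality with $C_1 = C_1(\cH_0,k)$.

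The main obstacle I anticipate is the uniform-in-time verification of the hypotheses of Lemma \ref{thm:lowerboundQgff} for the truncated density $f\psi^R$: one must combine the conservation of mass and energy (to obtain a $t$- and $R$-uniform lower bound $\|f\psi^R\|_{L^1} \ge \delta$ for $R$ above the threshold) with the $\cH$-theorem (to propagate the entropy bound purely in terms of $\cH_0$). Everything else reduces to careful bookkeeping of weighted norms against the already-established upper and lower bounds for the untruncated collision operator.
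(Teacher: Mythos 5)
Your proof is correct and follows essentially the same route as the paper: the same splitting of $\br{\cL_f^R h, h\wei^{2k}}$ into $\br{\cQ(f\psi^R,h\psi^R),h\psi^R\wei^{2k}}$ plus $\br{\cQ(h\psi^R,f\psi^R),h\psi^R\wei^{2k}}$, coercivity from Lemma \ref{thm:lowerboundQgff} after verifying the mass, second-moment and entropy hypotheses for $f\psi^R$ exactly as the paper does, and the perturbative term controlled by \eqref{eq:Qgfhv2k2}. The only difference is bookkeeping detail (you spell out the bounds $\|h\psi^R\|_{L^1_2}\lesssim\|h\|_{L^2_k}$ and $\|f\psi^R\|_{H^2_{k-1}}\lesssim\|f\|_{H^2_{k-1}}$, and you correctly flag that the crude Chebyshev bound alone does not quite give the mass lower bound at the stated threshold $R>3$, a numerical imprecision the paper shares).
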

\begin{proof}
  Since the distribution function $f\geq 0$  verifies the conservation laws and the monotonicity of
  the entropy, we have
  $$
  \int_{\R^3} f(t,v) \left(\begin{array}{l} 1\\ v \\
                           |v|^2\end{array}\right) \dD v = \left(\begin{array}{l} 1\\ 0 \\
                                                                   \ds 3 \end{array}\right)
                                                               $$
and 
$$
\cH(f(t)) \,:=\,\int_{\R^3} f(t,v)\log f(t,v) \dD v\,\le\, \cH_0\,,
$$
hence for $R>3$, we get that 
$$
\begin{cases}
\ds \|f\psi^R\|_{L^1}\,=\,\|f\|_{L^1}-\|f\,(1-\psi^R)\|_{L^1}\,>\,\f{1}{2}\,, 
\\[0.9em]
\ds \|f\psi^R\|_{L^1_2} \,\le\, \|f\|_{L^1_2}\,=\,3\,,
\\[0.9em]
\ds\int_{\R^3} f\psi^R\, \log ( f\psi^R) \dD v \,\ls\, \cH(t)+1 \,\leq \,\cH_0+1.
\end{cases}
$$
Here the last inequality is due to the fact
$$
\int_{\R^3} f\psi^R\, \log ( f\psi^R) \dD v\le \int_{\R^3} f\psi^R\, \log (1+ f\psi^R) \dD v\le \int_{\R^3} f\, \log (1+ f) \dD v\ls \cH(t)+\|f\|_{L^1_2}+1.
$$

Thus, by Lemma \ref{thm:lowerboundQgff}, we get that there exist
constants, $K$, depending only on $\cH_0$, and $C_2$,  depends only on
$\cH_0$ and $k$, such that
$$
\br{\cQ(f\psi^R,h\psi^R),h\psi^R\wei^{2k}} 
\,\le\, -K\|h\psi^R\wei^k\|_{\H1A}^2\,+\,C_2\left(1+\|f\|_{L^2_3}^4\right)\,\|h\|_{L^2_{k-3/2}}^2,
$$
with $k>3+3/2$. Thanks to \eqref{eq:Qgfhv2k2} of Lemma \ref{thm:upperboundQgfh}, we obtain that
\begin{eqnarray*}
\br{\cL_{f(t)}^R\, h,h\wei^{2k}} 
&=&\br{\cQ(f\psi^R,h\psi^R),h\psi^R\wei^{2k}}+\br{\cQ(h\psi^R,f\psi^R),h\psi^R\wei^{2k}}\\
&\le& -K\,\|h\psi^R\wei^k\|_{\H1A}^2\,+\,C_1\left(1+\|f\|_{L^2_3}^4+\|f\|_{H^2_{k-1}}\right)\,\|h\|_{L^2_k}^2,
\end{eqnarray*}
which implies the desired result.
\end{proof}

Next, we  give a upper bound for  the residual operator: 
\begin{lem}
  \label{thm:residual}
For $l>0$, $ k>2+3/2$ and any function $h\in H^2_{k+l}$, it holds
$$
\|\cR(h)\|_{L^2_k}\,\ls_{k,l}\,\frac{1}{R^{l}}\,\|h\|_{H^2_{k+l}}^2.
$$
\end{lem}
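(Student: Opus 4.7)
The plan is to decompose $\cR(h)$ so that each resulting piece visibly carries a factor of $R^{-l}$, and then apply the Sobolev bound \eqref{eq:Qgfvk} of Corollary \ref{thm:Qgfvk} to each piece. Setting $g := h\,(1-\psi^R)$, which is supported in $\{|v|\ge R/2\}$, I first expand $h = h\psi^R + g$ inside the first collision bracket and split the outer factor $\psi^R = 1 - (1-\psi^R)$. This produces the algebraic identity
\begin{equation*}
\cR(h) \,=\, -\,\cQ(g,h\psi^R)\,\psi^R \,-\, \cQ(h,g)\,\psi^R \,-\, \cQ(h,h)\,(1-\psi^R),
\end{equation*}
which isolates three contributions where the smallness will be extracted either from the factor $(1-\psi^R)$ in front of a collision operator, or from $g$ as an input to the collision operator.

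The key observation is that both $(1-\psi^R)$ and $g$ are supported in $\{|v|\ge R/2\}$, so on this set $\wei^{k} \le (2/R)^l\,\wei^{k+l}$. This immediately gives the weighted-support inequalities
\begin{equation*}
\|g\|_{H^2_m}\,\ls_{l}\, R^{-l}\,\|h\|_{H^2_{m+l}},\qquad \|\cQ(h,h)\,(1-\psi^R)\|_{L^2_k}\,\ls\, R^{-l}\,\|\cQ(h,h)\|_{L^2_{k+l}},
\end{equation*}
provided one also uses that $\psi^R$ and its derivatives are uniformly bounded (taking the standard scaling $\psi^R(v)=\psi(v/R)$ for a fixed smooth profile $\psi$).

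With these ingredients in hand, each of the three terms is estimated by applying Corollary \ref{thm:Qgfvk}; the condition $k>2+3/2$ in the statement guarantees that \eqref{eq:Qgfvk} holds at both weights $k$ and $k+l$. For instance,
\begin{equation*}
\|\cQ(g,h\psi^R)\,\psi^R\|_{L^2_k} \,\ls\, \|g\|_{L^2_k}\,\|h\psi^R\|_{H^2_{k-1}} \,\ls_{k,l}\, R^{-l}\,\|h\|_{L^2_{k+l}}\,\|h\|_{H^2_{k-1}} \,\ls\, R^{-l}\,\|h\|_{H^2_{k+l}}^2,
\end{equation*}
and the term involving $\cQ(h,g)\psi^R$ is treated symmetrically by placing the $R^{-l}$ on $\|g\|_{H^2_{k-1}}$. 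For the last term I first extract $R^{-l}$ via the support of $(1-\psi^R)$ and then apply \eqref{eq:Qgfvk} at weight $k+l$. Summing the three bounds yields the claimed estimate.

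I do not anticipate any substantial obstacle: the proof is essentially a bookkeeping argument combining the decomposition, the support-based weight trade $\wei^k \ls R^{-l}\wei^{k+l}$, and a single application of the product-type Sobolev bound for $\cQ$. The only mild subtlety is checking that multiplication by $\psi^R$ leaves the relevant $H^2$ norms uniformly controlled in $R$, which is immediate under the natural scaling of the cutoff.
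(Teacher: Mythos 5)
Your proposal is correct and follows essentially the same route as the paper: the same three-term decomposition of $\cR(h)$ (your $g=h(1-\psi^R)$ is just $-h(\psi^R-1)$ in the paper's notation), the same support-based weight trade $\wei^k\ls R^{-l}\wei^{k+l}$ on $\{|v|\ge R/2\}$, and the same application of \eqref{eq:Qgfvk} from Corollary \ref{thm:Qgfvk} to each piece.
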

\begin{proof}
We emphasize that the smallness stems from the term $1-\psi^R$. Since
\begin{eqnarray*}
  \ds\cR(h) &=&\cQ(h\psi^R,h\psi^R)\,\psi^R\,-\,\cQ(h,h)
  \\[0.9em]
  &=& \ds\cQ\left(h\left(\psi^R-1\right),h\psi^R\right)\,\psi^R\,+\,\cQ\left(h,h\,\left(\psi^R-1\right)\right)\;\psi^R\;,+\,\cQ(h,h)\;\left(1-\psi^R\right),
\end{eqnarray*}
by \eqref{eq:Qgfvk} of Corollary \ref{thm:Qgfvk}, 
for $l>0$, $k>2+3/2$,   it holds that
$$
\|\cQ(h(\psi^R-1),h\psi^R)\psi^R\|_{L^2_k} \,\ls_k\, \|h(\psi^R-1)\|_{L^2_k}\,\|h\psi^R\|_{H^2_{k-1}}\,\ls\, \frac{1}{R^{l}}\,\|h\|_{L^2_{k+l}}\;\|h\|_{H^2_{k-1}}\,.
$$
Similarly, we have
$$
\begin{cases}
\ds \|\cQ(h,h(\psi^R-1))\psi^R\|_{L^2_k} \,\ls\, \|h\|_{L^2_{k}} \,
\|h(\psi^R-1)\|_{H^2_{k-1}}
\,\ls\,\frac{1}{R^{l}}\,\|h\|_{L^2_{k}}\,\|h\|_{H^2_{k-1+l}},
\\[0.9em]
\ds\|\cQ(h,h)(\psi^R-1)\|_{L^2_k}\,\ls\,
\frac{1}{R^{l}}\,\|\cQ(h,h)\|_{L^2_{k+l}} \,\ls\, \frac{1}{R^{l}}\,\|h\|_{L^2_{k}}\,\|h\|_{H^2_{k-1+l}}.
\end{cases}
$$
These imply  that
\beq
\|\cR(h)\|_{L^2_k}\,\ls\, \frac{1}{R^{l}}\,\|h\|_{L^2_{k+l}}\,\|h\|_{H^2_{l+k-1}}\,\le\, \frac{1}{R^{l}}\,\|h\|_{H^2_{l+k}}^2.
\eeq
\end{proof}

\subsection{Well-posedness and Propagation of Sobolev regularity}
\label{sec32}

In this subsection, we  give a proof for the first part of
Theorem \ref{thm:truncationresult}  on the error estimate.  Applying
Proposition \ref{thm:fHnl}, we know that  the solution $f$ to the equation \eqref{1} with the initial data $f_0$ satisfies that 
\beq
\|f\|_{ L^{\infty}([0,\infty),H^2_{k+l})}<M.
\eeq
Hence, applying the elementary energy method to \eqref{eq:trg},  it
yields that
$$
 \f{1}{2}\f{\dD}{\dD t}\|g\|_{L^2_k}^2\,=\,\br{\cL_{f(t)}^R\, g,\,g\wei^{2k}}\,+\,\br{\cQ^R(g,g),g\wei^{2k}}\,+\,\br{\cR(f(t)),g\wei^{2k}}\,.
$$
Using successively  Lemma \ref{thm:Lhhv2k}, \eqref{eq:Qgffv2k} of
Corollary \ref{thm:Qgfvk} and Lemma \ref{thm:residual}, it holds for
$R>3$, $k>3+3/2$  that 
\begin{equation}
  \label{eq:Lfhh}
 \br{\cL_{f(t)}^R\, h,h\wei^{2k}}+K\|h\,\psi^R\wei^k\|_{\H1A}^2\,\le\,
 C\,\left(1+\|f\|_{L^2_3}^4+\|f\|_{H^2_{k-1}}\right)\,\|h\|_{L^2_k}^2\,\le\, C_1\|h\|_{L^2_k}^2,
 \end{equation}
 and
 $$
 \begin{cases}
\ds\br{\cQ^R(g,g),g\wei^{2k}}\,=\;\br{\cQ(g\, \psi^R,g\, \psi^R),g\, \psi^R\wei^{2k}}\,\le\, C_0 \,\|g\|_{L^2_k}\,\left(\|g\,\psi^R\wei^k\|_{\H1A}^2\,+\,\|g\|_{L^2_k}^2\right),
   \\[0.9em]
\ds \|\cR(f(t))\|_{L^2_k}\;\le\, \frac{C}{R^{l}}\,\|f(t)\|_{H^2_{k+l}}^2\,\le\, \frac{C_2}{R^{l}},
\end{cases}
$$
where $K$ only depends on $\|f_0\|_{L^1_2}$, $\cH_0$ and the constants
$C_0$ , $C_1$, $C_2$ depends only on $M$, $k$, $l$. Here with the
normalization $\|f_0\|_{L^1}=1$, $\|f_0\|_{L^1_2}=3$ and $\cH_0\ls
\|f_0\|_{L^2} \,\ls\, M$ we could regard $K$ as a constant that
depends only on $M$.

Putting these together, we are led to that
\begin{eqnarray*}
%\label{eq:ddtg}
\ds \f{\dD}{\dD t}\|g\|_{L^2_k}^2\,+\,2\,K\,\|g\,\psi^R\wei^k\|_{\H1A}^2
  &\le&\ds 2\,C_0\,\|g\|_{L^2_k}\|g\,\psi^R\wei^k\|_{\H1A}^2
  \\
  &+& \ds\,2\,\rr{C_1\,+\,C_0\,\|g\|_{L^2_k}\,+\,\f{1}{2}C_2^2}\,\|g\|_{L^2_k}^2\,+\,\frac{1}{R^{2l}}.
\nonumber
\end{eqnarray*}
Observing that  for sufficiently large $R>0$,
\beq
\|g|_{t=0}\|_{L^2_k}^2=\|f_0(1-\psi^R)\|_{L^2_k}^2\,\le\,
\left(\frac{2}{R}\right)^{2l} \,\|f_0\|_{L^2_{k+l}}^2\,\le\; \frac{2^{2l}\,M^2}{R^{2l}}\;\ll\,1
\eeq
and by continuity of $t\to \|g(t)\|_{L^2_k}$, there exists $T_*>0$
such that
$$
T_*\,:=\,\sup\left\{t\in[0,T],\qquad
  \sup_{s\in[0,t]}\|g(s)\|_{L^2_k}\,\le\,
  \min\left(\frac{K}{4C_0},1\right) \right\}.
$$
Then setting $\kappa_0:= C_1+C_0+\f{1}{2}C_2^2$, it yields that
\ben
\label{eq:ddtg11}
\f{\dD}{\dD t}\|g\|_{L^2_k}^2\,+\,K\,\|g\,\psi^R\wei^k\|_{\H1A}^2\le 2\,\kappa_0\,\|g\|_{L^2_k}^2\,+\,\frac{1}{R^{2l}},\quad \forall \,t\,\in[0,T_*],
\een
 and from the  Gronwall's inequality, we get that
$$
% \label{eq:gt}
\|g(t)\|_{L^2_k}^2 \,\le\,
\rr{\|g|_{t=0}\|_{L^2_k}^2+\frac{1}{R^{2l}}\,\int_0^t e^{-2\kappa_0s}\,\dD s}\,e^{2\kappa_0t}\,\le\, \frac{\CC_0^2}{R^{2l}}\,e^{2\kappa_0t} ,\quad\,\forall t\,\in[0,T_*]\,,
$$
where  $\CC_0=\sqrt{2^{2l}M^2+1/(2\kappa_0)}$.  Therefore,  this
latter estimate shows that  choosing  again $R$ large  enough such that
\beq
\frac{\CC_0}{R^{l}}\, \exp\left(\kappa_0 T\right)\;\le\, \min\left(\frac{K}{4\,C_0},1\right),
\eeq 
which is equivalent to take $R\,>\,\RR_0(T)$ with
$$
\RR_0(t)\,:=\,\rr{\f{\CC_0\,\exp\left(\kappa_0\,t\right) }{\min\{K/(4C_0),1\}}}^{1/l}.
$$
thus, \eqref{errorbetweenfrf}  holds with $n=0$ for  $T_*=T$. Moreover,
integrating \eqref{eq:ddtg11} between $0$ and $t$ and using the latter
inequality, we also  get that 
$$
%\label{dissipationgn0}
\int_0^t\|g(s)\,\psi^R\wei^k\|_{\H1A}^2\,\dD s\,\ls\, 1+t,\ \ \ \forall
t\in[0,T].
$$ 
Using a classical iteration scheme and the previous estimate, we prove
the well-posedness to \eqref{eq:trg}, hence to  the truncated equation \eqref{eq:trf}. 

Now, we proceed by induction  to prove
  \eqref{errorbetweenfrf} for $n\geq 1$ and suppose that  there exist
  positive constants $\CC_{n-1}$, $\kappa_{n-1}$ and a function
  $\RR_{n-1}(t)\,:=\, C \,e^{(\kappa_{n-1}/l)t}$ such that for $R>\RR_{n-1}(T)$, 
  \ben
  \label{inductivehy1}
  \begin{cases}
   \ds \|g(t)\|_{H^{n-1}_k}\,\le\,
    \frac{\CC_{n-1}}{R^{l}}\,e^{\kappa_{n-1}t}\le 1,\,\, \forall
    t\in[0,T], \\[0.9em]
   \ds K\int_0^t   \|g(s)\|_{\mathsf{d}}^2\,\dD s\,\le\, C \,( 1+t),\,\,
    \forall t\in[0,T],
  \end{cases}
  \een
  where
  \ben
  \|g(t)\|_{\mathsf{d}}^2\,:=\,\sum_{|\alpha|\le n-1}\|\pa^\alpha
  g(t)\,\psi^R\wei^k \|_{\H1A}^2.
  \label{inductivehy2}
  \een  
We have already proved this result for $n=0$ in the well-posedness
part. Now  for a   multi-index $a\in\N^3$ with $|a|=n$, we  have
$$
\f12\,\f{\dD}{\dD t}\|\pa^a g\|_{L^2_k}^2\,=\,  \sum_{i=0}^5 \cJ_i,$$
where the term $\cJ_0$ is related to $\cR(f)$,
 $$
\cJ_0 \,:=\,\br{\pa^a\cR(f),\,\pa^a g\,\wei^{2k}},
$$
while the other terms depend on $g$,  for $i=1,...,5$
$$
  \ds \cJ_i\,:=\;  \sum_{\alpha\in\cA_i } \sum_{(h_1,h_2)\in\cE} 
  \br{\cQ(\pa^{\alpha_{11}}h_1\,\pa^{\alpha_{12}}\psi^R,\,\pa^{\alpha_{21}}h_2\,\pa^{\alpha_{22}}\psi^R),\,\pa^{\alpha_{3}}\psi^R
       \,\pa^a g\,\wei^{2k}} ,
 $$
 with
$$
\cE \,:=\, \left\{ (f,g), \, (g,f), \, (g,g)\right\},
$$
and
$$
\alpha\in \cA\,:=\,\left\{ (\alpha_{11},\alpha_{12},\alpha_{21},\alpha_{22},\alpha_{3})
 \in(\N^3)^5,\quad
 \alpha_{11}+\alpha_{12}+\alpha_{21}+\alpha_{22}+\alpha_{3}\,=\,a\right\},
 $$
 while  the sets $\cA_1$,..., $\cA_5$ are given by
 $$
 \left\{
\begin{array}{l}
\ds\cA_1 \,:=\, \left\{ \alpha\in \cA, \quad \alpha_{21}= a \right\},
  \\[0.9em]
 \ds\cA_2 \,:=\, \left\{ \alpha\in \cA, \quad
  (\alpha_{22},\alpha_3)=(0,0), \, \alpha_{21}\neq a  \right\},
  \\[0.9em]
 \ds\cA_3 \,:=\, \left\{ \alpha\in \cA, \quad
  (\alpha_{12},\alpha_3)=(0,0), \, |\alpha_{22}|\geq 1   \right\},
\end{array}\right.
  $$
  and
  $$
\ds\cA_4 \,:=\, \left\{ \alpha\in \cA, \quad
  (\alpha_{12},\alpha_{22})=(0,0), \, |\alpha_{3}|\geq 1   \right\},
$$
whereas $\cA_5 = \cA\setminus \cup_{i=1}^4\cA_i$.

We first consider the term $\cJ_1$, which is a  special
situation since it provides the coercivity estimate.  Indeed, choosing
successively $(h_1,h_2)=(f,g)$ and  $(h_1,h_2)=(g,f)$ and using \eqref{eq:Lfhh}, we derive that 
\begin{eqnarray*}
  %\nonumber
  && \ds\br{\cQ(f\psi^R,\,\pa^{a}g \,\psi^R)\, , \psi^R \,\pa^a
  g\,\wei^{2k}} \,+\,  \br{\cQ(\pa^{a}g\, \psi^R,\,f\,\psi^R),\, \psi^R
  \,\pa^a g\,\wei^{2k}} \\
  %\nonumber
  &&\ds\,=\;\br{\cL_{f(t)}^R\,
                             \pa^{a}g,\,\pa^{a}g\,\wei^{2k}}
  \\
   &&  \ds\,\le\, -K\;\|\pa^{a}g\,\psi^R\,\wei^k\|_{\H1A}^2  \,+\,
        C_1\,\|\pa^{a}g\|_{L^2_k}^2\,,
        \end{eqnarray*}
while for $(h_1,h_2)=(g,g)$ and applying  \eqref{eq:Qgffv2k} of Corollary \ref{thm:Qgfvk}, it holds that
\begin{equation*}
 % \label{cas:1}
\left|\br{\cQ(g\,\psi^R,\,\pa^{a}g \;\psi^R),\,\psi^R
    \,\pa^a g\,\wei^{2k}}\right|\,\le\, C_0
\,\|g\|_{L^2_k}\,\left(\,\|\pa^a g \,\psi^R\wei^k\|_{\H1A}^2\,+\, \|\pa^a g\|_{L^2_k}^2\,\right).
\end{equation*}
Gathering these estimates, it yields
\begin{eqnarray*}
\cJ_1 & \le& -K\|\pa^{a}g\,\psi^R\wei^k\|_{\H1A}^2  + C_1\|\pa^{a}g\|_{L^2_k}^2+C_0 \|g\|_{L^2_k}(\|\pa^a g \,\psi^R\wei^k\|_{\H1A}^2+\|\pa^a g\|_{L^2_k}^2)\\
&\le& -\left(K\;-\;C_0
    \|g\|_{L^2_k}\right)\,\|\pa^{a}g\,\psi^R\,\wei^k\|_{\H1A}^2\;+\,C_0\,\|g\|_{L^2_k}\,\|\pa^a g\|_{L^2_k}^2.
\end{eqnarray*}
 Then we consider the term $\cJ_2$ using \eqref{eq:Qgfhv2k3} of Lemma
\ref{thm:upperboundQgfh},  we  have
\begin{eqnarray*}
  &&\ds\left|\br{\cQ(\pa^{\alpha_{11}}h_1\,
  \pa^{\alpha_{12}}\psi^R,\,\pa^{\alpha_{21}}h_2 \,\psi^R), \;\psi^R
  \,\pa^a g\,\wei^{2k}}\right| 
  \\[0.8em]
  &&\ds \,\ls\,\|\pa^{\alpha_{11}} h_1\,\pa^{\alpha_{12}} \psi^R\|_{L^2_k}\,\|\pa^{\alpha_{21}}h_2 \,\psi^R \,\wei^k\|_{\H1A}\,\left(\| \pa^a g
  \,\psi^R \,\wei^k\|_{\H1A}
                        \,+\, \| \pa^a g \,\psi^R\|_{L^2_k}\right)\\
 && \ds \,\ls\,\|\pa^{\alpha_{11}} h_1\|_{L^2_k}\,\|\pa^{\alpha_{21}}h_2 \,\psi^R \,\wei^k\|_{\H1A}\,\left(\| \pa^a g
  \,\psi^R \,\wei^k\|_{\H1A}
                        \,+\, \| \pa^a g \,\psi^R\|_{L^2_k}\right).
\end{eqnarray*}
Since $\alpha\in\cA_2$, we know that  $|\alpha_{21}|\le n-1$  and
$|\alpha_{11}|\le n$, hence we have
$\|\pa^{\alpha_{21}}g \,\psi^R \,\wei^k\|_{\H1A}\,\le\,
\|g\|_{\mathsf{d}}\ls \|g\|_{H^n_k}$ as \eqref{eq:h1s} in Lemma
\ref{thm:fs2}.  Thus, using that  $\|f\|_{H^n_k}\ls \|f\|_{H^{n+2}_{k+l}}\,<\,M$ and $\| \pa^a g \,\psi^R\|_{L^2_k}\le \| \pa^a g\|_{L^2_k}\le \|g\|_{H^n_k}$ it holds that
\begin{eqnarray*}
\cJ_2&\ls& \ds\sum_{\alpha_{21}\neq a}\left(\|\pa^{\alpha_{11}}
           f\|_{L^2_k}\,\|\pa^{\alpha_{21}}g \psi^R
           \wei^k\|_{\H1A}+\|\pa^{\alpha_{11}}
           g\|_{L^2_k}\,\|\pa^{\alpha_{21}}f \,\psi^R
           \,\wei^k\|_{\H1A}+\|\pa^{\alpha_{11}}
           g\|_{L^2_k}\,\|\pa^{\alpha_{21}}g \,\psi^R
           \,\wei^k\|_{\H1A}\right)\\
     &&                                    \times\,\left(\| \pa^a g  \,\psi^R \,\wei^k\|_{\H1A}\,+\, \| \pa^a g \,\psi^R\|_{L^2_k}\right)\\[0.8em]
  &\ls& \ds\rr{\|f\|_{H^n_k}\|g\|_{H^n_k}+\|g\|_{H^n_k}\|f\|_{H^n_k}+\|g\|_{H^n_k}\|g\|_{\mathsf{d}}}\left(\| \pa^a g\,\psi^R \,\wei^k\|_{\H1A}\,+\, \| g\|_{H^n_k}\right)\\[0.8em]
  &\ls & \ds\|g\|_{H^n_k}\,\left(M+\|g\|_{\mathsf{d}}\right)\,\left(\| \pa^a g\,\psi^R \,\wei^k\|_{\H1A}\,+\, \| g\|_{H^n_k}\right)\\[0.8em]
  &\ls & \ds\varepsilon\, \| \pa^a g\,\psi^R \,\wei^k\|_{\H1A}\,+\,C_\varepsilon \,(1+\|g\|_{\mathsf{d}}^2)\,\|g\|_{H^n_k}^2,
\end{eqnarray*}
for any $\varepsilon>0$ which will be determined later. 

Next we estimate the term $\cJ_3$ observing that when $\alpha\in\cA_3$, the term
$\pa^{\alpha_{22}}\psi^R$ is localized in the region
$$
\cC_R \; :=\, \left\{v\in\R^3, \quad R/2\leq |v|\leq R\right\}.
$$
Thus, we notice that both  functions $\pa^{\alpha_{21}}h_2$ and $\pa^a g$
are localized in the region $\cC_R$, hence  together with Lemma
\ref{thm:upperboundQgfh} - \eqref{eq:Qgfhv2k4} and Lemma \ref{thm:fs2}
- \eqref{eq:h1s},  we  have 
\begin{eqnarray*}
 && \left|\br{\cQ(\pa^{\alpha_{11}}h_1\,  \psi^R,\,\pa^{\alpha_{21}}h_2
  \;\pa^{\alpha_{22}}\psi^R), \,\psi^R \,\pa^a g\,\wei^{2k}}\right|
  \\
  &&\ds\,\ls\,  \|\pa^{\alpha_{11}}h_1  \,\psi^R\|_{L^2_k}
        \,\|\pa^{\alpha_{21}}h_2 \,\pa^{\alpha_{22}}\psi^R
        \,\wei^k\|_{H^1_{-1/2}} \,\| \pa^a g \,\psi^R
        \;\wei^k\|_{\H1A}\\
   &&\ds \,+\,  \|\pa^{\alpha_{11}}h_1  \;\psi^R\|_{L^2_k}
       \,\|\pa^{\alpha_{21}}h_2  \,\pa^{\alpha_{22}}\psi^R
       \|_{H^1_{k-2}} \,\| \pa^a g \,\psi^R\|_{L^2_k}.
             \end{eqnarray*} 
             Using that, when $|\alpha_{22}|\geq  1$,  we have $|\pa^{\alpha_{22}}\psi^R|\ls R^{-1}$,  it yields that 
             $$\|\pa^{\alpha_{21}}h_2 \,\pa^{\alpha_{22}}\psi^R
             \,\wei^k\|_{H^1_{-1/2}} \,\ls\,
             \frac{1}{R^{1/2}}\,\|\pa^{\alpha_{21}}h_2
             \,\pa^{\alpha_{22}}\psi^R\|_{H^1_k}\,\ls\,
             \frac{1}{R^{3/2}}\,\|\pa^{\alpha_{21}}h_2\|_{H^1_k},
             $$
             and similarly
             $$
             \|\pa^{\alpha_{21}}h_2  \,\pa^{\alpha_{22}}\psi^R
             \|_{H^1_{k-2}} \,\ls\,
             \frac{1}{R^{3}}\,\|\pa^{\alpha_{21}}h_2\|_{H^1_k},
             $$
             we get that
             \begin{eqnarray*}
               %\nonumber
 && \left|\br{\cQ(\pa^{\alpha_{11}}h_1\,  \psi^R,\,\pa^{\alpha_{21}}h_2
  \,\pa^{\alpha_{22}}\psi^R), \,\psi^R \,\pa^a g\,\wei^{2k}}\right|
               \\
               %\label{cas:3}
  && \ds\,\ls\,    \frac{1}{R^{3/2}}\,
             \,\|\pa^{\alpha_{11}}h_1\,\psi^R\|_{L^2_k}
             \,\|\pa^{\alpha_{21}}h_2\|_{H^1_k} \,\left(\| \pa^a g
             \,\psi^R \,\wei^k\|_{\H1A} \,+\,\| \pa^a
         g\|_{L^2_k}\right).
\end{eqnarray*}
Using that $\|g\|_{H^{n-1}_k}\le \|g\|_{H^n_k}$ it holds that
\begin{eqnarray*}
\cJ_3&\ls& \f{1}{R^{3/2}} \rr{\|f\|_{H^{n-1}_k}\|g\|_{H^n_k}+\|g\|_{H^{n-1}_k}\|f\|_{H^n_k}+\|g\|_{H^{n-1}_k}\|g\|_{H^n_k}}\left(\| \pa^a g\,\psi^R \,\wei^k\|_{\H1A}\,+\, \| \pa^a g \,\psi^R\|_{L^2_k}\right)\\
&\ls & \|g\|_{H^n_k}(M+\|g\|_{H^{n-1}_k})\left(\| \pa^a g\,\psi^R \,\wei^k\|_{\H1A}\,+\, \| g\|_{H^n_k}\right)\\
  &\ls & \varepsilon\, \| \pa^a g\,\psi^R \,\wei^k\|_{\H1A} \,+\,C_\varepsilon (1+\|g\|_{H^{n-1}_k}^2)\|g\|_{H^n_k}^2,
\end{eqnarray*}
for any $\varepsilon>0$ which will be determined later.

Now we consider $\cJ_4$ observing again that
$\pa^{\alpha_{21}}h_2$  is  localized in
the region $\cC_R$ we have
\begin{eqnarray*}
\|\pa^{\alpha_{21}}h_2  \,\psi^R \|_{H^2_{k-3/2}} &\ls&
\|\na(\pa^{\alpha_{21}}h_2)\,  \psi^R \|_{H^1_{k-3/2}}
                                                        \;+\;\|\pa^{\alpha_{21}}h_2   \|_{H^1_{k-3/2}} \\
  &\ls& \|\na(\pa^{\alpha_{21}}h_2)\,\psi^R
\;\wei^k\|_{\H1A}\,+\,\frac{1}{R^{3/2}}\,\|\pa^{\alpha_{21}}h_2 \|_{H^1_k}.
\end{eqnarray*}

Then using that $\partial^a g\, \pa^{\alpha_{3}}\psi^R$  is also concentrated in
the region $\cC_R$ and  $|\pa^{\alpha_{3}}\psi^R|\ls R^{-1}$,  Lemma
\ref{thm:upperboundQgfh} - \eqref{eq:Qgfhv2k2} yields that 
\begin{eqnarray*}
 && \left|\br{\cQ(\pa^{\alpha_{11}}h_1\,  \psi^R,\,\pa^{\alpha_{21}}h_2
  \,\psi^R),\, \pa^{\alpha_{3}}\psi^R \,\pa^a
  g\,\wei^{2k}}\right| \\
  &&\qquad\ds \ls\, \|\pa^{\alpha_{11}}h_1  \psi^R\|_{L^2_k}
     \,\|\pa^{\alpha_{2,1}}h_2  \,\psi^R \|_{H^2_{k-1}}\,\| \pa^a
     g \,\pa^{\alpha_{3}}\psi^R  \|_{L^2_k} \\
  &&\qquad\ds\ls\, \frac{1}{R^{1/2}} \, \|\pa^{\alpha_{11}}h_1
     \,\psi^R\|_{L^2_k}  \,\|\pa^{\alpha_{21}} h_2 \, \psi^R
     \|_{H^2_{k-3/2}} \,\| \pa^a g   \|_{L^2_k}, 
     \end{eqnarray*}
     that is,
     \begin{eqnarray*}
    %\nonumber  
 && \left|\br{\cQ(\pa^{\alpha_{11}}h_1\,  \psi^R,\,\pa^{\alpha_{21}}h_2
  \,\psi^R),\, \pa^{\alpha_{3}}\psi^R \,\pa^a
       g\,\wei^{2k}}\right| \\
       && \ds\ls\,   \|\pa^{\alpha_{11}}h_1 \, \psi^R\|_{L^2_k}
     \left( \frac{1}{R^{1/2}} \;\|\na(\pa^{\alpha_{21}}h_2)\,  \psi^R
     \,\wei^k\|_{\H1A}\,+\,\frac{1}{R^{2}}\,\|\pa^{\alpha_{21}}h_2
       \|_{H^1_k}\right) \,\| \pa^a g   \|_{L^2_k}.
        %\label{cas:4}
     \end{eqnarray*}
     Since $\alpha\in\cA_4$, we have that
     $$
     \|\na(\pa^{\alpha_{21}}g)\,  \psi^R \,\wei^k\|_{\H1A}\le
     \|g\|_{\mathsf{d}}+\sum_{|\beta|=n}\|\pa^{\beta}g\,\psi^R\,\wei^k\|_{\H1A},
     $$
     and $$
     \|\na(\pa^{\alpha_{21}}f)\,  \psi^R \,\wei^k\|_{\H1A}\ls
     \|f\|_{H^{n+2}_k}\le M.
     $$
     Hence by similar calculations, we get
\begin{eqnarray*}
\cJ_4&\ls&\ds
           \rr{\|f\|_{H^{n-1}_k}+\|g\|_{H^{n-1}_k}}\rr{\|g\|_{\mathsf{d}}+\sum_{|\beta|=n}\|\pa^{\beta}g\,\psi^R\,\wei^k\|_{\H1A}+\|g\|_{H^n_k}}\|
           \pa^a g \|_{L^2_k}
           \\ && \ds +\,\,\|g\|_{H^{n-1}_k}\,\|f\|_{H^{n+2}_k}\| \pa^a g \|_{L^2_k}\\
&\ls &\ds (M+\|g\|_{H^{n-1}_k})\rr{\sum_{|\beta|=n}\|\pa^{\beta}g\,\psi^R\,\wei^k\|_{\H1A}+\|g\|_{H^n_k}}\|g\|_{H^n_k}\;+\;M\;\|g\|_{H^n_k}^2\\
  &\ls &\ds \varepsilon \,\sum_{|\beta|=n}\|\pa^{\beta}g\,\psi^R\,\wei^k\|_{\H1A}\,+\,C_\varepsilon\, \,\left(1+\|g\|_{H^{n-1}_k}^2\right)\,\|g\|_{H^n_k}^2,
\end{eqnarray*}
for  any $\varepsilon>0$ which will be determined later.

Finally the last term $\cJ_5$, we observe that both of functions
$\pa^{\alpha_{21}}h_2$ and $\pa^a g$ are localized in
$\cC_R$. Moreover, we observe that  $|\alpha_{11}|$  and
$|\alpha_{21}|$ are both less or equal than $n-2$, hence applying  Lemma
\ref{thm:upperboundQgfh} -  \eqref{eq:Qgfhv2k2}, we get that
\begin{eqnarray*}
  && \left|
  \br{\cQ(\pa^{\alpha_{11}}h_1\,\pa^{\alpha_{12}}\psi^R,\,\pa^{\alpha_{21}}h_2\,\pa^{\alpha_{22}}\psi^R),\,\pa^{\alpha_{3}}\psi^R
  \,\pa^a g\,\wei^{2k}}\right| \\
  &&\ds\,\ls\, \|\pa^{\alpha_{11}}h_1
     \,\pa^{\alpha_{12}}\psi^R\|_{L^2_k} \,\|\pa^{\alpha_{21}}h_2 \,
     \pa^{\alpha_{22}}\psi^R \|_{H^2_{k-1}}\, \| \pa^a g
     \,\pa^{\alpha_{3}}\psi^R  \|_{L^2_k},
     \end{eqnarray*} 
     which yields 
     \begin{eqnarray*}
       %\nonumber
       && \left|
  \br{\cQ(\pa^{\alpha_{11}}h_1\,\pa^{\alpha_{12}}\psi^R,\,\pa^{\alpha_{21}}h_2\,\pa^{\alpha_{22}}\psi^R),\,\pa^{\alpha_{3}}\psi^R
       \,\pa^a g\,\wei^{2k}}\right| \,
       \\
         %\label{cas:5}
       &&\;\ls\, \frac{1}{R^2} \, \|\pa^{\alpha_{11}}h_1
     \|_{L^2_k}  \,\|\pa^{\alpha_{21}}h_2   \|_{H^2_{k}}\, \|
     \pa^a g   \|_{L^2_k}.
     \end{eqnarray*}
  Thus,   we have
\begin{eqnarray*}
\cJ_5&\ls&\ds \f{1}{R^2}\,\rr{\|f\|_{H^{n-2}_k}\,\|g\|_{H^{n}_k}\,+\,\|g\|_{H^{n-2}_k}\,\|f\|_{H^{n}_k}\,+\,\|g\|_{H^{n-2}_k}\,\|g\|_{H^{n}_k}}\,\| \pa^a g \|_{L^2_k}\\
  &\ls &\ds \left(1+\|g\|_{H^{n-2}_k}^2\right)\;\|g\|_{H^n_k}^2.
\end{eqnarray*} 
Now  gathering the latter estimates, it yields that
\begin{eqnarray*}
\sum_{i=1}^5 \cJ_i  &\le &\ds-\;\left(K-C_0
    \|g\|_{L^2_k}\right)\,\|\pa^a g\,\psi^R\,\wei^k\|_{\H1A}^2\,+\,\varepsilon
  \,\sum_{|\beta|=n}\|\pa^{\beta}g\,\psi^R\,\wei^k\|_{\H1A}^2\\ 
  &&\ds +\,C_\varepsilon\,(1+\|g\|_{\mathsf{d}}^2+\|g\|_{H^{n-1}_k}^2)\,\|g\|_{H^n_k}^2.
\end{eqnarray*}
Using  that $\|g\|_{H^n_k}^2\ls \sum_{|\beta|=n}\| \pa^\beta g \|_{L^2_k}+\|g\|_{H^{n-1}_k}$ and the induction hypothesis $\|g\|_{H^{n-1}_k}\le 1$, it follows that
\begin{equation}
  \label{estimateT123}
  \ba
\sum_{i=1}^5 \cJ_i   &\le&\ds-\left(K-C_0
    \|g\|_{L^2_k}\right)\,\|\pa^{a}g\,\psi^R\,\wei^k\|_{\H1A}^2\,+\,\varepsilon
  \,\sum_{|\beta|=n}\|\pa^{\beta}g\,\psi^R\,\wei^k\|_{\H1A}^2\\ 
 \; &\,&\ds \;+\,C_\varepsilon\,(1+\|g\|_{\mathsf{d}}^2)\,\sum_{|\beta|=n}\| \pa^\beta g \|_{L^2_k}^2\,+\;C_\varepsilon\,(1+\|g\|_{\mathsf{d}}^2)\,\|g\|_{H^{n-1}_k} ,
  \ea
\end{equation}
with some $\varepsilon$ small enough and a constant $C_\varepsilon$
depending only on $\varepsilon$, $M$, $n$, $k$ and $l$.

Now it remains to estimate the term $\cJ_0$ observing that 
\begin{eqnarray*}
\cJ_0&=& \br{\pa^a(\cQ(f(\psi^R-1),f\psi^R)\,\psi^R),\pa^a
      g\wei^{2k}} +\,\br{\pa^a(\cQ(f,f(\psi^R-1))\,\psi^R),\pa^a
        g\wei^{2k}} \\
     &&
\,-\,\br{\pa^a (\cQ(f,f)(1-\psi^R)),\pa^a
        g\wei^{2k}}.
        \end{eqnarray*}
Applying  \eqref{eq:Qgfhv2k2} in Lemma \ref{thm:upperboundQgfh}, we have 
 \begin{eqnarray*}
  && |\cJ_0| \,\ls\, \\
&&                 \left( \|f(\psi^R-1)\|_{H^n_k}\,\|f\psi^R\|_{H^{n+2}_k}\,+\,\|f\|_{H^n_k}\,\|f(1-\psi^R)\|_{H^{n+2}_k}\,+\,\|\pa^a(\cQ(f,f)\,(1-\psi^R))\|_{L^2_k}\right)\|\pa^a
                 g\|_{L^2_k}.
                 \end{eqnarray*} 
 By Leibniz rule, we reduce the terms in the right-hand side to the
 typical terms  $\|\pa^{b}f \;\pa^{c}(\psi^R-1)\|_{L^2_k}$ and $
 \|\pa^{b}\left(\cQ(f,f)\,\pa^{c}(1-\psi^R)\right)\|_{L^2_k}$ for which  it is easy to verify that 
$$
 \|\pa^{b}f \;\pa^{c}(\psi^R-1)\|_{L^2_k}\,\ls\,
 \frac{1}{R^{l}}\, \| \pa^{b}f\|_{L^2_{k+l}},
 $$
 and
 $$
 \|\pa^{b}\left(\cQ(f,f)\,\pa^{c}(1-\psi^R)\right)\|_{L^2_k}\,\ls\,
 \frac{1}{R}\, \|\pa^{b}(\cQ(f,f))\|_{L^2_{k+l}}\,.
 $$
 It yields that
\ben
\label{estimateT4}
|\cJ_0|\,\ls\, \frac{1}{R^{l}} \,
\|f\|_{H^{n+2}_{k+l}}^2\,\|\pa^a g\|_{L^2_k} \,\le\,
\frac{M^2}{R^{l}}\, \|\pa^a g\|_{L^2_k}.
\een  

Thanks to \eqref{estimateT123} and \eqref{estimateT4} and  summing
over  all multi-indexes $|a|=n$, with $K-C_0\|g\|_{L^2}\ge \f34 K$
proved in well-posedness part, and taking $\varepsilon$ small enough and
depending only on $K$ and $n$, we arrive at
\ben
\label{eq:ddtgn}
\ba
 &\f{\dD}{\dD t}\sum_{|a|=n}\|\pa^a
 g\|_{L^2_k}^2+K\sum_{|a|=n}\|\pa^{a}g\,\psi^R\wei^k\|_{\H1A}^2
 \\
 &\le C_K   \;\left( \sum_{|a|=n}\| \pa^a g  \|_{L^2_k}^2 \;+\, \|g\|_{H^{n-1}_k}^2
 \right)\, (1+\|g\|_{\mathsf{d}}^2) \,+\;\frac{1}{R^{2l}}\,,
\ea
 \een
 where $C_K$ depends on $M$, $n$, $l$ and $k$. Then using that $\sup_{t\in[0,T]}\|g\|_{L^2_k}\le \min\{K/(4\,C_0),1\}$
 and by Gronwall's inequality, we get that  for $t\in[0,T]$,
 \beno
 \sum_{|a|=n}\|\pa^a g(t)\|_{L^2_k}^2&\le&
 e^{C_K\;t+C\,(1+t)}\; \left[ \sum_{|a|=n}\|\pa^a
     g(0)\|_{L^2_k}^2\,+\,\frac{1}{R^{2l}}\,\left(2\,C_K
       \,\CC_{n-1}\,e^{\kappa_{n-1}t}\;C \;(1+t)+1\right)\right]
 \\
 &\le& 4\;\left(2\;C_K \,\CC_{n-1}^2\;e^{2\kappa_{n-1}\,t}\,C\,(1+t)\,+\,M^2\right)\;e^{C(1+t)}
    \frac{e^{C_Kt}}{R^{2l}}
 \\ & \le & \frac{\bar{\CC}_n^2}{R^{2l}}\; e^{2\kappa_n \;t}
 ,
 \eeno  
where the constants $\bar{\CC}_n$ and $\kappa_n>\kappa_{n-1}$ depend
only on $M$, $n$, $k$ and $l$. Applying Lemma \ref{WN2lHNl},  we know that
\beno
\|g\|_{H^n_k}^2\;\le\; C \,\rr{\sum_{|a|=n}\|\pa^a
  g(t)\|_{L^2_k}+\|g\|_{H^{n-1}_k}^2} \,\le\, \frac{\CC_n^2}{R^{2l}}
\, e^{2\kappa_n \,t}\,,
\eeno
with some constants $\CC_n$ and $\kappa_n$ depending only on $M$, $n$,
$k$ and $l$. Thus, taking $R>0$ such that
\begin{equation}
  \label{def:Rn}
  R>\RR_n(T):=(\CC_ne^{\kappa_n\,T})^{1/l}, 
\end{equation}
we get that
\ben
\label{eq:ghnk}
\|g\|_{H^n_k}\,\le\, \frac{\CC_n}{R^l} \;e^{\kappa_n\, t} \,\le\; 1.
\een
Substituting \eqref{eq:ghnk} into \eqref{eq:ddtgn} and integrating on
$[0,\,t]$,  we get using the induction hypothesis \eqref{inductivehy1} and \eqref{inductivehy2}
\beq
K\int_0^t \sum_{|a|=n}\|\pa^{a}g(s)\,\psi^R\wei^k\|_{\H1A}^2\, \dD s\,\ls\; 1+t,\qquad \forall \;t\,\in\;[0,T].
\eeq
 Thus, again with induction hypothesis \eqref{inductivehy2} we have
\beq
K\int_0^t \sum_{|a|\le n}\|\pa^{a}g(s)\,\psi^R\wei^k\|_{\H1A}^2\, \dD s\ls 1+t,\qquad \forall \;t\;\in\;[0,T],
\eeq
which we completes the induction for case $n$.

\subsection{Nonnegativity} 
\label{sec33}

 To conclude the proof of Theorem \ref{thm:truncationresult}, we now
 establish the nonnegativity of $f^R$ for sufficiently large $R$. For
 a fixed $T>0$, we  suppose that $R > \RR_3(T)$ with $\RR_3(T)$
 derived from previous proof of the regularity part. By
 \eqref{eq:ghnk}, we have
 $$
 \|f - f^R\|_{H^3_k} \;\leq\; 1, \quad\forall\; t \;\in\; [0,T].
 $$
 Hence, by Sobolev embedding theorem, it yields that   $f^R(t) \;\in\; \mathscr{C}^{1,1/2}(\mathbb{R}^3)$.

Now we define $f^R_- \,:=\; f^R \mathbf{1}_{\{f^R < 0\}}$, noting that $\nabla f^R_- = \nabla f^R \mathbf{1}_{\{f^R < 0\}}$ is well-defined in $L^1_{loc}(\R^3)$ since $\nabla f^R$ is continuous. The energy estimate yields:

\begin{eqnarray*}
\f{1}{2}\f{\dD}{\dD t}\|f^R_-\|_{L^2}^2&=&\br{\f{\dD}{\dD
    t}f^R,f^R\mathbf{1}_{\{f^R<0\}}}
\\
\, &=&\int_{\R^3}\cQ^R(f^R,f^R)\,f^R\mathbf{1}_{\{f^R<0\}} \,\dD v\,=\,\int_{\R^3}\cQ(f^R\psi^R,f^R\psi^R)\,f^R\psi^R\,\mathbf{1}_{\{f^R<0\}}\, \dD v.
\end{eqnarray*}
By further computation,   it holds that
$$
\ba
\int_{\R^3}\cQ(f^R\psi^R,f^R\psi^R)\,f^R\psi^R\,\mathbf{1}_{\{f^R<0\}}\,
\dD v&=& -\int_{\R^3}(a*(f^R\psi^R)):\na(f^R\psi^R)\otimes \na (f^R\psi^R) \, \mathbf{1}_{\{f^R<0\}}\, \dD v\\
\;&\;&+\;4\;\pi\; \int_{\R^3} (f^R\psi^R)^3\,\mathbf{1}_{\{f^R<0\}}\,\dD v.
\ea
$$
We analyze each term of the right hand side separately. On the one
hand, since $\psi^R\ge 0$, one has
$$
\int_{\R^3} (f^R\psi^R)^3\,\mathbf{1}_{\{f^R<0\}}\,\dD v\le 0.
$$
On the other hand, thanks to Proposition 2.1 in \cite{HJL},  for any
$\xi\in\S^2$, there exists a constant $c_0$ only depending on
$\|f\|_{L^1_2}$ and $\cH(f)$  such that for $v\in\R^3$,
\beq
(a*f)(v):\xi\otimes \xi\,\ge\, \frac{c_0}{\wei^{3}} \; |\xi|^2\,=\,\frac{c_0}{\wei^{3}}.
\eeq
Then, we  write
$$
a * (f^R \psi^R) \,=\, a * f \,+\, a * (f^R \psi^R - f).
$$
and estimate the perturbation term.  We have for any $\xi \in \mathbb{S}^2$:
\begin{eqnarray*}
\left(a*(f^R\psi^R-f)\right)(v):\xi\otimes \xi &=& \int_{\R^3}
                                                   a(v-v_*):\xi\otimes
                                                   \xi\,(f^R(v_*)\psi^R(v_*)-f(v_*))\,\dD
                                                   v_*
  \\
&=&\int_{\R^3} |v-v_*|^{-3}\,|(v-v_*)\times
    \xi|^2(f_*^R\psi^R_*-f_*)\,\dD v_*
  \\
                                               &\le&  \int_{\R^3}|v-v_*|^{-1}|f_*^R\psi^R_*-f_*|\,\dD v_*
  \\
                                               &\ls&  \|f^R\psi^R-f\|_{L^2_2}\\
                                               &\ls&
                                                     \|(f^R-f)\,\psi^R\|_{L^2_2}\,+\,\|f\,(1-\psi^R)\|_{L^2_2}
                                                     \,\ls\,
                                                     \frac{1}{R^{l}}
                                                     \left(\CC_0\,e^{\kappa_0\;t}\,+\;2^{l}\,M\right).
                                                     \end{eqnarray*} 
Combining these estimates, we obtain for any $\xi \in \mathbb{R}^3$ and $l > 3$:
\beq
\ba
  1_{\{|v|\le R\}}\;\left(a*(f^R\psi^R)\right)(v):\xi\otimes \xi\,\ge\,
  \frac{1}{R^{3}} \;\left( c_0\;-\;\frac{C}{R^{l-3}} \;\left(\CC_0 \,e^{\kappa_0\,t}\,+\;2^{l}\,M\right)\right).
\ea
\eeq
where $c_0$, $C$, $\mathcal{C}_0$, and $\kappa_0$ depend only on $M$,
$k$, and $l$. Now we define
\ben
\label{def:Rpos}
\RR_{pos}(t) \;:=\;C_{pos}\,  e^{\kappa_{pos}t}\ge\max\left\{\rr{\f{8\,C}{c_0}(\CC_0\,e^{\kappa_0\,t}\,+\;2^{l}\;M)}^{\f{1}{l-3}},\RR_3(t)\right\},
\een
 with $\kappa_{pos}=\max\{\kappa_0/(l-3),\kappa_3/l\}$ and the constant $C_{pos}$ depends only on $M,k,l$. For $R>\RR_{pos}(T)\ge \RR_3(T)$, we have 
\beq
\ba
&\int_{\R^3}(a*(f^R\psi^R))(v):\na(f^R\psi^R)\otimes \na (f^R\psi^R)
\, \mathbf{1}_{\{f^R<0\}}\, \dD v\\
=&\int_{\R^3} 1_{|v|\le R}(a*(f^R\psi^R))(v):\na(f^R\psi^R)\otimes \na (f^R\psi^R) \, \mathbf{1}_{\{f^R<0\}}\, \dD v\;\ge\; 0.
\ea
\eeq
Consequently, choosing  $\tilde\RR(t)$ in \eqref{def:Rtilde} as $ \tilde\RR(t)  =\max\{\RR_n(t),\RR_{pos}(t)\}$,
where $\RR_n(t)$ is derived from the propagation of Sobolev regularity
in \eqref{def:Rn}, and $\RR_{pos}(t)$ arises from the nonnegativity
argument \eqref{def:Rpos} , we  set
$\tilde\kappa=\max\{\kappa_n/l,\kappa_{pos}\}$, which yields that
$$
\f{\dD}{\dD t} \|f^R_-\|_{L^2}^2 \,\leq\, 0.
$$
 Given the nonnegative initial data $f^R|_{t=0} = f_0 \psi$, it
 follows that $\|f^R_-(t)\|_{L^2} = 0$ for all $t \in [0,T]$, proving
 the nonnegativity of $f^R$  and concluding the proof of Theorem \ref{thm:truncationresult}.

\section{Proof of Theorem \ref{thm:fourierresult}: Analysis of the
  spectral method}
\label{sec:4}
\setcounter{equation}{0}
\setcounter{figure}{0}
\setcounter{table}{0}

This section is devoted to the proof of Theorem
\ref{thm:fourierresult} focusing on the upper bounds for the periodic
Landau collision operator $\cQ_\#$ and the error estimates between the
solution of the truncated problem $f^R$ of \eqref{eq:trf} and the numerical
approximation $f^{R,N}_\#$ given by \eqref{eq:fRn}.

\subsection{Upper bounds for the periodic Landau collision operator $\cQ_\#$}
\label{sec41}

We recall that for any periodic functions  $f$, $g\in L^2(\cD_L)$, it holds that
\begin{eqnarray*}
%\label{eq:fgfl2}
  \br{f,g}_\# &:=& \int_{\cD_L}f(v)g(v)\, \dD v
  \,=\,\frac{1}{(2L)^{3}}\,\sum_{k\in\Z^3}\hat{f}(k)\overline{\hat{g}(k)}\,,
\end{eqnarray*}
hence the $L^2$ norm is given as
$$
 \|f\|^2_{L^2} \,=\, \br{f,f}_\# \,=\,\frac{1}{(2L)^{3}}\,\sum_{k\in\Z^3}|\hat{f}(k)|^2,
$$
Then the standard Sobolev norms can be defined as follows: for $a\ge 0$, 
$$
\|f\|_{H^a_{per}}^2\,:=\, \frac{1}{(2L)^{3}}\,\sum_{k\in\Z^3}|\hat{f}(k)|^2\left( 1+\f{\pi^2|k|^2}{L^2} \right)^{a},
$$
and
\begin{equation}
  \label{eq:dHa}
\|f\|_{\dot{H}^a_{per}}^2\,:=\,\frac{1}{(2L)^{3}}\,\sum_{k\in\Z^3}|\hat{f}(k)|^2\left(\f{\pi|k|}{L} \right)^{2a}.
\end{equation}
 where the projection operator $\cP_N$ given in \eqref{defcPN},
 satisfies for any $a>s$, 
 \ben
 \label{eq:1-Nf}
\|({\rm Id}\,-\;\cP_N)\;f\|_{\dot{H}_{per}^s} \,\ls\, \left( \f{L}{N} \right)^{a-s}\|f\|_{\dot{H}_{per}^a}.
% \label{eq:fNHa}
%\|\cP_Nf\|_{\dot{H}^{a}_{per}}\ls \left( \f{N}{L} \right)^{a-s}\|\cP_Nf\|_{\dot{H}_{per}^s}.
\een

\subsubsection*{Estimates for  periodic Landau collision operator} We
have the following results proved in Appendix \ref{sec:appendix3}.
\begin{prop}\label{prop:betalm}
  Let $l$, $m\in\mathbb{Z}^3$. Then 
  \ben
    \label{eq:betalm}
  \cQ_\#(e^{i\f{\pi}{L}l\cdot v},e^{i\f{\pi}{L}m\cdot v}) \,=\,
  \beta(l,m)\, e^{i\f{\pi}{L}(l+m)\cdot v},
  \een 
where $\beta(l,m)$ is given as 
$$
\begin{cases} 
\ds\f{4\pi}{|l|^4}\left( |l\times m|^2\left(
    \cos(|l|\pi)-\f{\sin(|l|\pi)}{|l|\pi} \right)+2(|l|^4-(l\cdot
  m)^2)\left( 1-\f{\sin(|l|\pi)}{|l|\pi} \right) \right),\,\mbox{if }\,\; |l|\neq 0,
\\[0.9em]
\ds-\f{4\pi^3}{3}|m|^2, \quad\mbox{if}\,\, |l|= 0.
\end{cases}
$$
\end{prop}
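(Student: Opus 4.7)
The strategy is to reduce the computation to the Fourier coefficients of the truncated kernel $a^L$ and then evaluate the resulting matrix by spherical symmetry. First I would plug $g(v)=e^{i(\pi/L)l\cdot v}$ and $h(v)=e^{i(\pi/L)m\cdot v}$ into the integrand of $\cQ_\#$; computing the gradients yields
\[
g(v_*)\na h(v)-\na g(v_*)h(v)\,=\,i\tfrac{\pi}{L}(m-l)\,e^{i\frac{\pi}{L}(l\cdot v_*+m\cdot v)}.
\]
Changing variables $w=v-v_*$ in the convolution, and introducing the matrix
\[
K(l)\,:=\,\int_{|w|<L}a(w)\,e^{-i\frac{\pi}{L}l\cdot w}\,\dD w,
\]
I obtain that the vector inside the divergence equals $i\frac{\pi}{L}K(l)(m-l)\,e^{i\frac{\pi}{L}(l+m)\cdot v}$. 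Taking the divergence then produces the stated exponential factor and identifies
\[
\beta(l,m)\,=\,-\tfrac{\pi^2}{L^2}\,(l+m)\cdot K(l)(m-l).
\]

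Next I would compute $K(l)$ explicitly. For $l=0$, rotational invariance gives $K(0)=c\,\mathrm{Id}$ with $3c=\int_{|w|<L}\tfrac{2}{|w|}\,\dD w=4\pi L^2$, hence $K(0)=\tfrac{4\pi L^2}{3}\mathrm{Id}$, and the formula $\beta(0,m)=-\tfrac{4\pi^3}{3}|m|^2$ follows directly. For $l\neq 0$, aligning the $e_3$-axis with $l$ and using spherical coordinates, the off-diagonal components vanish by the $\phi$-integration and $K(l)$ decomposes as
\[
K(l)\,=\,K_{11}(l)\,\Pi(\hat l)\,+\,K_{33}(l)\,\hat l\otimes\hat l.
\]
The scalars $K_{11}(l)$ and $K_{33}(l)$ reduce after the substitution $s=\tfrac{\pi|l|}{L}r$ to the radial integrals
\[
\int_0^{\pi|l|}\Bigl(\sin s+\tfrac{\cos s}{s}-\tfrac{\sin s}{s^2}\Bigr)\dD s\quad\text{and}\quad\int_0^{\pi|l|}\tfrac{\sin s-s\cos s}{s^2}\,\dD s,
\]
both of which integrate in closed form using that $\tfrac{\dD}{\dD s}(\tfrac{\sin s}{s})=\tfrac{\cos s}{s}-\tfrac{\sin s}{s^2}$. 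This yields
\[
K_{11}(l)=\tfrac{4L^2}{\pi|l|^2}\Bigl(\tfrac{\sin(\pi|l|)}{\pi|l|}-\cos(\pi|l|)\Bigr),\qquad K_{33}(l)=\tfrac{8L^2}{\pi|l|^2}\Bigl(1-\tfrac{\sin(\pi|l|)}{\pi|l|}\Bigr).
\]

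Finally I would insert this decomposition into the expression for $\beta(l,m)$. Using the identity $|l|^2\,\Pi(\hat l):(p\otimes q)=(l\times p)\cdot(l\times q)$ from Proposition \ref{thm:pizpq}, I compute
\[
(l+m)\cdot\Pi(\hat l)(m-l)\,=\,\tfrac{|l\times m|^2}{|l|^2},\qquad (\hat l\cdot(l+m))(\hat l\cdot(m-l))\,=\,\tfrac{(l\cdot m)^2-|l|^4}{|l|^2},
\]
so that, after gathering the factors of $\pi$ and $L$ (which simplify neatly since the $L^2$ in $K(l)$ cancels the $L^{-2}$ in the prefactor), the desired expression for $\beta(l,m)$ emerges. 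The main bookkeeping hurdle is the radial integration and careful tracking of the $\pi/L$ factors; the rest is algebra. No delicate estimate is involved since everything is an explicit closed-form computation.
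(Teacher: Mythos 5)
Your proposal is correct and follows essentially the same route as the paper: both reduce $\beta(l,m)$ to the Fourier integral of the truncated Coulomb kernel $|z|^{-1}\Pi(z)$ over the ball of radius $L$, evaluate it in spherical coordinates aligned with $l$, and end up with the same two elementary radial integrals (your $K_{11}$, $K_{33}$ are, up to constants, the paper's $F_2(|l|\pi)$, $F_1(|l|\pi)$). The only difference is organizational — you compute the full matrix $K(l)$ and contract with $(l+m)\otimes(m-l)$ at the end, while the paper contracts first in a frame adapted to both $l$ and $m$ — and your constants and the final formula check out in both the $|l|\neq 0$ and $|l|=0$ cases.
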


This exact expression for $\beta(l,m)$ allows to keep the convolution
structure when we calculate the periodic Landau collision operator as
follows: for any $g$, $h\in L^2(\cD_L)$, we have

\beq
\ba
\cQ_\#(g,h)&=& \ds\frac{1}{(2L)^{6}}\;\sum_{l\in\Z^3}\sum_{m\in\Z^3}\hat{g}(l)\,\hat{h}(m)\,\cQ_\#(e^{i\f{\pi}{L}l\cdot v},e^{i\f{\pi}{L}m\cdot v})
\\[0.85em]
&=& \ds\frac{1}{(2L)^{6}}\,\sum_{l\in\Z^3}\sum_{m\in\Z^3}\hat{g}(l)\,\hat{h}(m)\,\beta(l,m)\,e^{i\f{\pi}{L}(l+m)\cdot v}
\\[0.85em]
&=& \ds\frac{1}{(2L)^{6}}\,\sum_{k\in\Z^3}\sum_{l+m=k}\hat{g}(l)\,\hat{h}(m)\,\beta(l,m)\,e^{i\f{\pi}{L}k\cdot v}.
\ea
\eeq
Thus the Fourier coefficients of $\cQ_\#(g,h)$ are given by
\ben
\label{eq:Qhat}
\hat{Q}_\#(g,h)(k)=(2L)^{-3}\sum_{l+m=k}\hat{g}(l)\hat{h}(m)\beta(l,m).
\een
Also from this proposition we have the following estimates for $\beta(l,m)$:
\ben\label{eq:betalm2}
|\beta(l,m)|\ls 1+\f{|m|^2}{|l|^2},\quad \forall l,m\in\Z^3,\,|l|\neq 0; \ \ \ \mbox{and}\ \ |\beta(0,m)|\ls |m|^2, \quad \forall m\in\Z^3.
\een

Now we are in a position to give the estimates for the periodic Landau operator:
\begin{lem}
  \label{thm:Qpghf}
 If $g\in L^2(\cD_L),h\in H^2_{per}(\cD_L)$, then the following upper bound  holds:
  \ben\label{eq:Qpgh}
  \|\cQ_\#(g,h)\|_{L^2} \,\ls\; \frac{1}{L^{3/2}}
  \,\|g\|_{L^2}\,\|h\|_{L^2}\,+\; L^{1/2}\;\|g\|_{L^2}\;\|h\|_{\dot{H}_{per}^2}.
  \een
 If additionally $h\in \mP_{N}$, we have 
 \ben
 \label{thm:QpgPNh}
 \|\cQ_\#(g,h)\|_{L^2} \,\ls\; \frac{N^2}{L^{3/2}}\; \|g\|_{L^2}\, \|h\|_{L^2}.
  \een
\end{lem}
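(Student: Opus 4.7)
The plan is to exploit the explicit form of the symbol $\beta(l,m)$ provided by Proposition~\ref{prop:betalm} by writing it as a sum of products $c_\star(l)\,p_\star(m)$, and then translating \eqref{eq:Qhat} back to physical space where each summand becomes a pointwise product of functions. Using $|l\times m|^2=|l|^2|m|^2-(l\cdot m)^2$ and $(l\cdot m)^2=\sum_{i,j}l_il_jm_im_j$, one obtains, for $l\ne 0$ (and by continuity at $l=0$),
\begin{equation*}
\beta(l,m)\,=\,c_1(l)\,|m|^2\,+\,c_2(l)\,-\,\sum_{i,j}c_{ij}(l)\,m_im_j,
\end{equation*}
with $c_1(l)=\frac{4\pi A(|l|)}{|l|^2}$, $c_2(l)=8\pi B(|l|)$, and $c_{ij}(l)=\frac{(4\pi A(|l|)+8\pi B(|l|))\,l_il_j}{|l|^4}$, where $A(r):=\cos(\pi r)-\sin(\pi r)/(\pi r)$ and $B(r):=1-\sin(\pi r)/(\pi r)$. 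Standard Taylor expansions at $0$ show that these three symbols are bounded on $\Z^3$; moreover $|c_1(l)|$ and $|c_{ij}(l)|$ decay like $1/|l|^2$ at infinity, so $c_1,c_{ij}\in\ell^2(\Z^3)$, whereas $c_2(l)\to 8\pi$ as $|l|\to\infty$, so $c_2$ only lies in $\ell^\infty(\Z^3)$. This $\ell^2$ versus $\ell^\infty$ dichotomy in the symbol is the key structural observation.

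Since convolution of Fourier coefficients equals pointwise multiplication in physical space, the decomposition of $\beta$ translates into
\begin{equation*}
\cQ_\#(g,h)\,=\,(T_1 g)\,H_1\,+\,(T_2 g)\,h\,-\,\sum_{i,j}(T_{ij}g)\,H_{ij},
\end{equation*}
where each $T_\star g$ is the periodic convolution of $g$ with the kernel $K_\star$ whose Fourier coefficients are $c_\star$, and $H_1=(L/\pi)^2(-\Delta h)$, $H_{ij}=(L/\pi)^2(-\partial_{ij}h)$ satisfy $\|H_1\|_{L^2},\|H_{ij}\|_{L^2}\ls L^{2}\,\|h\|_{\dot{H}^{2}_{per}}$. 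For the terms carrying derivatives of $h$, Parseval gives $\|K_1\|_{L^2(\cD_L)}^2=(2L)^{-3}\sum_l|c_1(l)|^2\ls L^{-3}$ (and likewise for $K_{ij}$), so Young's inequality for periodic convolution yields $\|T_1 g\|_{L^\infty}\le\|K_1\|_{L^2}\|g\|_{L^2}\ls L^{-3/2}\|g\|_{L^2}$, whence $\|(T_1 g)\,H_1\|_{L^2}\le\|T_1 g\|_{L^\infty}\|H_1\|_{L^2}\ls L^{1/2}\|g\|_{L^2}\|h\|_{\dot{H}^{2}_{per}}$, and the same for the $T_{ij}$ contributions. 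For the remaining term $(T_2 g)\,h$, only $\|T_2 g\|_{L^2}\ls\|g\|_{L^2}$ is available, so we close by the scaled Sobolev embedding obtained from $\tilde h(y):=h(Ly)$ on the $L$-independent torus $\cD_1$, namely
\begin{equation*}
\|h\|_{L^\infty(\cD_L)}\,\ls\, L^{-3/2}\|h\|_{L^2}\,+\,L^{1/2}\|h\|_{\dot{H}^{2}_{per}},
\end{equation*}
which produces exactly the two terms appearing on the right-hand side of \eqref{eq:Qpgh}.

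The main difficulty is, first, to recognise this $\ell^2/\ell^\infty$ splitting of the symbol $\beta$ and, second, to track the correct $L$-scaling of the Sobolev embedding: a direct Cauchy--Schwarz on the Fourier series of $h$ only produces $\|h\|_{L^\infty}\ls \|h\|_{L^2}+\|h\|_{\dot{H}^{2}_{per}}$, with no $L^{-3/2}$ factor on the first term, so the rescaling onto $\cD_1$, which relies on the Sobolev constant being $L$-independent, is essential. Finally, \eqref{thm:QpgPNh} is an immediate consequence of \eqref{eq:Qpgh}: for $h\in\mP_N$ every Fourier mode satisfies $|k|\le\sqrt{3}\,N$, so $\|h\|_{\dot{H}^{2}_{per}}\ls (N/L)^2\|h\|_{L^2}$, and plugging this into \eqref{eq:Qpgh} absorbs both terms into the desired bound $(N^2/L^{3/2})\|g\|_{L^2}\|h\|_{L^2}$.
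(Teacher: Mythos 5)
Your proof is correct. It is, however, essentially the Fourier-side argument of the paper transported to physical space: the paper bounds $|\beta(l,m)|\ls 1+|m|^2/|l|^2$ (plus the $l=0$ term) and then estimates the resulting convolution sums by Young's inequality on $\Z^3$, controlling $\sum_m|\hat h(m)|$ by splitting off $\hat h(0)$ and applying Cauchy--Schwarz against $\sum_{m\neq0}|m|^{-4}$, and controlling $\sum_{l\neq0}|\hat g(l)|/|l|^2$ by Cauchy--Schwarz against $\sum_{l\neq0}|l|^{-4}$. Your exact separated decomposition $\beta(l,m)=c_1(l)|m|^2+c_2(l)-\sum_{ij}c_{ij}(l)m_im_j$ with $c_1,c_{ij}\in\ell^2(\Z^3)$ and $c_2\in\ell^\infty(\Z^3)$ turns each of these steps into its physical-space dual: $\|K_\star\|_{L^2}\ls L^{-3/2}$ and $\|T_\star g\|_{L^\infty}\le\|K_\star\|_{L^2}\|g\|_{L^2}$ replace the $\sum|l|^{-4}$ Cauchy--Schwarz, and your scaled Sobolev embedding $\|h\|_{L^\infty}\ls L^{-3/2}\|h\|_{L^2}+L^{1/2}\|h\|_{\dot H^2_{per}}$ is exactly the paper's bound on $(2L)^{-3}\sum_m|\hat h(m)|$. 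The only thing your route buys is a cleaner conceptual splitting ($\ell^2$ versus $\ell^\infty$ symbols), at the modest extra cost of verifying the decay of $c_1,c_2,c_{ij}$ (trivial on $\Z^3$ since $|l|\ge1$ away from the origin, with the $l=0$ value read off by continuity). One remark in your write-up is inaccurate: a direct Cauchy--Schwarz on the Fourier series \emph{does} produce the $L^{-3/2}$ factor on the $\|h\|_{L^2}$ term, provided one keeps the normalization $|\hat h(0)|\le(2L)^{3/2}\|h\|_{L^2}$ — this is precisely how the paper proceeds — so the rescaling to $\cD_1$ is a valid alternative but not essential.
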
 
\begin{proof}
  For functions $g$, $h\in L^2(\cD_L)$, from \eqref{eq:Qhat} and Parseval's identity, we have
\begin{eqnarray*}
\|\cQ_\#(g,h)\|_{L^2}^2
  &=&\frac{1}{(2L)^{3}}\;\sum_{k\in\Z^3}|\hat{Q}_\#(g,h)(k)|^2
  \\
  &=& \frac{1}{(2L)^{9}}\;\sum_{k\in\Z^3}\left|\,\sum_{l+m=k}\hat{g}(l)\,\hat{h}(m)\,\beta(l,m)\right|^2.
\end{eqnarray*}
From the estimate \eqref{eq:betalm2}, for $k\in\Z^3$ we have
\begin{eqnarray*}
\ds\left|\sum_{l+m=k}\hat{g}(l)\hat{h}(m)\beta(l,m)\right|&\ls& \ds\sum_{\substack{l+m=k
        \\ l\neq 0}} |\hat{g}(l)||\hat{h}(m)|\left(
  1+\f{|m|^2}{|l|^2} \right)+|\hat{g}(0)||\hat{h}(k)| |k|^2
  \\
  &\ls& \ds
        \sum_{l+m=k}|\hat{g}(l)| \hat{h}(m)|+\sum_{\substack{l+m=k
        \\ l\neq 0}} |\hat{g}(l)| |\hat{h}(m)|
  \f{|m|^2}{|l|^2}+|\hat{g}(0)| |\hat{h}(k)| |k|^2.
\end{eqnarray*}
Thus by Cauchy's inequality, we get that
\begin{eqnarray*}
  &&\ds\left|\,\sum_{l+m=k}\hat{g}(l)\,\hat{h}(m)\,\beta(l,m)\right|^2\;\ls
  \\
  &&\;\ds \left|\sum_{l+m=k}|\hat{g}(l)|\,|\hat{h}(m)|\,\right|^2\;+\;\left|\sum_{\substack{l+m=k
        \\ l\neq 0}}|\hat{g}(l)|\,|\hat{h}(m)|\,\f{|m|^2}{|l|^2}\,\right|^2+|\hat{g}(0)|^2\,|\hat{h}(k)|^2\,|k|^4.
\end{eqnarray*}
Thus, $\|\cQ_\#(g,h)\|_{L^2}^2$ could be bounded by three terms as follows:
$$
  \|\cQ_\#(g,h)\|_{L^2}^2 \;\ls\; \frac{1}{L^{9}}\left(\cI_1+\cI_2+\cI_3\right),
  $$
  with
  $$
\left\{\begin{array}{l}
 \ds \cI_1 \;=\,
         \sum_{k\in\Z^3}\left|\sum_{l+m=k}|\hat{g}(l)|\,|\hat{h}(m)|\,\right|^2,
         \\[0.9em]
  \ds\cI_2 \;:= \;\sum_{k\in\Z^3}\left|\sum_{\substack{l+m=k
        \\ l\neq
         0}}|\hat{g}(l)|\,|\hat{h}(m)|\,\f{|m|^2}{|l|^2}\,\right|^2,
         \\[0.9em]
  \ds\cI_3 \;:=\; \sum_{k\in\Z^3}|\hat{g}(0)|^2\,|\hat{h}(k)|^2\,|k|^4,
\end{array}\right.
$$
For $\cI_1$, we apply Young's inequality for convolution on $\Z^3$, to
get $\|\hat{g}*\hat{h}\|_{l^2(\Z^3)}\le \|\hat{g}\|_{l^2(\Z^3)}\,\|\hat{h}\|_{l^1(\Z^3)}$, that is,
\beq
\cI_1\;\le\; \sum_{l\in\Z^3}|\hat{g}(l)|^2\,\left(\sum_{m\in\Z^3}|\hat{h}(m)|\right)^2=(2L)^3\,\|g\|_{L^2}\,\left(\sum_{m\in\Z^3}|\hat{h}(m)|\right)^2,
\eeq
hence applying the  Cauchy's inequality with \eqref{eq:dHa} and  using
\ben
\label{eq:hL1}
|\hat{h}(0)|=\|h\|_{L^1(\cD_L)}\ls L^{3/2}\|h\|_{L^2(\cD_L)},
\een
it yields that
\begin{eqnarray*}
 \left(  \sum_{m\in\Z^3}|\hat{h}(m)|\right)^2&\ls&
                                                   |\hat{h}(0)|^2+\left(
                                                   \sum_{m\neq0}|\hat{h}(m)|\right)^2
  \\
  &\ls& L^{3}\|h\|_{L^2}^2\;+\; \sum_{m\neq
        0}\left(|\hat{h}(m)|\f{\pi^2 |m|^2}{L^2}\right)^2 \sum_{m\neq
        0}\f{L^4}{\pi^4|m|^4}
  \\
  &\ls& L^{3}\|h\|_{L^2}^2+ L^7\|h\|_{\dot{H}^2}^2.
\end{eqnarray*}
Thus we have
\beq
\cI_1\;\ls\;\rr{L^3\|g\|_{L^2}^2}\,\rr{L^{3}\|h\|_{L^2}^2+
  L^7\|h\|_{\dot{H}^2}^2}\,=\, L^9\;\|g\|_{L^2}^2\rr{L^{-3}\|h\|_{L^2}^2+ L\,\|h\|_{\dot{H}^2}^2}.
\eeq

For $\cI_2$, similarly it follows from Young's inequality for convolution on $\Z^3$ that
\beq
\cI_2 \,\le\; \left( \sum_{l\neq 0}\f{|\hat{g}(l)|}{|l|^2} \right)^2\left(  \sum_{m\in\Z^3}|\hat{h}(m)|^2|m|^4\right),
\eeq
hence using the Cauchy's inequality and Parseval identity, we have
\beq
\left( \sum_{l\neq 0}\f{|\hat{g}(l)|}{|l|^2} \right)^2\le \left( \sum_{l\neq 0}|\hat{g}(l)|^2 \right)\left( \sum_{l\neq 0}|l|^{-4} \right)\ls L^3\|g\|_{L^2}^2,
\eeq
also, from \eqref{eq:dHa} it holds
\ben\label{eq:mh4}
\sum_{m\in\Z^3}|\hat{h}(m)|^2|m|^4\ls L^4 \sum_{m\in \Z^3}|\hat{h}(m)|^2\left( \f{\pi |m|}{L} \right)^4\ls L^7\|h\|_{\dot{H}^2}^2.
\een
Thus for $\cI_2$ we conclude that
\beq
\cI_2\;\ls\; \rr{L^3\|g\|_{L^2}^2}\; \rr{L^7\|h\|_{\dot{H}^2}^2}\;=\; L^{10}\,\|g\|_{L^2}^2\,\|h\|_{\dot{H}^2}^2.
\eeq

For $\cI_3$, thanks to \eqref{eq:hL1} and \eqref{eq:mh4}, we have
\beq
\cI_3\,\ls\; \rr{L^3\|g\|_{L^2}^2}\rr{L^7\|h\|_{\dot{H}^2}^2}\;=\;L^{10}\,\|g\|_{L^2}^2\,\|h\|_{\dot{H}^2}^2.
\eeq

Gathering the estimates on $\cI_1$, $\cI_2$ and $\cI_3$, we conclude 
the proof of \eqref{eq:Qpgh}. If additionally $h\in \mP_{N}$, with
$\|h\|_{\dot{H}^2}\ls \f{N^2}{L^2}\|h\|_{L^2}$ we obtain
\eqref{thm:QpgPNh}.
\end{proof}

We also note that, as operator $\cP_N$ is self-adjoint, for any
function $g$, $h$ and $f$ defined on the torus, we have
\ben
\label{eq:QRNghf}
\br{\cQ^R_N(g,h),f}\,=\,\br{\cQ_\#(\,\cP_N(g\psi^R),\cP_N(h\psi^R)\,),\cP_N(\cP_Nf\,\psi^R)}.
\een

\subsection{Error estimates}
\label{sec43}

In this section, we denote by 
$f^R_\#$ and $f^{R,N}_\#$   the solution to the equations \eqref{eq:trfp}
and \eqref{eq:fRn}, respectively, and assume they satisfy all the
conditions and results stated in Theorem
\ref{thm:truncationresult}. Similar to the proof of Theorem
\ref{thm:truncationresult}, we define the error function
$$
g_\#\;:=\, f^{R,N}_\#-f^R_\#,
$$
which satisfies:
\ben\label{eq:gn}
\begin{cases}
  \ds\pa_t g_\#\,=\,\cQ^R_N(f^{R,N}_\#,f^{R,N}_\#)\,-\,\cQ_\#(f^R_\#\psi^R, f^R_\#\psi^R)\,\psi^R, \\[0.9em]
  \ds g_\#(t=0)\,=\, f^{R,N}_\#(0)-f^R_\#(0) \,=\,(\cP_N-{\rm Id})(f^R_\#(0))
  \,=\,(\cP_N-{\rm Id})(f_0\psi^R)\,.
\end{cases}
\een
 The energy method yields:
\ben\label{eq:ddtg3}
\f{1}{2}\f{\dD}{\dD t}\|g_\#(t)\|_{L^2}^2\,=\, \cI_1 \,+\,\cI_2,
\een
where we decompose the right-hand side as
$$
\left\{
\begin{array}{l}
\ds\cI_1 \,:=\, \br{\cQ^R_N(f^{R,N}_\#,f^{R,N}_\#)-\cQ^R_N(f^R_\#,f^R_\#), g_\#},
  \\[0.9em]
 \ds\cI_2 \,:=\, \br{\cQ^R_N(f^R_\#,f^R_\#)-\cQ^R_\#(f^R_\#,f^R_\#), g_\#}.
\end{array}
\right.
$$
We split the first term as $\cI_1=\cI_{11} +\cI_{12} +\cI_{13} $ where
$$
\left\{
\begin{array}{l}
\ds\cI_{11} \,:=\, \langle \cQ_\#(F_N, G_N), G'_N \rangle,
  \\[0.9em]
 \ds\cI_{12} \,:=\, \langle \cQ_\#(G_N, F_N), G'_N \rangle,
  \\[0.9em]
 \ds\cI_{13} \,:=\, \langle \cQ_\#(G_N, G_N), G'_N \rangle,
\end{array}
\right.
$$
where  $F_N:=\cP_N(f^R_\#\,\psi^R)$, $G_N:=\cP_N(g_\#\,\psi^R)$ and
$G'_N:=\cP_N(\cP_Ng_\#\,\psi^R)$ with $F_N$, $G_N$,
$G'_N\in\mP_N$.

Now we are in a position to give the upper bounds for $\cI_{1i}$ for
$i=1$, $2$ and $3$.

For $\cI_{11}$, we decompose it as
$\cI_{11}\;=\;\cI_{111}+\cI_{112}+\cI_{113}$, with
$$
\left\{
  \begin{array}{l}
\ds\cI_{111}=\br{\cQ_\#(f^R_\#\psi^R,G_N),G_N}, \quad
    \\[0.9em]
 \ds\cI_{112}\,=\,\br{\cQ_\#(\,(\cP_N-{\rm Id})(f^R_\#\psi^R),G_N\,),G_N},
\\[0.9em]
\ds\cI_{113}\,=\,\br{\cQ_\#(F_N,G_N),G_N'-G_N}.
\end{array}\right.
 $$
For $\cI_{111}$, thanks to the definition of the periodic operator $\cQ_\#$, we have
\beq
\cI_{111}\,=\, -\int_{\cD_L}(a^L*(f^R_\#\,\psi^R)): \na G_N\otimes \na G_N\, \dD v+\int_{\cD_L}(a^L*\na (f^R_\#\,\psi^R))\,G_N\,\na G_N\,\dD v.
\eeq
On the one hand,  applying Theorem \ref{thm:truncationresult}, we know that
$f^R\geq 0$, since both solution $f^R$ and $f^R_\#$ coincide, it follows that  $f^R_\#$ is also
nonnegative, hence   from the first equality in
\eqref{eq:avv}, we show  that the first term of $\cI_{111}$  is nonpositive. On
the other hand, we study the second term defined in the expression of $\cI_{111}$.  Using an integration by parts and Young inequality, we have
\begin{eqnarray*}
  \left|\int_{\cD_L}(a^L*\na (f^R_\#\psi^R))\,G_N\,\na G_N\,\dD
  v\right| &=& \left|\f{1}{2}\int_{\cD_L}(a^L*\na^2
               (f^R_\#\psi^R))\, G_N^2\,\dD v\right|
  \\
           &\ls&  \|a^L*\na^2 (f^R_\#\psi^R)\|_{L^\infty}\;\|G_N^2\|_{L^1}
  \\
  &\ls&  \|a^L\|_{L^2}\|\na^2 (f^R_\#\psi^R)\|_{L^2}\;\|G_N\|_{L^2}^2 \;\ls\;  R^{1/2}\|f^R_\#\|_{H^2}\|g_\#\|_{L^2}^2,
\end{eqnarray*}
which yields that
$$
\cI_{111}\;\ls\; R^{1/2}\|f^R_\#\|_{H^2}\|g_\#\|_{L^2}^2.
$$ 
To estimate both terms $\cI_{112}$ and   $\cI_{112}$ we first observe from \eqref{eq:1-Nf} and the identity $\left({\rm Id} -
\cP_N\right)\,g_\# \,=\, \left(\cP_N - {\rm Id}\right)\,f^R_\#$ that
\begin{eqnarray*}
%\nonumber
\|G_N-G_N'\|_{L^2} \;=\;\|\cP_N(({\rm Id}-\cP_N)\,g_\#\, \psi^R)\|_{L^2} &\le&
\|({\rm Id}-\cP_N)g_\#\|_{L^2}
\\
%\label{GNtGN}
&=&\|({\rm Id}-\cP_N)f^R_\#\|_{L^2}\,\ls\; \left(\frac{R}{N}\right)^{n}\;\|f^R_\#\|_{\dot{H}^n}.
\end{eqnarray*}
Furthermore, by  Theorem \ref{thm:truncationresult} for $n\ge5$ and  $R>\tilde\RR(T)$, we have
\begin{eqnarray*}
%\nonumber
\|f^R_\#\|_{H^{n}_{k+l}(\cD_L)}\,=\,\|f^R\|_{H^{n}_{k+l}(\R^3)}
  &\le& \|f\|_{H^{n}_{k+l}(\R^3)}+\|f-f^R\|_{H^{n}_{k+l}(\R^3)}
  \\
  %\label{fRpestimes}
  &\le& \, M+1\;\ls\; M.
\end{eqnarray*}
Therefore, it yields that
\begin{eqnarray*}
  |\cI_{112}| &\ls&
                    \frac{N^2}{R^{3/2}}\,\|(\cP_N-{\rm Id})(f^R_\#\psi^R)\|_{L^2}\;
                    \|G_N\|_{L^2}^2
                    \\
                    &\ls&
                          \frac{R^{n-3/2}}{N^{n-2}}\,\|f^R_\#\|_{H^n}\,
                          \|g_\#\|_{L^2}^2,
\end{eqnarray*}
and
\begin{eqnarray*} 
 |\cI_{113}|&\ls& \frac{N^2}{R^{3/2}} \,\|F_N\|_{L^2}\,\|G_N\|_{L^2}\,
                \|G_N-G_N'\|_{L^2}
  \\
  &\ls& \frac{R^{n-3/2}}{N^{n-2}}\,\|f^R_\#\|_{L^2}\;\|g_\#\|_{L^2}\;\|f^R_\#\|_{H^n}
  \\
            &\ls& \frac{R^{2n-7/2}}{N^{2n-4}}\;+\, R^{1/2}\;\|f^R_\#\|_{L^2}^2\,\|f^R_\#\|_{H^n}^2\;\|g_\#\|_{L^2}^2.
\end{eqnarray*}
Using the fact that $\|f^R_\#\|_{L^2}\ls M$, we are led to 
\begin{eqnarray*}
\cI_{11}&\ls&   \left( R^{1/2}\,+\,\frac{R^{n-3/2}}{N^{n-2}}\right) \,\left(\|f^R_\#\|_{H^n}\,+\,\|f^R_\#\|_{H^n}^4\right)\;\|g_\#\|_{L^2}^2\,+\; \frac{R^{2n-7/2}}{N^{2n-4}}
  \\
        &\ls&
              \left(R^{1/2}+\frac{R^{n-3/2}}{N^{n-2}}\right)\;M^4\,\|g_\#\|_{L^2}\;+\;
              \frac{R^{2n-7/2}}{N^{2n-4}}.
\end{eqnarray*}

We now estimate  $\cI_{12}$ and $\cI_{13}$ again by
\eqref{thm:QpgPNh}, similarly to $\cI_{112}$ and $\cI_{113}$, it gives that
\begin{eqnarray*}
|\cI_{12}|\;=\;|\br{\cQ_\#(G_N,F_N),G'_N}| &\ls&
                                                 R^{1/2}\;\|G_N\|_{L^2}\;\|F_N\|_{H^2}\;\|G'_N\|_{L^2}
  \\
  &\le& R^{1/2}\;\|g_\#\|_{L^2}^2\;\|f^R_\#\psi^R\|_{H^2},
\end{eqnarray*}
and
\begin{eqnarray*}
|\cI_{13}|\;=\; |\br{\cQ_\#(G_N,G_N),G'_N}| &\ls&
                                                  \frac{N^{2}}{R^{3/2}}\,\|G_N\|_{L^2}^2\,\|G_N'\|_{L^2}^2
  \\
                                            &\le&  \frac{N^{2}}{R^{3/2}}\;\|g_\#\|_{L^2}^3.
\end{eqnarray*}
Gathering the latter estimates, we derive that

$$
\cI_1 \;\ls_{M,n,k,l}\; \left( R^{1/2}\;+\;\frac{R^{n-3/2}}{N^{n-2}}\right)\;\|g_\#\|_{L^2}^2\;+\; \frac{R^{2n-7/2}}{N^{2n-4}}\;+\;\frac{N^{2}}{R^{3/2}}\;\|g_\#\|_{L^2}^3.
$$

To evaluate $\cI_2$, we  We apply the following decomposition
\begin{eqnarray*}
\cI_2& =& \ds\cQ^R_N(f^R_\#,f^R_\#)-\cQ_\#(f^R_\#\psi^R,
          f^R_\#\psi^R)\;\psi^R
  \\[0.85em]
  &=&\ds\cP_N(\cP_N(\cQ_\#(F_N,F_N))\,\psi^R)-\cQ_\#(f^R_\#\psi^R,
      f^R_\#\psi^R)\;\psi^R
  \\[0.85em]
&=& \ds\left(\cP_N-{\rm Id}\right)(\cP_N(\cQ_\#(F_N,F_N))\,\psi^R)
    \;+\;\left(\cP_N-{\rm Id}\right)(\cQ_\#(F_N,F_N))\, \psi^R
  \\[0.85em]
     &+&\ds\cQ_\#((\cP_N-{\rm Id})(f^R_\#\psi^R),F_N)\;\psi^R\;+\;\cQ_\#\left(f^R_\#\psi^R,\left(\cP_N-{\rm Id}\right)(f^R_\#\psi^R)\right)\;\psi^R.
\end{eqnarray*}

Using \eqref{eq:1-Nf} and \eqref{eq:Qpgh} of Lemma \ref{thm:Qpghf}, we get that
\begin{eqnarray*}
\|\cI_2\|_{L^2} &\ls& \left(\frac{R}{N}\right)^{n-2} \;\|\cQ_\#(F_N,F_N)\|_{H^{n-2}}
  \;+\;
                      \left(\frac{R}{N}\right)^{n-2}\;\|\cQ_\#(F_N,F_N)\|_{H^{n-2}}
  \\[0.85em]
  &+& R^{1/2}\;\|\left(\cP_N-{\rm Id}\right)(f^R_\#\psi^R)\|_{L^2}\,\|F_N\|_{H^2}
                      \,+\;                          \frac{1}{R^{3/2}}\;\|f^R_\#\psi^R\|_{L^2}\;\|(\cP_N-{\rm Id})(f^R_\#\psi^R)\|_{L^2}
  \\[0.85em]
  &+& R^{1/2}\;\|f^R_\#\psi^R\|_{L^2}\;\|(\cP_N-{\rm Id})(f^R_\#\psi^R)\|_{\dot{H}^2}
  \\[0.85em]
&\ls&
      \frac{R^{n-3/2}}{N^{n-2}}\;\|f^R_\#\|_{H^n}^2\;+\;\frac{R^{n-3/2}}{N^{n-2}}\;\|f^R_\#\|_{H^{n-2}}\;\|F_N\|_{H^2}\;+\;\frac{R^{n-3/2}}{N^{n-2}}\;\|f^R_\#\|_{L^2}\;\|f^R_\#\|_{H^n}
  \\[0.85em]
                &\ls& \frac{R^{n-3/2}}{N^{n-2}}\;\|f^R_\#\|_{H^n}^2\;\ls\; \;\frac{R^{n-3/2}}{N^{n-2}}\;M^2,
\end{eqnarray*}
where we use the fact
$$
\|\cQ_\#(F_N,F_N)\|_{H^{n-2}} \;\ls\; R^{1/2}\;\|F_N\|_{H^n}^2\;\ls\;
R^{1/2}\;\|f^R_\#\|_{H^n}^2,
$$
which follows from the Leibniz rule and \eqref{eq:Qpgh} of Lemma
\ref{thm:Qpghf}. This implies that 
$$
|\cI_2| \;\ls\;    \frac{R^{n-3/2}}{N^{n-2}}\;\|g_\#\|_{L^2} \;\ls\,
\frac{R^{2n-7/2}}{N^{2n-4}} \;+\, R^{1/2}\;\|g_\#\|_{L^2}^2.
$$
Therefore, we are led to 
$$
\f{\dD}{\dD t}\|g_\#\|_{L^2}^2\ls_{M,n,k,l} \left( R^{1/2}\;+\;\frac{R^{n-3/2}}{N^{n-2}}\right)\;\|g_\#\|_{L^2}^2\;+\; \frac{R^{2n-7/2}}{N^{2n-4}}\;+\; \frac{N^{2}}{R^{3/2}}\;\|g_\#\|_{L^2}^3.
$$
 Choosing $N$  sufficiently large such that $N>R$,  it  yields
$$
\f{\dD}{\dD t}\|g_\#\|_{L^2}^2\;\le\; \kappa\left( R^{1/2}\|g_\#\|_{L^2}^2\;+\;\frac{R^{2n-7/2}}{N^{2n-4}}\;+\;\frac{N^{2}}{R^{3/2}}\;\|g_\#\|_{L^2}^3\right),
$$
where $\kappa>0$ is a constant  depending only on $M$, $n$, $k$ and $l$. In this situation, the initial data satisfies
\begin{eqnarray*}
\|g_\#(0)\|_{L^2} &=& \|\left({\rm
                        Id}-\cP_N\right)\;f^R_\#(0)\|_{L^2}
  \\
  &\ls&
        \left(\frac{R}{N}\right)^{n-2}\;\|f^R_\#(0)\|_{H^{n-2}}\\
  &\ls&\left(\frac{R}{N}\right)^{n-2}\;M.
\end{eqnarray*}
Thus we may choose $N$ large enough such that  $$
\|g_\#(0)\|_{L^2}^2\le\; C_0\;\left(\frac{R}{N}\right)^{2n-4}
\;\ll\; 1,
$$
 where $n\ge5$ and $C_0>1$ is a constant  depending only on $M$, $n$,
 $k$ and $l$.  By continuity, let
 $$
 T_*:=\sup\left\{ t\in[0; T], \quad
   \sup_{s\in[0,t]}\frac{N^2}{R^{3/2}}\;\|g_\#(s)\|_{L^2}\;\le\; R^{1/2} \right\},
 $$
 hence on the time interval $[0;T_*]$; we  get that
$$
\f{\dD}{\dD t}\|g_\#\|_{L^2}^2\;\le\; 2\;\kappa \,R^{1/2}\;\|g_\#\|_{L^2}^2\;+\;\kappa\; \frac{R^{2n-7/2}}{N^{2n-4}}.
$$
From this together with Gronwall’s inequality, we derive that
\begin{eqnarray*}
\|g_\#(t)\|_{L^2}^2&\le& \left( \|g_\#(0)\|_{L^2}^2\;+\;\kappa
                      \;\frac{R^{2n-7/2}}{N^{2n-4}}\;\int_0^te^{-2\;\kappa
                      \;R^{1/2}\;s}\,\dD s\right) \; e^{2\kappa R^{1/2}\;t}
  \\[0.85em]
                &\le& \CC \,\left(\frac R N\right)^{2n-4}\;e^{2\;\kappa \;R^{1/2}\;t},\quad \forall \;t\;\in\;[0,T_*],
\end{eqnarray*}
with   $\CC:=1/2+C_0$.
Finally, to ensure that $T^*=T$, we need to choose $N$ big enough such that
\beq
\CC\,\left(\frac R N\right)^{2n-4}\;e^{2\;\kappa \;R^{1/2}\;t}\;\le\; \left(\frac{R}{N}\right)^4\,\le\; 1,
\eeq
which is satisfied as soon as  $N\,>\,R\,\NN(R)$ with
$\NN(R)\;=\;\rr{\CC\; e^{\kappa \;R^{1/2}\;T}}^{\f{1}{n-4}}$.
This implies the desired result \eqref{error2} and then completes the proof.

\section{Numerical simulations}
\label{sec:numerical}
\setcounter{equation}{0}
\setcounter{figure}{0}
\setcounter{table}{0}

In this section, we perform numerical simulations using the Fourier
method \eqref{eq:trfp}. We apply a fourth-order Runge-Kutta scheme for time discretization, and the function $\psi^R$ is given by
$$
\psi^R(v) \,=\,
\left\{
  \begin{array}{l}
 1, \,  \ds {\rm if}\,\;  |v| \,<\, 0.9\, R\\[0.8em]
        \ds 10\;\left(1-\frac{|v|}{R}\right), \,  \ds{\rm if}\,\; 0.9 \;R \,\leq\;
|v|\; \leq\; R \\[0.8em]
  0, \,\ds  {\rm if}\,\;  |v| > R.
  \end{array}\right.
$$
First, we address the case of Maxwellian molecules where $\gamma=0$,
for which we compare the approximate solution with an explicit
solution. Second, we address the Coulomb case, which corresponds to
our study, and observe the long-time behavior of the approximate
solution through relative entropy, high-order moments, and Fisher
information.

%%%%%%%%%%%%%%%%%%%%%%%%%%%%%%%%%%%%%%%%%%%%%%%%%%%%%%%%%%%%%%%%%%%%%%%%%%%%%%%%%%%%%%%%%%%%%%%%%%%%%%%%%%%%%%%%
%
%
%
%%%%%%%%%%%%%%%%%%%%%%%%%%%%%%%%%%%%%%%%%%%%%%%%%%%%%%%%%%%%%%%%%%%%%%%%%%%%%%%%%%%%%%%%%%%%%%%%%%%%%%%%%%%%%%%%
\subsection{Maxwellian interactions ($\gamma=0$)}

We first consider the case of Maxwellian molecules $\gamma=0$ in
\eqref{1}-\eqref{eq:aij}. Even if this case is not directly covered by our
study, it has some mathematical interest since explicit solutionsnare
available. Indeed, we choose $f_0$ as
$$
f_0(v) \,=\;  \frac{1}{(2\pi\; S_0)^{3/2}}\; \frac{1-S_0}{2\,S_0^2}\,|v|^2 \;\exp\left(-\frac{|v|^2}{2\,S_0^2}\right),
$$
with $S_0=0.6$ where the exact solution is given by
$$
f(t,v) \,=\; \frac{1}{(2\pi \;S)^{3/2}} \, \left(\frac{5\,S-3}{2S} + \frac{1-S}{2\,S^2}\,|v|^2 \right)\,\exp\left(-\frac{|v|^2}{2\,S}\right)\,,
$$
with $S(t)\,=\,1-0.4\,\exp(-4\; t)$.

We compute the  norms of the error  as
$$
\left\{
  \begin{array}{l}
    \ds e_1(t) = \sum_i |e_i(t)|,
    \\
    \ds e_2(t) = \left(\sum_i |e_i|^2\right)^{1/2},
  \end{array}
\right.
$$
where $e_i(t)=|f(t,v_i) - f^{R,N}_\#(t,v_i)|$ and $f^{R,N}_\#(t)$ is
the approximated solution.

We conducted a series of numerical calculations by varying the two
parameters, the length of the domain $2L$ and the number of modes per
direction $N$. First, we present the results when the
number of modes is large, that is, $N \gg 1$, to illustrate the evolution of the error (on
a logarithmic scale) as a function of $L$ in Figure \ref{fig:1}. As
it is proved in Theorem \ref{thm:mainresult}, for a regular solution,
the order of accuracy increases as  $L$ increases. Specifically, when
$N=48$, the error decreases very rapidly when  $L$ is increasing, up to $L \leq 11$. Beyond this value, the error begins to grow, likely due to the insufficient  number of modes  $N$. Conversely, when $N=56$, the
error in $L$ starts to saturate only when $L >12$. 

Finally, it is worth noting that for this initial data and Maxwellian molecules
$\gamma=0$, the error increases over time as $t$ becomes
large. However,
this growth is only polynomial with respect to $t$, suggesting that our
current estimation for Coulombian potential may not be optimal. 

\begin{figure}[ht!]
\begin{center}
\begin{tabular}{cc}
  \includegraphics[width=8.cm]{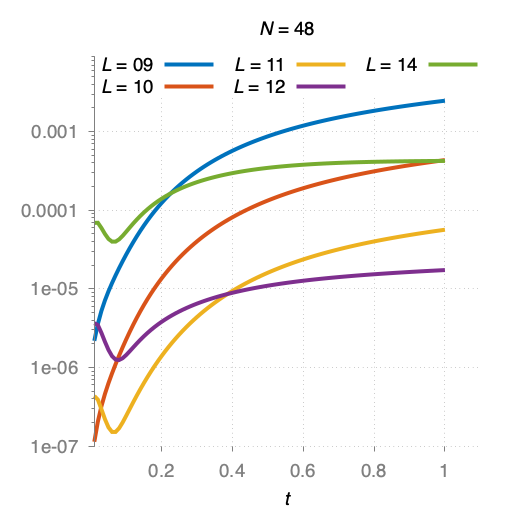} &
  \includegraphics[width=8.cm]{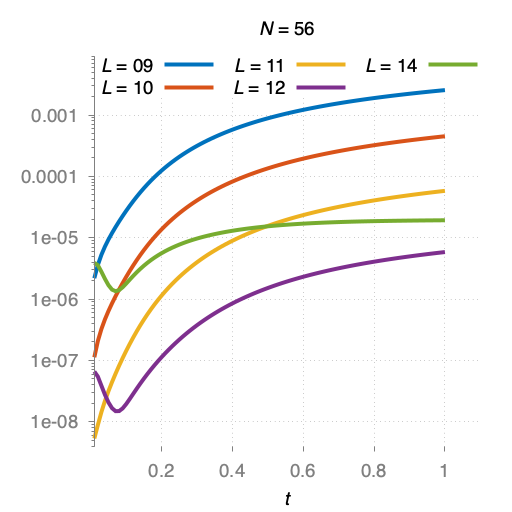}  
\end{tabular}
  \end{center}
\caption{{\bf Maxwellian interactions:} time evolution of  the $L^2$
  error norm in log scale for
  the scheme \eqref{eq:fRn} with respect to $L$ for $N=48$ (left) and $N=56$ (right).}
\label{fig:1}
\end{figure}

In the second part, we consider a sufficiently large domain $\cD_L$
and focus on the error with respect to the number of Fourier modes per
direction $N$. We then plot the evolution
of the error (still on a logarithmic scale) for $L=12$ and
$L=14$ in Figure \ref{fig:2}. First, we observe an increase of  the order of accuracy for
$L=12$ when $N<48$, followed by a saturation of the error for $N=56$. When $L=14$,
this phenomenon is no longer visible for $N\leq 64$. These results
suggest that our error estimate on spectral accuracy, which depends on the parameter $L/N$, is indeed relevant.

\begin{figure}[ht!]
\begin{center}
  \begin{tabular}{cc}
   \includegraphics[width=8cm]{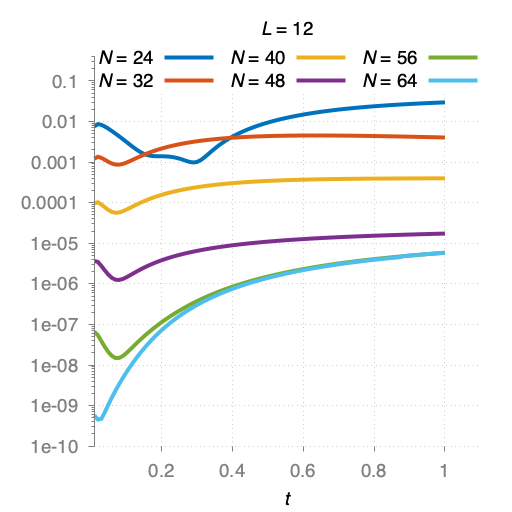} &
   \includegraphics[width=8cm]{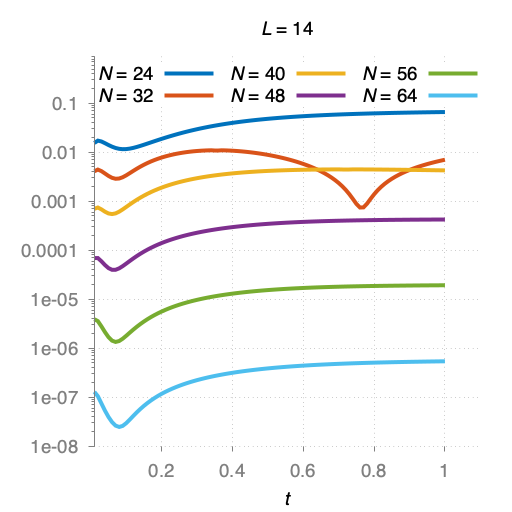}
 \end{tabular}
  \end{center}
\caption{{\bf Maxwellian interactions:} time evolution of  the $L^2$
  error norm in log scale  for
  the scheme \eqref{eq:fRn} with respect to $N$ for $L=12$ (left) and $L=14$ (right).}
\label{fig:2}
\end{figure}

To illustrate this, we plot the maximum error over the interval
$[0,1]$ as a function of the parameter $L/N$ in Figure
\ref{fig:3}. We observe that the order of accuracy increases
exponentially fast, which indicates spectral accuracy. Finally, it is
important to note that the error does not appear to grow as quickly as
suggested by our estimate (Theorem \ref{thm:mainresult}).

\begin{figure}[ht!]
\begin{center}
\includegraphics[width=10cm]{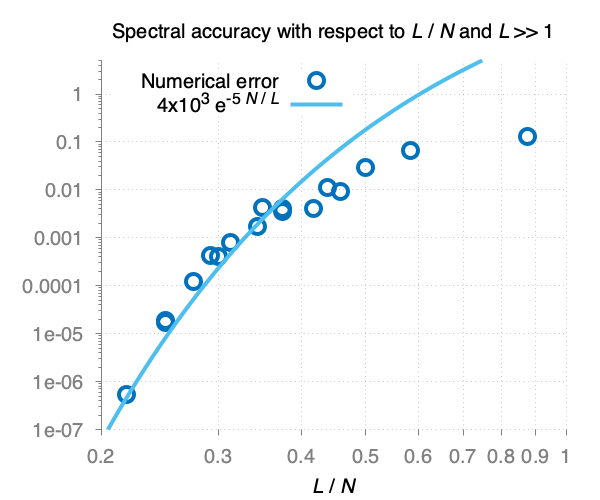}       
  \end{center}
\caption{{\bf Maxwellian interactions:}    error with respect to $L/N$,
  which illustrates the spectral accuracy when $L\ll N$.}
\label{fig:3}
\end{figure}

%%%%%%%%%%%%%%%%%%%%%%%%%%%%%%%%%%%%%%%%%%%%%%%%%%%%%%%%%%%%%%%%%%%%%%%%%%%%%%%%%%%%%%%%%%%%%%%%%%%%%%%%%%%%%%%% 
%
%
%
%%%%%%%%%%%%%%%%%%%%%%%%%%%%%%%%%%%%%%%%%%%%%%%%%%%%%%%%%%%%%%%%%%%%%%%%%%%%%%%%%%%%%%%%%%%%%%%%%%%%%%%%%%%%%%%%%

\subsection{Coulombian interactions ($\gamma=-3$)}
We choose $\gamma=-3$, which corresponds to the current analysis and
is the physically relevant case.   The initial condition is
$$
f_0(v) \,=\,
\frac{1}{S^2} \exp\left( -S\,\frac{(|v| - \sigma)^2}{\sigma^2}\right),
$$
 with $\sigma = 0.3$ and $S = 10$ and the integration time is $T
 =50$ with $\Delta t = 0.05$ with $N=48$ and $64$ in the computational domain $[-1.8,1.8]^3$.

 This test is used to compute the time evolution of the numerical solution \eqref{eq:fRn} and to compare the
 results with those obtained in \cite{PRT} and with a reference
 solution obtained with $N=128$ and a small time step.
 
We compute the time evolution of the distribution function and present
the numerical results in  Figures \ref{fig:t2-1} and \ref{fig:t2-2} where
computations were performed respectively with  $N=48$ and
$64$  collocation points and $L=1.8$.

On the one hand, we present in Figure \ref{fig:t2-1}, the time
evolution of the  entropy $\cH(f)$  and the fourth order moment of the distribution function
$$
M_4(t) = \int_{\R^3} f(t,v) \,\dD v.
$$
We also present the trend to equilibrium using the relative entropy
$\cH(f\,|\,\mu_f)$ and the $L^2$ norm of $f(t)-\mu_f$ where $\mu_f$ is
the Maxwellian equilibrium. 

We compare these quantities  with a reference solution obtained with
$N=128$ collocation points. We observe that with only $N=48$ in
each direction, we get an accurate solution on a large time intervall.  

\begin{figure}[ht!]
\begin{center}
  \begin{tabular}{cc}
  \includegraphics[width=8cm]{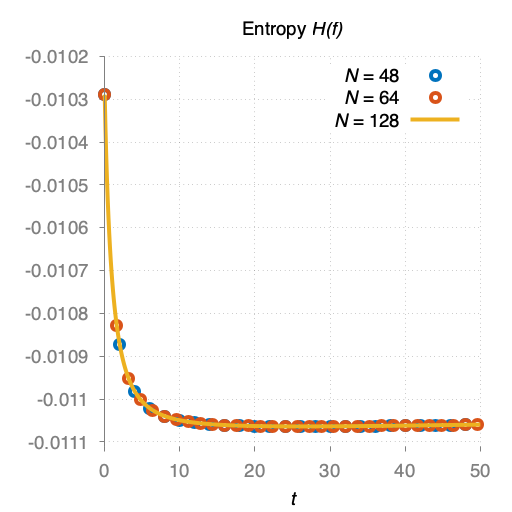} &
  \includegraphics[width=8cm]{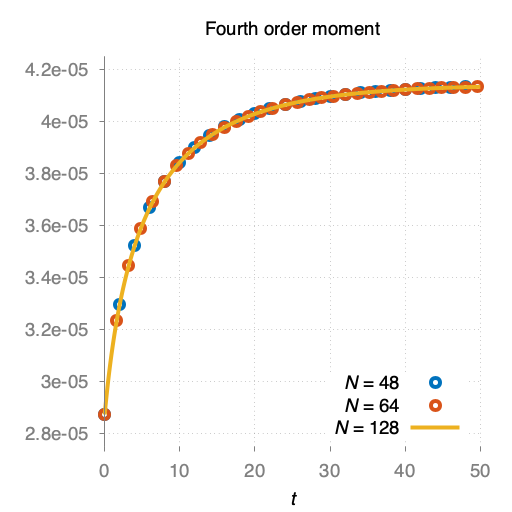}
      \\
    (a) $\cH(f)$ & (b) $M_4(f)$ \\
                   \includegraphics[width=8cm]{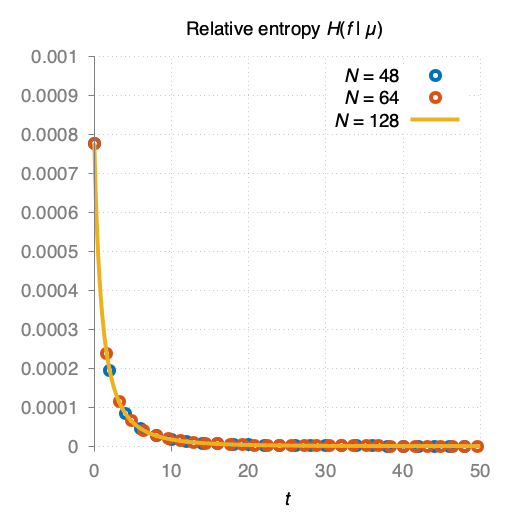} &
  \includegraphics[width=8cm]{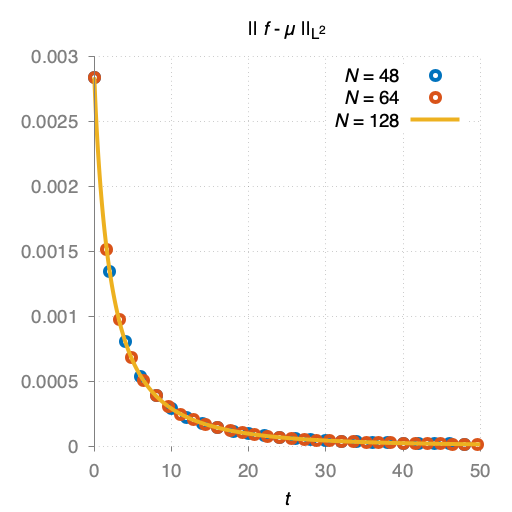}
      \\
 (c) $\cH(f\,|\,\mu_f)$ & (d) $\| f\,-\,\mu_f \|_{L^2}$  
\end{tabular}
  \end{center}
\caption{{\bf Coulombian interactions :} time evolution of the (a) 
  entropy $\cH(f)$, (b) fourth order moment $M_4$, (c) relative
  entropy $\cH(f\,|\;\mu_f)$ and (d) $L^2$ norm of $f\,-\,\mu_f$.}
\label{fig:t2-1}
\end{figure}

On the other hand, in Figure \ref{fig:t2-2} , we show the cross
section of the distribution function at times $t = 0$, $1$, $2$, $3$, $6$ and
$50$. The results are in good agreement with those presented in
\cite{PRT} even if the time scale does not corresponds since we
take all physical constants equal to one. Finally, Figure
\ref{fig:t2-2}-(b) represents the time evolution of the Fisher
information, which is known to be a Lyapunov functionnal for the
Landau equation \cite{GS}.

\begin{figure}[ht!]
\begin{center}
  \begin{tabular}{cc}
  \includegraphics[width=8.cm]{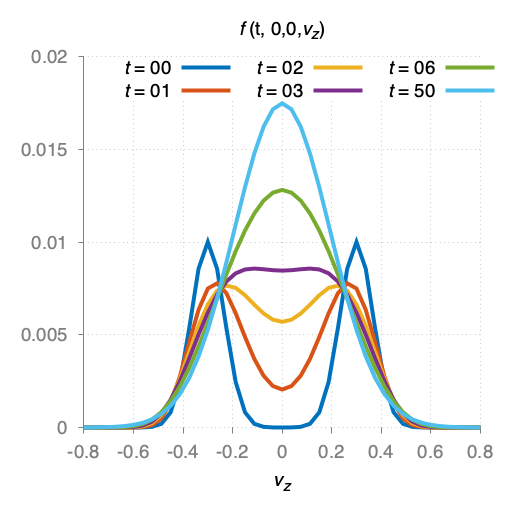} &
  \includegraphics[width=8.cm]{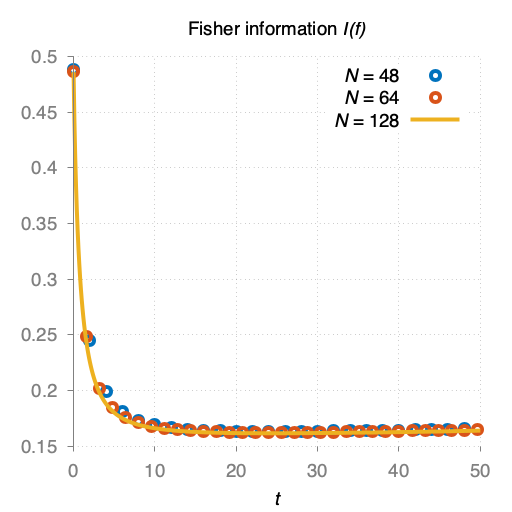}
   \\
   (a) & (b) 
\end{tabular}
  \end{center}
\caption{{\bf Coulombian interactions :} (a) time evolution the distribution
  function $f(t,0,0,v_z)$ for  $N=64$ at
  time $t = 0$,  $0.5$, $ 1$, $1.5$ and $3$ (b) time evolution
  of the Fisher information $\cI(f)$.}
\label{fig:t2-2}
\end{figure}

\section*{Acknowledgement}
The research of L.-B. He was supported by NSF of China under Grant No.11771236 and New Cornerstone Investigator Program 100001127.

\appendix
\section{Analytic results on the equation}
\label{sec:appendix1}
\setcounter{equation}{0}
\renewcommand\theequation{A\arabic{equation}}
\setcounter{figure}{0}
\setcounter{table}{0}

We begin with sharp bounds for the collision operator \cite[Proposition 2.1]{HJL}
 
\begin{thm}\label{coe}
Suppose that the nonnegative function $g$ satisfies $\|g\|_{L^1}>\delta,\|g\|_{L_2^1}\;+\;\cH(g)<\lambda$. Then there exists a constant $C(\delta, \lambda)$ such that
\ben
\|\nabla f\|_{L_{-3 / 2}^2}^2\;+\;\|\left(-\Delta_{\mathbb{S}^2}\right)^{\frac{1}{2}} f\|_{L_{-3 / 2}^2}^2 \;\le\; C(\delta, \lambda)\; \int(a * g): \nabla f \otimes \nabla f \dD v.
\een
\end{thm}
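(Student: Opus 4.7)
The inequality is an anisotropic coercivity estimate for the Landau diffusion matrix $\bar a(v) := (a\ast g)(v)$ in the Coulomb case, where the anisotropy exactly matches the natural splitting $\nabla f = (\nabla f\cdot \hat v)\hat v + P_v^\perp \nabla f$ with $\hat v = v/|v|$. My plan has three main steps.

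\textbf{Step 1 (extraction of a base set).} Using the assumptions $\|g\|_{L^1}>\delta$ and $\|g\|_{L^1_2}+\cH(g)<\lambda$, I would apply a Dunford--Pettis / de la Vallée-Poussin type argument to produce constants $R_0,\eta_0,\mu_0>0$ depending only on $\delta,\lambda$, and a measurable set $\Omega\subset B(0,R_0)$ with $|\Omega|\ge \mu_0$ on which $g\ge \eta_0$. The moment bound confines most of the mass to a large ball, the entropy bound prevents $g$ from concentrating on sets of arbitrarily small Lebesgue measure, and combined with the $L^1$ lower bound this yields a pointwise lower bound on a set of positive measure.

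\textbf{Step 2 (pointwise ellipticity of $\bar a$).} The key claim is that for every $v\in\R^3$ and $\xi\in\R^3$,
$$\bar a(v):\xi\otimes\xi \;\ge\; c_0\left(\langle v\rangle^{-3}|\xi\cdot\hat v|^2 + \langle v\rangle^{-1}|P_v^\perp \xi|^2\right),$$
with $c_0=c_0(\delta,\lambda)>0$. Using Proposition~\ref{thm:pizpq} in the form $a(v-v_*):\xi\otimes\xi = |(v-v_*)\times\xi|^2/|v-v_*|^3$ for $\gamma=-3$, I would bound the convolution from below by $\eta_0\int_\Omega |(v-v_*)\times\xi|^2|v-v_*|^{-3}\,\dD v_*$. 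For $|v|$ large and $v_*\in\Omega$ bounded, $|v-v_*|\sim |v|$, and one decomposes $\xi$ into its components parallel and perpendicular to $v$: if $\xi\parallel v$ then $|(v-v_*)\times\xi|=|v_*\times\xi|$ is $O(|\xi|)$, giving the $\langle v\rangle^{-3}$ eigenvalue; if $\xi\perp v$ then $|(v-v_*)\times\xi|\sim |v||\xi|$, giving the $\langle v\rangle^{-1}$ eigenvalue. The bounded-$|v|$ regime is handled by a compactness/continuity argument and the nondegeneracy of the convolution integral.

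\textbf{Step 3 (conversion to the target norm).} Applying Step 2 pointwise with $\xi=\nabla f(v)$ and integrating yields
$$\int (a\ast g):\nabla f\otimes\nabla f\,\dD v \;\ge\; c_0\int\left(\langle v\rangle^{-3}|P_v\nabla f|^2 + \langle v\rangle^{-1}|P_v^\perp\nabla f|^2\right)\dD v.$$
Since $|\nabla f|^2=|P_v\nabla f|^2+|P_v^\perp\nabla f|^2$ and $\langle v\rangle^{-3}\le\langle v\rangle^{-1}$, the right-hand side controls $\|\nabla f\|_{L^2_{-3/2}}^2$. For the angular part, the identity $|v\times\nabla f|^2=|v|^2|P_v^\perp\nabla f|^2$ (which by definition equals $|(-\Delta_{\SS^2})^{1/2}f|^2$) and the bound $|v|^2\langle v\rangle^{-3}\le\langle v\rangle^{-1}$ give $\|(-\Delta_{\SS^2})^{1/2}f\|_{L^2_{-3/2}}^2\le \int\langle v\rangle^{-1}|P_v^\perp\nabla f|^2\,\dD v$, again dominated by the right-hand side.

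The main obstacle is Step 2, the quantitative anisotropic pointwise bound. Step 1 is a soft classical argument, but making the constants effective in $(\delta,\lambda)$ and then carrying through the integral geometry uniformly in the direction of $v$ and for moderate $|v|$ (where neither asymptotic regime applies and one cannot simply read off the scaling) is the crux; this is precisely what is done in [HJL, Prop.~2.1], whose argument I would follow.
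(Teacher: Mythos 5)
Your proposal is correct and takes essentially the same route as the paper, which in fact gives no proof of Theorem \ref{coe} but imports it directly from \cite[Proposition 2.1]{HJL}: your three-step scheme (extraction via mass/entropy bounds of a set where $g$ is bounded below, the anisotropic pointwise lower bound $a\ast g\gtrsim \langle v\rangle^{-3}$ radially and $\langle v\rangle^{-1}$ tangentially, then integration using $|v\times\nabla f|^2=|v|^2|P_v^\perp\nabla f|^2$ and $|v|^2\langle v\rangle^{-3}\le\langle v\rangle^{-1}$) is precisely the standard argument behind that proposition. The only caveat is in Step 2, which you correctly identify as the crux and defer to \cite{HJL}: there one needs $|v_*\times\xi|\gtrsim|\xi|$ from below on a positive-measure subset of $\Omega$ (not merely $O(|\xi|)$), and the estimate must be established as a full matrix inequality so that general (mixed) directions $\xi$ are covered, not only $\xi$ parallel or perpendicular to $v$.
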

and \cite[Proposition 2.2]{HJL}.
\begin{prop}\label{UpperboundofQ}
  \begin{itemize}
    \item[(i).] Let $a_1$, $b_1\in[0,2]$, $\omega_1$, $\omega_2\in\R$, satisfying that $a_1+b_1=2$, $\omega_1+\omega_2=-1$. Then
      \ben
      \label{uppersob}
      \nr{\br{\cQ(g,h),f}}\;\ls\;
      \rr{\|g\|_{L^1_1}\;+\;\|g\|_{L^2_1}}\|h\|_{H^{a_1}_{\omega_1}}\|f\|_{H^{b_1}_{\omega_2}}.
      \een
    \item[(ii).] Let $a_2$, $b_2\in[0,1]$, $\omega_i\in\R$, $i=1$,
      $2$, $3$ and $4$, satisfying that $a_2+b_2=1$,
      $\omega_1+\omega_2=-3$ and $\omega_3+\omega_4=-2$. Then we have
      \ben
      \label{upperan}
    \nr{\br{\cQ(g,h),f}}&\ls_{\sss}&\rr{\|g\|_{L^1_3}\;+\,\|g\|_{L^2_{3}}}
    \;\rr{\|\rr{-\Delta_{\S^2}}^{\f{1}{2}}h\|_{L^2_{\omega_1}}\;+\;\|h\|_{H^1_{\omega_1}}}\rr{\|\rr{-\Delta_{\S^2}}^{\f{1}{2}}f\|_{L^2_{\omega_2}}\;+\;\|f\|_{H^1_{\omega_2}}}\notag
    \\
    &+& \LLO{g}\|h\|_{H^{a_2}_{\omega_3}}\;\|f\|_{H^{b_2}_{\omega_4}}.
    \een 
  \end{itemize}
\end{prop}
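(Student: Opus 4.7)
\medskip

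\noindent\textbf{Proof proposal.} The plan is to start from the weak (integration-by-parts) formulation
\[
\br{\cQ(g,h),f} \,=\, -\int_{\R^3}(a*g):(\nabla h\otimes \nabla f)\,\dD v \,+\, \int_{\R^3}(b*g)\cdot\nabla f\; h\,\dD v,
\]
where $b_i=\partial_j a_{ij}=-2z_i|z|^{\gamma}$ for $\gamma=-3$, so that after fully unfolding the convolutions we are left with the two model integrals
\[
\cT_1 \,:=\, \iint \frac{\Pi(v-v_*)\nabla h(v)\cdot \nabla f(v)}{|v-v_*|}\,g(v_*)\,\dD v\,\dD v_*,
\qquad
\cT_2 \,:=\, \iint \frac{|h(v)\,\nabla f(v)|}{|v-v_*|^2}\,|g(v_*)|\,\dD v\,\dD v_*.
\]
Using \eqref{Pizpq} of Proposition \ref{thm:pizpq}, the projection part of $a$ can be rewritten as $\Pi(z)p\cdot q=|z|^{-2}(z\times p)\cdot(z\times q)$, which turns Cartesian gradients $\nabla h$, $\nabla f$ into spherical derivatives around $v_*$, a fact that will be essential for part (ii). The singular kernels are thus $|v-v_*|^{-1}$ for $\cT_1$ and $|v-v_*|^{-2}$ for $\cT_2$.

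For part (i), the natural starting point is the endpoint $(a_1,b_1)=(1,1)$: applying the weighted Hardy--Littlewood--Sobolev inequality of Theorem \ref{Kalpha}-(3) with $\alpha=1$, $p=2$, and exponents $\ell_j$, $n_j$ chosen so that $\ell_0+\ell_1+\ell_2$ and $n_0+n_1+n_2$ equal the cumulative weight budget $\omega_1+\omega_2=-1$, controls $\cT_1$ by $(\|g\|_{L^1_1}+\|g\|_{L^2_1})\|\nabla h\|_{L^2_{\omega_1}}\|\nabla f\|_{L^2_{\omega_2}}$, which is exactly the $(a_1,b_1)=(1,1)$ bound. For $\cT_2$, Theorem \ref{Kalpha}-(2) with $\alpha=2$ and the same kind of weight split gives the analogous bound, but now the singular factor $|v-v_*|^{-2}$ is just barely integrable in dimension $3$, so I will need $H^{1/4}$-type smoothing via the $\sigma_1,\sigma_2$ parameters. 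Then to reach the endpoints $(a_1,b_1)=(2,0)$ and $(0,2)$ I would integrate by parts once more, pushing $\nabla$ through $a*g$ (producing an extra $b*g$ and, in Coulomb case, a local term from $c*g = -8\pi g$), and apply the same HLS estimates; finally, complex interpolation between the three endpoint cases $(2,0)$, $(1,1)$, $(0,2)$ delivers \eqref{uppersob} for every $(a_1,b_1)\in[0,2]^2$ with $a_1+b_1=2$.

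For part (ii), the rewriting via \eqref{Pizpq} makes it natural to decompose $\nabla h(v)=\hat n\,(\hat n\cdot\nabla h)+\nabla_{\!\perp,\hat n} h$ with $\hat n=(v-v_*)/|v-v_*|$, and similarly for $f$. Because $\Pi(v-v_*)$ kills the radial part along $\hat n$, only the \emph{perpendicular} components survive in $\cT_1$, and their size is exactly captured by the spherical gradient $(-\Delta_{\S^2})^{1/2}$ acting around $v_*$. This replaces one factor $|\nabla h|$ by $(|(-\Delta_{\S^2})^{1/2}h|+|h|)/|v-v_*|$, producing an extra $|v-v_*|^{-1}$ in $\cT_1$, hence an effective kernel $|v-v_*|^{-3}$; but the anisotropic smoothing from $(-\Delta_{\S^2})^{1/2}$, combined with Theorem \ref{Kalpha}-(2) at $\alpha=3$ and the more generous weight budget $\omega_1+\omega_2=-3$, gives precisely the first summand of \eqref{upperan}. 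The $\cT_2$ piece carries only one derivative, so the same spherical decomposition and Theorem \ref{Kalpha}-(3) applied with $\alpha=2$ (or equivalently, a direct HLS estimate with weight budget $-2$) produce the $\|g\|_{L^1}\|h\|_{H^{a_2}_{\omega_3}}\|f\|_{H^{b_2}_{\omega_4}}$ summand, and the constraint $a_2+b_2=1$ is again filled in by interpolating between the endpoints $(1,0)$ and $(0,1)$.

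The principal obstacle will be the Coulomb singularity in $\cT_2$: the kernel $|v-v_*|^{-2}$ is critical for $L^2(\R^3)$-based inequalities, so one cannot afford to use pure HLS and must borrow a small fractional derivative from each of $h$ and $f$, which is why the statement forces the use of $(-\Delta_{\S^2})^{1/2}$ and $H^1$ norms rather than plain $L^2$ norms in part (ii), and why in part (i) one must allow the slightly non-sharp combination $\|g\|_{L^1_1}+\|g\|_{L^2_1}$ (the $L^2_1$ component is there to absorb the logarithmic/critical loss when the HLS exponent is saturated). Distributing the weights $\omega_1+\omega_2$ correctly in the two integrals, so that every factor $\langle v\rangle^{\omega_i}$ or $\langle v_*\rangle$ is matched against a genuinely integrable profile after applying Theorem \ref{Kalpha}, is the book-keeping step where most of the technical work sits; once that is done, interpolation and the standard Leibniz rule finish the proof.
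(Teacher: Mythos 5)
First, a point of comparison: the paper does not prove this proposition at all; it is quoted directly from \cite[Proposition 2.2]{HJL} (just as Theorem \ref{coe} is \cite[Proposition 2.1]{HJL}), so there is no internal proof to measure your argument against, and your attempt must stand on its own. Judged that way, your part (i) is a reasonable sketch: the weak formulation, Theorem \ref{Kalpha}-(3) with $\alpha=1$ for the $\Pi$-part, Theorem \ref{Kalpha}-(2) with $\alpha=2$ and $\sigma_1+\sigma_2=1/2$ (so $p=2$) for the $b*g$-part, the extra integration by parts producing the local term $c*g=-8\pi g$ at the endpoint $(a_1,b_1)=(2,0)$, and bilinear complex interpolation in the pair $(h,f)$ can plausibly be assembled into a proof, modulo details you leave implicit (the weighted commutator equivalences as in Lemma \ref{WN2lHNl}, and an analytic-family argument justifying interpolation simultaneously in the regularity and weight indices).

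The genuine gap is in part (ii). You replace $\nabla h$, $\nabla f$ by their components perpendicular to $v-v_*$, estimate these by angular quantities at the cost of an extra factor $|v-v_*|^{-1}$ each, and then invoke Theorem \ref{Kalpha}-(2) ``at $\alpha=3$''. This fails twice. First, Theorem \ref{Kalpha} requires $0<\alpha<d=3$, so $\alpha=3$ is outside its range, and no Hardy--Littlewood--Sobolev or convolution inequality can rescue it, since $|v-v_*|^{-3}$ is not locally integrable in $\R^3$. Second, and more fundamentally, the pointwise bound $|(v-v_*)\times\nabla h(v)|\le |v\times\nabla h(v)|+|v_*|\,|\nabla h(v)|$ throws away exactly the cancellation that makes the original term mild: since $|(v-v_*)\times\nabla h(v)|\le |v-v_*|\,|\nabla h(v)|$, the product $a(v-v_*):\nabla h\otimes\nabla f$ has only a $|v-v_*|^{-1}$ singularity, whereas after your substitution the local singularity becomes genuinely $|v-v_*|^{-3}$. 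The proofs behind this statement (\cite{HE18}, \cite{HJL}) do not argue this way globally: they decompose dyadically in the relative velocity, keep the isotropic gradient form (kernel $|v-v_*|^{-1}$) in the near region $|v-v_*|\lesssim\langle v\rangle$ where $\langle v\rangle\sim\langle v_*\rangle$, and pass to the anisotropic norms $\|(-\Delta_{\S^2})^{1/2}\cdot\|_{L^2_{\omega}}$ only in the far region, where the extra negative powers of $|v-v_*|$ are traded for negative powers of $\langle v\rangle$ and $\langle v_*\rangle$; that trade is precisely the origin of the weight budget $\omega_1+\omega_2=-3$ and of the factor $\|g\|_{L^1_3}+\|g\|_{L^2_3}$. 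Without such a near/far splitting (or the sharper frequency-space machinery of \cite{HE18}), your argument for \eqref{upperan} does not close; the interpolation filling in $a_2+b_2=1$ is the least of the issues.
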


We also summarize several results on the Landau equation, including global well-posedness, propagation of regularity, monotonicity of Fisher information and global dynamics.

\begin{thm}\label{thm:coulombexist}
Let $f_0\in L^1_{2}$ be a nonnegative function with $\cH(f_0)<\infty$. 
\begin{itemize}
  \item [(1)]{\bf Well-posedness.}  Suppose that  $f_0 \in L_{2 m+1}^1
    \cap H_m^\mathsf{r}$   with $\mathsf{r} \in\left[-\frac{1}{2},
      0\right]$ and $m \geq \frac{9-6 \mathsf{r}}{4}$. \\
    Then there exists a global solution $f$ of the equation \eqref{1}
    satisfying that $f \in C\left([0, \infty) ; H_m^\mathsf{r}\right)
    \cap$ $L_{l o c}^2([0, \infty) ;
    H_{m-\frac{3}{2}}^{\mathsf{r}+1})$.  Furthermore, if $\mathsf{r}>1/2$, then the solution is unique.
  \item [(2)] {\bf Smoothing estimates.}  Let  $f\in C([0,\infty);H^{\mathsf{r}}_m)\cap L^2_{loc}([0,\infty);H^{\mathsf{r}+1}_{m-\f32})$ 
  be a global solution of the equation \eqref{1} with
  $\mathsf{r}\in[-\f12,0]$ and $m\geq \f{9-6\mathsf{r}}{4}$,
  satisfying $\inf_{t\in[0,\infty)}\|f(t)\|_{L^1}>0$ and
  $\sup_{t\in[0,\infty)}\{\|f(t)\|_{L^1_2} +\cH(t)\}<\infty$. Then 
  for any $\nn\geq\mathsf{r}$, it holds that
  \ben\label{SMesti}\|f(t)\|_{H^\nn_{m-\f{3}{2}\nn+\f{3\mathsf{r}}{2}}}\ls t^{-\f{\nn}{2}+\f{\mathsf{r}}{2}}\mathbf{1}_{t\leq1}+C(t)\mathbf{1}_{t>1},\quad \forall t\in(0,\infty),\een
  where \( C(t) \) is a locally uniformly bounded function depending
  only on $\|f_0\|_{H^\mathrm{r}_m}$, $\mathsf{r}$, $m$, $\nn$, $\inf_{t\in[0,\infty)}\|f(t)\|_{L^1}$ and $\sup_{t\in[0,\infty)}\{\|f(t)\|_{L^1_2}+\cH(f(t))\}$.  
  \item [(3)]{\bf  Propagation of regularity near initial time.}  For any $\nn\geq2, \ell\geq3$, if $\|f_0\|_{H^\nn_\ell}<\infty$, then there exists a $T>0$ depending solely on $\nn,\ell$ and $\|f_0\|_{H^\nn_\ell}$ such that the weak solution to \eqref{1} with initial data $f_0$ satisfies
  \ben\label{stpReg}
  \sup_{t\in[0,T]}\|f(t)\|_{H^\nn_\ell}\ls_{\nn,\ell} \|f_0\|_{H^\nn_\ell}.
  \een
  \item [(4)] {\bf Fisher information.} Let $f$ be a classical solution to the spatially homogeneous Landau equation \eqref{1} with Coulomb potential and with initial data $f_0$, then the Fisher information given
    by $$
    \cI(f)\; =\;\int \f{|\na f|^2}{f} \, \dD v,
    $$
    is a  monotone decreasing  function in time.
  \item [(5)] {\bf Propagation of $L^1$ moment and longtime behavior.}
    Consider any global $H$ - or weak solution $f$ to the spatially homogeneous Landau equation \eqref{1} with Coulomb potential and with initial data $f_0$ under the Assumption \ref{TAES} - \eqref{hyp:0}. Assume that $f_0 \in L_{\ell}^1\left(\mathbb{R}^3\right)$ with $\ell>\frac{19}{2}$. Then, for any positive $\beta<\frac{2 \ell^2-25 \ell+57}{9(\ell-2)}$, there exists $C_\beta>0$ depending only on $\beta$ and $f_0$, such that
\ben
  \forall t \geq 0, \quad \|f-\mu\|_{L^1}^2 \;\leq\; \frac{C_\beta}{(1+t)^{\beta}}.
\een
Moreover, it holds that
\ben
  \|f(t)\|_{L^1_\ell}  \;\ls\;   \|f_0\|_{L^1_\ell} \;+\;  \ell^{2(\ell-6)+3}\; t\;.
\een
\end{itemize}
\end{thm}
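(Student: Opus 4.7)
The plan is to establish Theorem \ref{thm:coulombexist} as a compilation of several deep recent results on the homogeneous Landau equation with Coulomb potential, supplemented by classical parabolic arguments. The logical backbone is part (4), the monotonicity of Fisher information along classical solutions; this is the landmark result of Guillen--Silvestre (cited as \cite{GS,GGL}), and I would simply quote it. Once (4) is in hand, the bound $\cI(f(t)) \le \cI(f_0)$ combined with Sobolev embedding and the propagation of moments in (5) furnishes uniform-in-time $L^2_m$ bounds of $f$, which are the crucial inputs to feed the coercivity of Theorem \ref{coe}.

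For part (1), I would run an energy estimate for \eqref{1} in the weighted Sobolev norm $H_m^{\mathsf{r}}$. The coercivity of Theorem \ref{coe}, applicable thanks to $\sup_t(\|f\|_{L^1_2}+\cH(f))<\infty$, produces an anisotropic dissipation controlling $\|f\|_{H^1_{m-3/2}}$, while the upper bounds of Proposition \ref{UpperboundofQ} absorb the nonlinear error into this dissipation at the cost of lower-order weighted norms. The constraint $m\ge (9-6\mathsf{r})/4$ is precisely what aligns the weight indices between \eqref{upperan} and \eqref{uppersob} when $\mathsf{r}\in[-\tfrac12,0]$. Existence follows by a standard Galerkin/regularization approximation and passage to the limit; for $\mathsf{r}>1/2$, a stability estimate on the difference of two solutions in $H_m^{\mathsf{r}}$ closes uniqueness via Proposition \ref{UpperboundofQ}(i).

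For part (2), the approach is to commute $\partial^\alpha$ through the equation and iterate the argument of part (1). At each differentiation, Proposition \ref{UpperboundofQ} generates commutator terms that lose $3/2$ in the weight index (reflecting the fact that the coercivity lives in the anisotropic space with exponent $-3/2$), which matches the stated loss $m - \tfrac{3}{2}\nn + \tfrac{3}{2}\mathsf{r}$. A Duhamel-type weighting by $t^{(\nn-\mathsf{r})/2}$ absorbs the singularity at $t=0$, and the long-time piece follows from the uniform bounds already established. Part (3) is then essentially routine: on a short interval, a fixed-point contraction in $L^\infty_T H^{\nn}_\ell$ using the self-map estimate $\|\cQ(f,f)\|_{H^{\nn-1}_\ell}\lesssim \|f\|_{H^{\nn}_\ell}^2$ (again from Proposition \ref{UpperboundofQ}) closes the argument.

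Part (5) splits into two substeps. Moment propagation comes from testing \eqref{1} with $\langle v\rangle^\ell$: a Povzner-type gain/loss cancellation in the Landau operator, tracking the $\ell$-dependence of the constants carefully, gives the at-most-linear-in-$t$ growth with prefactor $\ell^{2(\ell-6)+3}$. For the convergence in $L^1$, I would follow the Desvillettes--Jiang--Villani strategy: use the entropy dissipation to obtain algebraic decay of the relative entropy $\cH(f\,|\,\mu)$, interpolated against the growing $L^1_\ell$ moment to get the explicit exponent $\beta < (2\ell^2-25\ell+57)/(9(\ell-2))$, then conclude via Csisz\'ar--Kullback--Pinsker. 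The main obstacle is not any single step but the orchestration: each part has been proved with substantial individual effort in \cite{HJL,GS,GGL,CDH,DHJ,Vi98,DV1,DV2}, and the careful work is verifying that the assumptions on $f_0$ threaded through parts (1)--(5) are mutually compatible and that the weight budgets close consistently.
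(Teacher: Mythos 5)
Your proposal matches the paper's treatment: the paper gives no internal proof of this theorem, simply attributing parts (1)--(3) to \cite[Corollary 1.1, Theorem 1.2, Proposition 3.1]{HJL}, part (4) to \cite{GS}, and part (5) to \cite[Theorem 2 and Lemma 8]{CDH}, exactly as you indicate. Your additional proof sketches are reasonable outlines of how those cited works argue, but they are not part of the paper's argument and would each require substantial further work to stand on their own.
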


Results $(1)$, $(2)$ and $(3)$ are respectively established in
\cite[Corollary 1.1]{HJL}, \cite[Theorem 1.2]{HJL} and
\cite[Proposition 3.1]{HJL} while  $(4)$
 is proved in \cite{GS} and  $(5)$ is demonstrated in \cite[Theorem 2 and Lemma 8]{CDH}.

 \section{Technical lemmas}
 \label{sec:appendix2}
 \setcounter{equation}{0}
\renewcommand\theequation{B\arabic{equation}}
\setcounter{figure}{0}
\setcounter{table}{0}

 We begin with a profile to characterize the weighted Sobolev spaces proved in \cite[Theorem 5.1]{HE18}.

\begin{lem}\label{thm:dyadic} 
  Let $m$, $l \in \mathbb{R}$. Then for $f \in H_l^m$, we have
\ben
  \sum_{k=-1}^{\infty} 2^{2 k l}\left\|\cP_k f\right\|_{H^m}^2 \;\sim_{m,l}\;\|f\|_{H_l^m}^2,
\een
where $\cP_j$ is a localized operator defined as
\ben
\cP_{-1} f(x)\;:=\;\psi(x) f(x), \quad \cP_j f(x)\;=\;\varphi(2^{-j}
x) f(x),\quad j \geq 0,
\een
with $\psi\in C_0^{\infty}\left(\cB(0,4/3)\right)$, $\varphi\in C_0^{\infty}(\cB(0,8/3)\backslash \cB(0,3/4))$ satisfying 
\beq
0 \;\leq\; \psi,\; \varphi \;\leq\; 1, \quad \psi(\xi)+\sum_{j \geq 0} \varphi(2^{-j} \xi)=1, \quad \xi \in \mathbb{R}^3.
\eeq
\end{lem}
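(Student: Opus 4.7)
The plan is to reduce the equivalence to two separate facts: a local norm equivalence $\|\cP_k f\|_{H^m_l} \sim 2^{kl} \|\cP_k f\|_{H^m}$ for each $k$, and a global assembly $\|f\|_{H^m_l}^2 \sim \sum_k \|\cP_k f\|_{H^m_l}^2$ obtained from the finite-overlap property of the partition $\{\psi,\varphi(2^{-j}\cdot)\}_{j\ge 0}$. For the first fact, I would exploit the spatial localization: on $\mathrm{supp}\,\cP_k f$ with $k \ge 0$ one has $3\cdot 2^{k-2} \le |x| \le 2^{k+3}/3$, hence $\br{x} \sim 2^k$, while on $\mathrm{supp}\,\cP_{-1}f$ one has $\br{x} \sim 1$. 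Picking a slightly fatter smooth cutoff $\tilde\varphi_k$ equal to $1$ on $\mathrm{supp}\,\cP_k f$, one writes $\br{x}^l \cP_k f = 2^{kl}\, g_k(x)\, \cP_k f$, where $g_k(x) := 2^{-kl}\br{x}^l \tilde\varphi_k(x)$ is smooth with derivatives uniformly bounded after the rescaling $x \mapsto 2^k x$. Substituting into the Fourier characterization $\|f\|_{H^m_l} = \|\br{D}^m(\br{x}^l f)\|_{L^2}$ and invoking boundedness of multiplication by such symbols on $H^m$ (up to commutators) yields the local equivalence.

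Next I would sum over $k$. For the upper bound $\sum_k \|\cP_k f\|_{H^m_l}^2 \lesssim \|f\|_{H^m_l}^2$, each $\cP_k$ is multiplication by a smooth function whose $\alpha$-th derivative is bounded by $2^{-k|\alpha|}$, which is harmless since $\br{x}^{-|\alpha|} \sim 2^{-k|\alpha|}$ on the support; a Leibniz-type expansion (for integer $m$) or a paraproduct decomposition (for real $m$) combined with the finite overlap of the supports gives the bound. The reverse inequality uses the identity $f = \sum_k \cP_k f$ together with Cauchy--Schwarz and again the fact that each $x$ lies in at most a bounded number of the supports. Combining with the local equivalence yields $\|f\|_{H^m_l}^2 \sim \sum_k \|\cP_k f\|_{H^m_l}^2 \sim \sum_k 2^{2kl}\|\cP_k f\|_{H^m}^2$, as claimed.

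The hardest step is handling non-integer $m$, where $\br{D}^m$ is a nonlocal Fourier multiplier that does not preserve spatial localization. For integer $m$, the argument reduces to a direct Leibniz computation of $\partial^\alpha(\br{x}^l \cP_k f)$ together with the pointwise estimate $|\partial^\beta \br{x}^l| \lesssim \br{x}^{l-|\beta|}$, and one need not worry about nonlocality. For real $m$, I would control the commutators $[\br{D}^m, \tilde\varphi_k]$ and $[\br{D}^m, \br{x}^l]$ via Coifman--Meyer or paradifferential calculus; since the symbols $\tilde\varphi_k$ have uniformly bounded seminorms after rescaling and $\br{x}^l$ is a classical weight of order $l$, these commutators gain one order of regularity (or decay) relative to the principal term and are absorbed after summation in $k$, producing constants that depend only on $m$ and $l$.
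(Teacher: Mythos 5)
The paper does not actually prove this lemma; it is quoted verbatim from \cite[Theorem 5.1]{HE18}, so there is no in-paper argument to compare against. Your outline is the standard proof of this fact and its two-step structure (local equivalence $\|\cP_k f\|_{H^m_l}\sim 2^{kl}\|\cP_k f\|_{H^m}$ via $\br{x}\sim 2^k$ on $\mathrm{supp}\,\varphi(2^{-k}\cdot)$, then almost-orthogonal assembly over $k$) is the right one. The local step is fine: $2^{-kl}\br{x}^l\tilde\varphi_k$ and its reciprocal cutoff $2^{kl}\br{x}^{-l}\tilde\varphi_k$ have derivatives bounded uniformly in $k$, and multiplication by a function with uniformly bounded $C^M$ seminorms ($M$ large depending on $m$) is uniformly bounded on $H^m$ for every real $m$; this handles non-integer $m$ without any delicate commutator analysis.

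The one step you should firm up is the lower bound in the assembly, $\|f\|_{H^m_l}^2\lesssim\sum_k\|\cP_k f\|_{H^m_l}^2$. As written, ``Cauchy--Schwarz and finite overlap of the supports'' does not close the argument for non-integer (or negative) $m$: writing $u=\br{x}^l f=\sum_k v_k$ with $v_k=\br{x}^l\varphi_k f$, the cross terms $\br{\br{D}^m v_j,\br{D}^m v_k}$ do not vanish for $|j-k|\ge 2$ because $\br{D}^m$ destroys the spatial disjointness of the supports — precisely the nonlocality you flag elsewhere but do not address here. Two standard fixes: (i) duality, $\nr{\br{u,w}}=\nr{\sum_k\br{v_k,\tilde\varphi_k w}}\le\bigl(\sum_k\|v_k\|_{H^m}^2\bigr)^{1/2}\bigl(\sum_k\|\tilde\varphi_k w\|_{H^{-m}}^2\bigr)^{1/2}$, where the second factor is controlled by $\|w\|_{H^{-m}}$ using exactly your upper-bound direction applied to the fattened partition at exponent $-m$; or (ii) off-diagonal decay $\|\tilde\varphi_j\br{D}^{2m}\tilde\varphi_k\|_{L^2\to L^2}\lesssim 2^{-M\max(j,k)}$ for $|j-k|\ge 2$ (rapid decay of the kernel of $\br{D}^{2m}$ away from the diagonal), which makes the cross terms summable. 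With either repair the proof is complete; similarly, for the upper bound at general real $m$ it is cleanest to verify the bound for $m=0$ and positive even integers and then interpolate and dualize, rather than invoking paraproducts.
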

As a result, we can prove the following interpolation inequality.
\begin{lem}\label{thm:interpolationSobolev}
Let $m$, $l\in\R$ and $\theta\in (0,1)$, satisfying that  $m=m_1\theta+m_2(1-\theta)$, $l=l_1\theta+l_2(1-\theta)$. Then we have
\ben
\label{eq:interpolationSobolev}
\|f\|_{H^m_l} \;\ls\; \Hab{f}{m_1}{l_1}^{\theta}\;\Hab{f}{m_2}{l_2}^{1-\theta}.
\een
\end{lem}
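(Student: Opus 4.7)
The plan is to reduce the weighted interpolation to an unweighted interpolation on each dyadic annulus, using the physical-space dyadic characterization provided by Lemma \ref{thm:dyadic}, and then recombine via Hölder's inequality on the summation in $k$.

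First I would invoke Lemma \ref{thm:dyadic} to rewrite all three weighted norms as weighted $\ell^2$ sums of unweighted Sobolev norms of the dyadic pieces $\cP_k f$:
\begin{equation*}
\|f\|_{H_l^m}^2 \,\sim\, \sum_{k\ge -1} 2^{2kl}\,\|\cP_k f\|_{H^m}^2, \qquad \|f\|_{H_{l_i}^{m_i}}^2 \,\sim\, \sum_{k\ge -1} 2^{2kl_i}\,\|\cP_k f\|_{H^{m_i}}^2 \quad (i=1,2).
\end{equation*}
Next, for each fixed $k$ I would apply the classical (unweighted) Sobolev interpolation inequality
\begin{equation*}
\|\cP_k f\|_{H^m} \,\lesssim\, \|\cP_k f\|_{H^{m_1}}^{\theta}\,\|\cP_k f\|_{H^{m_2}}^{1-\theta},
\end{equation*}
which is immediate from the Fourier-side definition $\|g\|_{H^s}=\|\br{\xi}^s\hat g\|_{L^2}$ and Hölder's inequality (since $\br{\xi}^m=\br{\xi}^{m_1\theta}\br{\xi}^{m_2(1-\theta)}$). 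Combining with the weight splitting $2^{2kl}=2^{2kl_1\theta}\cdot 2^{2kl_2(1-\theta)}$ yields
\begin{equation*}
2^{2kl}\,\|\cP_k f\|_{H^m}^2 \,\lesssim\, \bigl(2^{2kl_1}\,\|\cP_k f\|_{H^{m_1}}^2\bigr)^{\theta}\, \bigl(2^{2kl_2}\,\|\cP_k f\|_{H^{m_2}}^2\bigr)^{1-\theta}.
\end{equation*}

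Finally, I would sum in $k$ and apply the discrete Hölder inequality with conjugate exponents $1/\theta$ and $1/(1-\theta)$ to obtain
\begin{equation*}
\sum_{k\ge -1} 2^{2kl}\|\cP_k f\|_{H^m}^2 \,\lesssim\, \Bigl(\sum_{k\ge -1} 2^{2kl_1}\|\cP_k f\|_{H^{m_1}}^2\Bigr)^{\!\theta} \Bigl(\sum_{k\ge -1} 2^{2kl_2}\|\cP_k f\|_{H^{m_2}}^2\Bigr)^{\!1-\theta},
\end{equation*}
and then apply Lemma \ref{thm:dyadic} in reverse on both sides to replace each sum by the corresponding weighted Sobolev norm squared. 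Taking square roots yields \eqref{eq:interpolationSobolev}.

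The only mildly delicate point is the unweighted interpolation step for $\cP_k f$: one must know that the standard Sobolev interpolation holds for real (possibly negative) exponents $m_1,m_2$ with $m=\theta m_1+(1-\theta)m_2$, which is a routine consequence of the Fourier characterization of $H^s$ via the multiplier $\br{D}^s$. Everything else is bookkeeping with the dyadic weights, and the whole argument is essentially a weighted $\ell^2$ version of the classical three-lines lemma.
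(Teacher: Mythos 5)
Your proposal is correct and follows essentially the same route as the paper: dyadic decomposition via Lemma \ref{thm:dyadic}, unweighted Sobolev interpolation on each block $\cP_k f$, splitting of the weight $2^{2kl}=2^{2kl_1\theta}2^{2kl_2(1-\theta)}$, and discrete H\"older in $k$. You merely make explicit the H\"older step that the paper leaves implicit in its final inequality.
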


\begin{proof}
  Thanks to  Lemma \ref{thm:dyadic}, since $m=m_1\theta+m_2(1-\theta)$
  and $l=l_1\theta+l_2(1-\theta)$,  we have
\begin{eqnarray*}
\|f\|_{H^m_l}^2&\ls & \ds\sum_{k=-1}^{\infty} 2^{2 k l}\left\|\cP_k
                            f\right\|_{H^m}^2\\
  &\ls & \ds\sum_{k=-1}^{\infty} 2^{2 k l}\left\|\cP_k f\right\|_{H^{m_1}}^{2\theta}\left\|\cP_k f\right\|_{H^{m_2}}^{2(1-\theta)}
  \\
               &=&\ds\sum_{k=-1}^{\infty} \left( 2^{2 k l_1}\left\|\cP_k
                   f\right\|_{H^{m_1}}^{2} \right)^{\theta}\left( 2^{2
                   k l_2}\left\|\cP_k f\right\|_{H^{m_2}}^{2}
                   \right)^{1-\theta}
  \,\ls\;  \ds\Hab{f}{m_1}{l_1}^{2\theta}\Hab{f}{m_2}{l_2}^{2(1-\theta)},
\end{eqnarray*}
which completes the proof.
\end{proof}

The following  inequalities will be frequently employed in our
analysis of weighted Sobolev spaces and the anisotropic structure of
the collision operator
%
%On the one hand, we have \cite[Lemma 9.5]{HJL}
%\begin{lem}
 % \label{thm:Hml} 
%Let $m$, $l$, $k\in\R$ and $\beta$ be a  multi-index.  The following
%inequality holds for any $f\in H^{m+|\beta|}_{l+k}(\R^3)$,
%$$
%\|\pa^\beta f\, \wei^k\|_{H^m_l}\;\ls\; \|f\|_{H^{m+|\beta|}_{l+k}}.
%$$
%\end{lem}
%On the other hand,  we recall
\cite[ Lemma 5.7]{HE18}.
 \begin{lem}
   \label{thm:fs2}
   Suppose $f \in H_s^s\left(\mathbb{R}^3\right)$ with $s \geq 0$. Then 
%$$
 % \left\|\left(-\Delta_{\mathbb{S}^2}\right)^{s / 2} f\right\|_{L^2}
 % \lesssim\|f\|_{H_s^s}.
 % $$
 % Hence, we also have
  \ben
  \label{eq:h1s}
\|f\|_{H^1_{-3/2}}\ls \|f\|_{\H1A}\ls \|f\|_{H^1_{-1/2}},
\een
where the space $\H1A$ is defined in \eqref{eq:defh1s}
\end{lem}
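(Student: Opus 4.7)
The plan is to treat the two inequalities separately. The lower bound $\|f\|_{H^1_{-3/2}} \ls \|f\|_{\H1A}$ is immediate from the definition \eqref{eq:defh1s}: indeed $\|f\|_{H^1_{-3/2}}^2$ is one of the two nonnegative summands making up $\|f\|_{\H1A}^2$, so the real content lies in the upper bound.

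For the upper bound $\|f\|_{\H1A} \ls \|f\|_{H^1_{-1/2}}$, I again split $\|f\|_{\H1A}^2$ into its two components. The inequality $\|f\|_{H^1_{-3/2}} \le \|f\|_{H^1_{-1/2}}$ is trivial since $\wei^{-3/2} \le \wei^{-1/2}$, so the entire problem reduces to proving
$$
\|(-\Delta_{\S^2})^{1/2} f\|_{L^2_{-3/2}} \,\ls\, \|\na f\|_{L^2_{-1/2}}.
$$

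The key step is to exploit the rotational structure of $-\Delta_{\S^2}$. Introducing the angular vector fields $L_{ij} := v_i\pa_j - v_j\pa_i$ for $1 \le i < j \le 3$, which are anti-self-adjoint on $L^2(\R^3)$, one has $-\Delta_{\S^2} = \sum_{i<j} L_{ij}^{\,*} L_{ij}$. Moreover, each $L_{ij}$ is a tangential derivative along spheres $\{|v|=r\}$ and therefore commutes with any radial function, in particular with the weight $\wei^{-3/2}$. The spectral theorem for the positive self-adjoint operator $-\Delta_{\S^2}$ then yields, for sufficiently smooth $g$,
$$
\|(-\Delta_{\S^2})^{1/2} g\|_{L^2(\R^3)}^2 \,=\, \langle -\Delta_{\S^2}\, g,\, g\rangle \,=\, \sum_{1\le i<j\le 3}\|L_{ij}\,g\|_{L^2(\R^3)}^2,
$$
and applying this identity to $g = \wei^{-3/2} f$ together with the commutation $L_{ij}(\wei^{-3/2} f) = \wei^{-3/2} L_{ij} f$ gives
$$
\|(-\Delta_{\S^2})^{1/2} f\|_{L^2_{-3/2}}^2 \,=\, \sum_{1\le i<j\le 3}\|L_{ij}\, f\|_{L^2_{-3/2}}^2.
$$

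To conclude, the pointwise bound $|L_{ij} f| \le |v|\,|\na f|$ combined with $|v|^2\wei^{-3} \le \wei^{-1}$ gives
$$
\sum_{i<j}\|L_{ij} f\|_{L^2_{-3/2}}^2 \,\ls\, \|\na f\|_{L^2_{-1/2}}^2 \,\le\, \|f\|_{H^1_{-1/2}}^2.
$$
The main technical point to be settled is the justification of the spectral identity; it follows from the spectral theorem and can be made rigorous by expanding in spherical harmonics on each sphere $\{|v|=r\}$, with the decay/regularity hypothesis $f\in H^s_s$ (and a standard density argument if needed) ensuring that the formal manipulations are valid.
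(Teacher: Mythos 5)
Your proposal is correct. Note that the paper itself gives no proof of this lemma: it simply cites \cite[Lemma 5.7]{HE18}, so there is nothing internal to compare against, and your self-contained argument is welcome. The two halves are handled exactly as one would expect: the left inequality is definitional, and the right one reduces to $\|(-\Delta_{\S^2})^{1/2}f\|_{L^2_{-3/2}}\ls\|\na f\|_{L^2_{-1/2}}$. Your treatment of this is sound: the vector fields $L_{ij}=v_i\pa_j-v_j\pa_i$ are anti-self-adjoint on $L^2(\R^3)$ and annihilate radial functions, so they commute with multiplication by $\wei^{-3/2}$; the quadratic-form identity $\|(-\Delta_{\S^2})^{1/2}g\|_{L^2}^2=\sum_{i<j}\|L_{ij}g\|_{L^2}^2$ together with this commutation (which passes to $(-\Delta_{\S^2})^{1/2}$ by the spherical-harmonics expansion, since radial multiplication preserves each angular mode) gives $\|(-\Delta_{\S^2})^{1/2}f\|_{L^2_{-3/2}}^2=\sum_{i<j}\|L_{ij}f\|_{L^2_{-3/2}}^2$, and the pointwise bound $|L_{ij}f|\le |v|\,|\na f|$ with $|v|^2\wei^{-3}\le\wei^{-1}$ closes the estimate. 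Two cosmetic remarks: the paper's displayed definition $-\Delta_{\S^2}=\sum_{i<j}(v_i\pa_j-v_j\pa_i)^2$ carries a sign slip (the operator so written is negative semidefinite), and your convention $-\Delta_{\S^2}=\sum_{i<j}L_{ij}^{*}L_{ij}$ is the intended, positive one; also, the comparison $\|f\|_{H^1_{-3/2}}\le\|f\|_{H^1_{-1/2}}$ is pointwise-trivial only for the derivative-based norm, but the paper's Lemma \ref{WN2lHNl} makes the two definitions equivalent, so this is harmless.
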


Next we prove 
\begin{lem}
  \label{WN2lHNl}
  Let $n\in \N$ and $l\in\R$. Then for smooth
  function $f$, it holds that
  $$
  \|f\|_{H^n_l}\;\sim\;  \|f\|_{W^{n,2}_l}.
  $$
\end{lem}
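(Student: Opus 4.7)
The norm $\|f\|_{W^{n,2}_l}^2 = \sum_{|\alpha|\le n}\|\wei^l \pa^\alpha f\|_{L^2}^2$ is the classical weighted Sobolev norm appearing in the definition \eqref{def:Hkl}, whereas $\|f\|_{H^n_l} = \|\br{D}^n(\wei^l f)\|_{L^2}$ is the Fourier--multiplier version introduced later in the paper. Invoking the standard equivalence $\|\br{D}^n g\|_{L^2}^2 \sim \sum_{|\alpha|\le n}\|\pa^\alpha g\|_{L^2}^2$, valid for $n\in\N$, applied to $g = \wei^l f$, the proof reduces to establishing
\begin{equation*}
\sum_{|\alpha|\le n}\|\wei^l \pa^\alpha f\|_{L^2}^2 \,\sim\, \sum_{|\alpha|\le n}\|\pa^\alpha(\wei^l f)\|_{L^2}^2.
\end{equation*}
The whole argument rests on the elementary pointwise bound $|\pa^\beta \wei^l| \ls_{l,\beta} \wei^{l-|\beta|} \le \wei^l$, valid for every $l\in\R$ and every multi-index $\beta$, since $\wei\ge 1$ forces $\wei^{l-|\beta|}\le\wei^l$ as soon as $|\beta|\ge 0$.

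One direction is then immediate: applying the Leibniz rule to $\pa^\alpha(\wei^l f)$ and using the pointwise bound above on each factor $\pa^\beta \wei^l$ shows that each summand is controlled by $\|\wei^l \pa^{\alpha-\beta} f\|_{L^2}$ with $|\alpha-\beta|\le n$, and hence by $\|f\|_{W^{n,2}_l}$. For the reverse direction I would proceed by induction on $n$. The base case $n=0$ is an identity. For the inductive step, solving the Leibniz identity for the top-order term yields
\begin{equation*}
\wei^l \pa^\alpha f \,=\, \pa^\alpha(\wei^l f)\,-\,\sum_{0<\beta\le\alpha}\binom{\alpha}{\beta}(\pa^\beta \wei^l)\,\pa^{\alpha-\beta} f,
\end{equation*}
and since every term of the sum carries strictly fewer derivatives on $f$ while being weighted only by $\wei^l$, the induction hypothesis (applied at level $n-1$) bounds each $\|\wei^l \pa^{\alpha-\beta} f\|_{L^2}$ by $\|f\|_{H^{n-1}_l}\le \|f\|_{H^n_l}$, which closes the induction.

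The argument is essentially bookkeeping of the Leibniz expansion. The only point that deserves a moment of attention is that the pointwise bound $|\pa^\beta \wei^l|\le C_{l,\beta}\,\wei^l$ holds uniformly in the sign of $l$, which is guaranteed by $\wei\ge 1$; this is the main (and only) obstacle to verify. No analytic subtlety arises because the commutator between $\pa^\alpha$ and multiplication by $\wei^l$ is strictly of lower differential order in $f$.
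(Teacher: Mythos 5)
Your proof is correct, but it takes a genuinely different route from the paper's. The paper inducts directly on the Fourier-multiplier norm: it peels off one derivative at a time through the identity $\|\br{D}^n h\|_{L^2}\sim \|\br{D}^{n-1}h\|_{L^2}+\sum_i\|R_i\pa_{v_i}\br{D}^{n-1}h\|_{L^2}$ with $h=\wei^l f$ and the Riesz transforms $R_i$, and then commutes $\pa_{v_i}$ past the weight; the price is the nontrivial estimate $\|\br{D}^{n-1}(\pa_{v_i}\wei^l)f\|_{L^2}\ls\|f\|_{H^{n-1}_l}$, which mixes a fractional-order multiplier with a weight and for which the paper defers to an external commutator lemma of He--Ji--Luo. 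You instead dispose of the multiplier at the outset via the classical unweighted equivalence $\|\br{D}^n g\|_{L^2}^2\sim\sum_{|\alpha|\le n}\|\pa^\alpha g\|_{L^2}^2$ applied to $g=\wei^l f$ (pure Plancherel for integer $n$), so that the only remaining task is to commute genuine derivatives past $\wei^l$, which is exact Leibniz calculus plus the pointwise bound $|\pa^\beta\wei^l|\ls_{l,\beta}\wei^{l-|\beta|}\le\wei^l$. Both arguments are inductions on $n$ for the direction in which the weight sits inside the derivatives, but yours is more self-contained (no Riesz transforms, no cited weighted pseudo-differential commutator estimate), at the cost of being tied to integer $n$ — which is all the lemma claims. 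One presentational nitpick: in the inductive step you bound $\|\wei^l\pa^{\alpha-\beta}f\|_{L^2}$ "by $\|f\|_{H^{n-1}_l}$"; strictly the induction hypothesis first gives $\ls\|f\|_{W^{n-1,2}_l}\ls\|f\|_{H^{n-1}_l}$, and the final comparison $\|f\|_{H^{n-1}_l}\le\|f\|_{H^n_l}$ uses $\br{\xi}^{n-1}\le\br{\xi}^n$; this is harmless but worth stating.
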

\begin{proof}
  We apply an inductive method to prove the desired result. By the definition, the equivalence holds for $n=0$. Let us assume that it holds for $n-1$ with $n\ge1$. To prove the result holds for $n$, we observe that 
  \beno
  \|f\|_{H^N_l}^2\,=\, \|\br{D}^n \br{v}^l f\|_{L^2}^2\,\sim\; \|
  \br{D}^{n-1} \br{v}^l f\|_{L^2}\;+\;\sum_{i=1}^3\| R_i\pa_{v_i}(
  \br{D}^{n-1} \br{v}^l f)\|_{L^2},
  \eeno 
where $R_i$ denotes the Riesz transform associated with the multiplier
$\xi/|\xi|$.
Using the fact that $\|f\|_{H^{n-1}_l}\sim_{N,l} \|f\|_{W^{n-1,2}_l}$, we have 
\begin{eqnarray*}
  \|f\|_{H^n_l}^2 &=& \|\br{D}^n \br{v}^l f\|_{L^2}^2
  \\
  &\ls& \|  \br{D}^{n-1} \br{v}^l
  f\|_{L^2}+\sum_{i=1}^3(\|\br{D}^{n-1} \br{v}^l
  \pa_{v_i}f)\|_{L^2}+\|\br{D}^{n-1} (\pa_{v_i}\br{v}^l) f)\|_{L^2})
  \\
  &\ls& \|f\|_{H^{n-1}_l}+\|\na f\|_{H^{n-1}_l} \;\ls\;
  \|f\|_{W^{n,2}_l}.
  \end{eqnarray*} 
Here we use the fact that $\|\br{D}^{n-1} (\pa_{v_i}\br{v}^l)
f)\|_{L^2}\ls \|f\|_{H^{n-1}_l}$.
We refer to \cite[ Lemma 9.1]{HJL} for detailed proof.
\\
To prove the inverse inequality, we notice that 
\begin{eqnarray*}
  \|f\|_{W^{n,2}_l} &\ls& \|f\|_{W^{n-1,2}_l}\;+\;\|\na
                          f\|_{W^{n-1,2}_l}
  \\
                    &\ls& \|f\|_{H^{n-1}_l}\;+\;\|\br{D}^{m-1} \br{v}^l (\na f)\|_{L^2}
  \\
                    &\ls& \|f\|_{H^n_l}\;+\;\|\br{D}^{m-1}\na \br{v}^l
                          f\|_{L^2}\;+\;\|\br{D}^{m-1}(\na \br{v}^l)
                          \; f\|_{L^2}
  \\
                    &\ls& |f\|_{H^n_l}^2.
                          \end{eqnarray*}
In the last inequality,  we again used the fact that $\|\br{D}^{n-1} (\pa_{v_i}\br{v}^l) f)\|_{L^2}\ls \|f\|_{H^{n-1}_l}$.
 \end{proof}

Finally we state a theorem on the convolution inequality proved in \cite[Lemma 3.3]{C15}.

\begin{thm}
  \label{Kalpha}
  Suppose that $0<\alpha<d$ and $\ell_0$, $\ell_{1}$, $\ell_{2}$,
  $n_0$, $n_1$, $n_2\in\R$ satisfying that
  $$
  |\ell_{0}|\leq\alpha, \ell_{1}+\ell_{2}\;=\;-\ell_{0},
  n_{1}+n_{2}\;=\;-n_{0}.
  $$
  Let us define
\ben
K_\alpha(f,g,h)\;:=\;\iint_{\mathbb{R}^d\times\mathbb{R}^d}|v-v_*|^{-\alpha}f_*\,g\;h\,\dD v_*\,\dD v.
\een
Then, the following estimates hold:
  \begin{itemize}
    \item[(1)] For any $\sigma\geq 0$ with $2\sigma<d$ and $2(\alpha-\sigma)<d$, we have
$$%\ben\label{eq:Ka1}
K_\alpha(f,g,h) \;\lesssim\;\|f\|_{L^1_{\ell_0}}\|g\|_{L^2_{\ell_1}}\|h\|_{L^2_{\ell_2}}\;+\;\|f\|_{L^1_{n_0}}\|\langle v\rangle^{n_1}g\|_{{\dot{H}}^{\alpha-\sigma}}\|\langle v\rangle^{n_2}h\|_{{\dot{H}}^{\sigma}}.
$$%\een
\item[(2)]  For any $0\le \sigma_1<d/2$, $0\le \sigma_2<d/2 $ with $0<\sigma_1+\sigma_2< \alpha$, we have
$$%\ben\label{eq:Ka2}
K_\alpha(f,g,h)\;\lesssim\;\|f\|_{L_{\ell_0}^1}\|g\|_{L_{\ell_1}^2}\|h\|_{L_{\ell_2}^2}\;+\;\|f\|_{L^{\frac{d}{d-\alpha+\sigma_1+\sigma_2}}_{n_0}}\|g\|_{H^{\sigma_1}_{n_1}}\|h\|_{H^{\sigma_2}_{n_2}}.
$$%\een
\item[(3)] For any $p>\f{d}{d-\alpha}$, we have
$$%\ben\label{eq:Ka3}
K_\alpha(f,g,h)\;\lesssim\,\|f\|_{L_{\ell_0}^1}\;\|g\|_{L_{\ell_1}^2}\;\|h\|_{L_{\ell_2}^2}\;+\;\|f\|_{L^{p}_{n_0}}\;\|g\|_{L^2_{n_1}}\;\|h\|_{L^2_{n_2}}.
$$%\een
  \end{itemize} 
where the constants depend only on $d$, $\alpha$, $n_0$, $\sigma$,
$\sigma_1$, $\sigma_2$ and $p$, don't depend on the choice of
$\ell_0$, $\ell_1$, $\ell_2$, $n_1$ and $n_2$.
\end{thm}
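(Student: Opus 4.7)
The plan is to prove all three inequalities via a common architecture: rewrite the integrand using the weight balance $\ell_1+\ell_2=-\ell_0$ (respectively $n_1+n_2=-n_0$), then split the domain into a far region $\{|v-v_*|>1\}$ and a near region $\{|v-v_*|\le 1\}$. The first term in every right-hand side will come from the far region and be identical in all three parts; the second term will come from the near region, and the three parts will differ only in the functional inequality used to handle the singular kernel there.

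Set $F:=f\langle\cdot\rangle^{\ell_0}$, $G:=g\langle\cdot\rangle^{\ell_1}$, $H:=h\langle\cdot\rangle^{\ell_2}$, and analogously $\tilde F,\tilde G,\tilde H$ with the $n_i$'s. Because $\ell_0+\ell_1+\ell_2=0$, the leftover weight after extracting $F_*\,G\,H$ from $f_*gh$ collapses to the single quotient $(\langle v\rangle/\langle v_*\rangle)^{\ell_0}$. On the far region, the elementary bound $\langle v\rangle\le \langle v_*\rangle+|v-v_*|$ together with $|v-v_*|>1$ yields $(\langle v\rangle/\langle v_*\rangle)^{\ell_0}\ls |v-v_*|^{|\ell_0|}$, hence, by the hypothesis $|\ell_0|\le\alpha$,
\beq
|v-v_*|^{-\alpha}\left(\frac{\langle v\rangle}{\langle v_*\rangle}\right)^{\ell_0}\ls |v-v_*|^{|\ell_0|-\alpha}\le 1, \qquad |v-v_*|>1.
\eeq
Fubini followed by Cauchy--Schwarz in $v$ then yields the common contribution $\|f\|_{L^1_{\ell_0}}\|g\|_{L^2_{\ell_1}}\|h\|_{L^2_{\ell_2}}$ to all three parts. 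On the near region $\langle v\rangle\sim\langle v_*\rangle$, so the weight quotient associated to the $n_i$'s collapses to a uniform constant, and the problem reduces to bounding the unweighted trilinear form
\beq
J(\tilde F,\tilde G,\tilde H):=\iint_{|v-v_*|\le 1}|v-v_*|^{-\alpha}\tilde F(v_*)\tilde G(v)\tilde H(v)\,\dD v_*\,\dD v.
\eeq

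The three parts are then distinguished purely by which functional inequality is used on $J$. For (3), the kernel $|w|^{-\alpha}\mathbf{1}_{|w|\le 1}$ belongs to $L^{p'}(\R^d)$ precisely when $p'<d/\alpha$, equivalently $p>d/(d-\alpha)$, and Young's convolution inequality combined with Cauchy--Schwarz gives $J\ls\|\tilde F\|_{L^p}\|\tilde G\|_{L^2}\|\tilde H\|_{L^2}$; note the cutoff $\mathbf{1}_{|w|\le 1}$ is essential here because $|w|^{-\alpha}$ is not $p'$-integrable globally for any finite $p'$. For (2), dropping the cutoff at no cost, the Sobolev embeddings $H^{\sigma_i}(\R^d)\hookrightarrow L^{2d/(d-2\sigma_i)}$ (valid as $\sigma_i<d/2$) place $\tilde G$ and $\tilde H$ in Lebesgue spaces, and Hardy--Littlewood--Sobolev applied against $\tilde F\in L^p$ with $\tilde G\tilde H\in L^r$, $1/r=1-(\sigma_1+\sigma_2)/d$, produces the dual exponent $p=d/(d-\alpha+\sigma_1+\sigma_2)$ exactly as claimed. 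For (1), again dropping the cutoff, the cleanest route is a fractional Hardy argument: by Fubini and translation invariance of homogeneous Sobolev norms,
\beq
J\le\|\tilde F\|_{L^1}\sup_{v}\int|v-u|^{-\alpha}|\tilde G(u)||\tilde H(u)|\,\dD u,
\eeq
and the inner supremum is controlled by Cauchy--Schwarz together with the weighted Hardy inequalities $\||w|^{-(\alpha-\sigma)}u\|_{L^2}\ls\|u\|_{\dot H^{\alpha-\sigma}}$ and $\||w|^{-\sigma}u\|_{L^2}\ls\|u\|_{\dot H^\sigma}$, both valid precisely under the stated thresholds $2\sigma<d$, $2(\alpha-\sigma)<d$.

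The main obstacle is the bookkeeping of weight exponents: one must keep the $\ell$-weights and the $n$-weights cleanly separated because they transfer differently between the two regions, and one must verify that the thresholds $|\ell_0|\le\alpha$ and $\sigma_i<d/2$ are exactly what the reduction demands, so that no logarithmic loss appears at the endpoint. Once this bookkeeping is set up, the analytic core of each estimate reduces to a single classical functional inequality: Young for (3), Hardy--Littlewood--Sobolev for (2), and fractional Hardy for (1).
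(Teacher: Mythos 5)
Your proof is correct. Note that the paper itself does not prove this theorem at all: it is imported verbatim from the literature (cited as \cite[Lemma 3.3]{C15}), so there is no internal proof to compare against; judged on its own, your argument is sound and in fact follows the standard route for this kind of trilinear convolution estimate. The near/far splitting at $|v-v_*|=1$, the transfer of the weight quotient $(\langle v\rangle/\langle v_*\rangle)^{\ell_0}\lesssim |v-v_*|^{|\ell_0|}\le |v-v_*|^{\alpha}$ on the far region (which also explains why the constant depends on $\alpha$ but not on the individual $\ell_i$), and the reduction of the near region to the unweighted form with constant $2^{|n_0|}$ are exactly the right bookkeeping. The three functional inequalities you invoke — Hölder/Young with the truncated kernel in $L^{p'}$ for (3), Hardy--Littlewood--Sobolev plus the embeddings $H^{\sigma_i}\hookrightarrow L^{2d/(d-2\sigma_i)}$ for (2) (the exponent arithmetic giving $p=d/(d-\alpha+\sigma_1+\sigma_2)$ checks out, and the strict inequalities $0<\sigma_1+\sigma_2<\alpha$ are precisely what keeps HLS away from its endpoints), and Cauchy--Schwarz with the translation-invariant fractional Hardy inequalities for (1) — are all correctly matched to the stated thresholds. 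The only caveat worth recording is in part (1): your kernel splitting $|v-u|^{-\alpha}=|v-u|^{-(\alpha-\sigma)}|v-u|^{-\sigma}$ tacitly assumes $0\le\sigma\le\alpha$, since the fractional Hardy inequality is used with the nonnegative order $\alpha-\sigma$; this is clearly the intended range of the statement (otherwise the condition $2(\alpha-\sigma)<d$ is vacuous and the norm $\dot H^{\alpha-\sigma}$ would have negative order), but you should state the restriction explicitly.
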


 \section{Proof of Proposition \ref{prop:betalm} }
 \label{sec:appendix3}
 \setcounter{equation}{0}
\renewcommand\theequation{C\arabic{equation}}
\setcounter{figure}{0}
\setcounter{table}{0}
  By  definition of periodic Landau collision operator \eqref{defQp}, one has
\begin{eqnarray*}
 \cQ_\#(e^{i\f{\pi}{L}l\cdot v},e^{i\f{\pi}{L}m\cdot v}) &=&\na\cdot
\int_{\cD_L}
a^L(z)\left(e^{i\f{\pi}{L}l\cdot
(v+z)}\,
\na
e^{i\f{\pi}{L}m\cdot
v}-\na
e^{i\f{\pi}{L}l\cdot
(v+z)}\,
e^{i\f{\pi}{L}m\cdot
v}\right)\,\dD z
\\
&=&\int_{\cD_L}a^L(z)\left(m\otimes m-l\otimes
    l\right)\,(i\f{\pi}{L})^2\, e^{i\f{\pi}{L}(l\cdot(v+z)+m\cdot v)}\,\dD z
  \\
  &=& \f{\pi^2}{L^2}\int_{\cD_L}a^L(z):(l+m)\otimes
      (l-m)\,e^{i\f{\pi}{L}l\cdot z} \;\dD z \ e^{i\f{\pi}{L}(l+m)\cdot v},
\end{eqnarray*}
which implies that
\beno
\beta(l,m)\,=\;\f{\pi^2}{L^2}\int_{\cD_L}a^L(z):(l+m)\otimes (l-m)\,e^{i\f{\pi}{L}l\cdot z}\,dz.\eeno 
From this latter expression  and applying a change of variable, we derive that
\begin{eqnarray*}
%\label{eq:betalmint}
\beta(l,m)&=&\f{\pi^2}{L^2}\int_{\cB(0,\pi)}\left|\f{L}{\pi}z\right|^{-1}\Pi(z):(l+m)\otimes
              (l-m)\,e^{il\cdot z}\left( \f{L}{\pi} \right)^3 \dD z
\\
\;&=& \int_{\cB(0,\pi)}|z|^{-1}\, \Pi(z):(l+m)\otimes (l-m)\,e^{il\cdot
  z}\,\dD z.
\end{eqnarray*}

We first consider the case where  $|l|>0$ and  introduce
\ben
e_1 \;:=\;\f{l}{|l|},\quad e_3\;:=\;\f{l\times m}{|l\times m|},\quad e_2\;:=\;e_3\times e_1,
\een
and when $|m|=0$ or $|l\times m|=0$, we pick $e_3$ to be a unit vector perpendicular to $e_1$. Thus, the vectors $e_1$, $e_2$  and $e_3$ form an orthogonal coordinate system in $\mathbb{R}^3$. By definition we get that $l=|l|e_1\in span\{e_1\},m\in span\{e_1,e_2\}$, thus $l+m,l-m\in span\{e_1,e_2\}$. Setting
\ben
\label{l+ml-m}
l+m\;=\; a_1\; e_1\;+\; a_2\;e_2,\quad \ l-m\;=\; b_1\; e_1\;+\;b_2\;e_2,
\een
it yields that
\ben\label{eq:a12}
a_1\;=\;\f{l\cdot (l+m)}{|l|},\quad a_2\;=\;\f{|l\times (l+m)|}{|l|}\;=\;\f{|l\times m|}{|l|},
\een
and
\ben\label{eq:b12}
b_1\;=\;\f{l\cdot (l-m)}{|l|}, \quad b_2\;=\;-\f{|l\times (l-m)|}{|l|} \;=\;-\f{|l\times m|}{|l|}\;=\;-a_2.
\een
Now we introduce the  polar coordinate: for $r\ge 0,\theta\in[0,\pi),\phi\in[0,2\pi)$, suppose that
\beq
z\;=\; r\cos\theta\, e_1\;+\;r\sin\theta \cos\phi\, e_2\;+\; r\sin\theta \sin\phi\, e_3.
\eeq
Then $\dD z=r^2\,\sin\theta\, \dD r\, \dD \theta\, \dD \phi$. From this together with \eqref{Pizpq} and \eqref{l+ml-m}, we have 
\begin{eqnarray*}
|z|^2\,\Pi(z):(l+m)\otimes (l-m) &=& (z\times (l
                                 +m))\cdot (z\times (l-m))
  \\
&=& r^2\left(\sin^2\theta\sin^2\phi\, a_2\,b_2+\sin^2\theta
    \,a_1\,b_1+\cos^2\theta\, a_2\,b_2\right)
  \\
  &-& r^2 \left(\sin\theta\cos\theta\cos\phi
    \,(a_1\,b_2+a_2\,b_1)\right)
  \\
  &:=& r^2\; I(\theta,\phi).
\end{eqnarray*}
Since $e^{il\cdot z}=e^{i|l|r\cos\theta}$, we  derive that
\beno
\beta(l,m)&=&  \int_0^\pi \dD r\int_0^\pi \dD \theta\int_0^{2\pi} \dD \phi \ r^{-1}I(\theta,\phi)e^{i|l|r\cos\theta}\,r^2\sin\theta\\
&=& \int_0^\pi \dD r\int_0^\pi \dD \theta \left(\pi\sin^2\theta\, a_2\,b_2+2\pi\sin^2\theta \,a_1\,b_1+2\pi\cos^2\theta \,a_2\,b_2\right).
\eeno

Let $x=|l|\,r>0$,  we may reduce the computation to  the following integrals:
\beq
I_1(x)\;=\;\int_0^\pi \sin^2\theta\, e^{ix\cos\theta}\sin\theta\, \dD\theta,
\quad{\rm and}\quad \ I_2(x)\;=\;\int_0^\pi \cos^2\theta\, e^{ix\cos\theta}\sin\theta \,\dD\theta.
\eeq
 By the change of variable $t:=\cos\theta$, we are led to that
\beq
I_1(x)\;=\;\int_{-1}^{1}(1-t^2)\,e^{ixt}\,\dD t\;=\;\f{4}{x^3}(\sin x-x\cos x),
\eeq
and
\beq
I_2(x)\;=\;\int_{-1}^{1}t^2\,e^{-ixt}\,\dD t\;=\;\f{2}{x^3}((x^2-2)\sin x+2x\cos x).
\eeq
From these, we further derive that 
\begin{eqnarray*}
%\label{eq:betaff}
\beta(l,m)&=&\pi \int_0^\pi
\left(r\,((a_2\,b_2+2a_1\,b_1)\,I_1(|l|\,r)+2a_2\,b_2\,I_2(|l|\,r))\right)\dD r\\
&=& \f{\pi}{|l|^{2}}\int_0^{|l|\pi}\left(r\,
  ((a_2\,b_2+2a_1\,b_1)\,I_1(r)+2a_2\,b_2\,I_2(r))\right)\dD r
\\
&=&  \f{\pi}{|l|^{2}} \left( a_1\, b_1\,F_1(|l|\,\pi)\;+\;a_2\, b_2\,F_2(|l|\,\pi) \right),
\end{eqnarray*}
where $F_1$ and $F_2$ are defined by
$$
\left\{\begin{array}{l}
\label{eq:F1F2}
\ds F_1(x)\;=\;2\int_0^{x}r\,I_1(r)\,\dD r\;=\; 8\int_0^{x}r^{-2}(\sin
         r-r\cos r)\,\dD r\;=\;8\left(1-\f{\sin x}{x}\right),
\\[0.9em]
\ds F_2(x) \;=\;\int_0^{x} r\,(I_1(r)+2I_2(r))\,\dD r\;=\;
         4\int_0^{x}r^{-2}((r^2-1)\sin r+r\cos r)\,\dD r\;=\;4\left(\f{\sin x}{x}-\cos x\right).
       \end{array}\right.
     $$
From \eqref{eq:a12} and \eqref{eq:b12}, it is easy to see that
\ben\label{eq:ab12}
a_1\,b_1\,=\;\f{(l\cdot (l+m))(l\cdot (l-m))}{|l|^2} \;=\;\f{|l|^4-(l\cdot m)^2}{|l|^2},\quad a_2\,b_2\;=\;-\f{|l\times m|^2}{|l|^2}.
\een
Substituting this latter expression for $F_1$,  $F_2$ and \eqref{eq:ab12} into $\beta(l,m)$, we  have 
\beq
\beta(l,m)\;=\;\f{\pi}{|l|^4}\left( 8(|l|^4-(l\cdot m)^2) \left(1-\f{\sin (|l|\pi)}{|l|\pi}\right)-4|l\times m|^2\left(\f{\sin (|l|\pi)}{|l|\pi}-\cos (|l|\pi)\right) \right).
\eeq

Finally, we consider the case  where $|l|=0$, then we have
$$
\beta(0,m)\;=\;- \int_{\cB(0,\pi)}|z|^{-1}\,\Pi(z):m\otimes m \,dz=-
\int_{\cB(0,\pi)}|z|^{-3}|z\times m|^2\,\dD z.
$$
If $|m|=0$, then  $\beta(0,0)=0$; whereas if $|m|\neq 0$, let us set
$e_1=\f{m}{|m|}$ and $z=r\cos\theta\, e_1+r\sin\theta\, \sigma$ where
$\sigma\in \S^1 \,e_2$ with some unit vector $e_2$ perpendicular to $e_1$, hence it yields
\begin{eqnarray*}
\beta(0,m)&=&- \int_0^{\pi} \dD r\int_0^\pi \dD\theta \int_{\S^1}\dD \sigma\  r^{-3}\,(r|m|\sin \theta)^2\,r^2\sin\theta
  \\
  &=& -2\pi|m|^2  \int_0^\pi \f{4}{3}r^{ }\,dr\;=\; -\f{4|m|^2}{3 } \pi^3,
\end{eqnarray*}
which completes the proof.


\begin{thebibliography}{99}

\bibitem{AGT} Alonso, R. J., Gamba, I. M., and Tharkabhushanam, S. H. (2018). Convergence and Error Estimates for the Lagrangian-Based Conservative Spectral Method for Boltzmann Equations. SIAM Journal on Numerical Analysis, 56(6), 3534-3579.

\bibitem{Bi}Bird, G. A. (1994). Molecular Gas Dynamics and the Direct Simulation of Gas Flows. Oxford University Press.

\bibitem{B75}Bobylev, A. V.  (1975). Exact Solutions of the Boltzmann Equation. Akademiia Nauk SSSR Doklady, 225, 1296-1299.

\bibitem{BGZ}Bobylev, A. V., Gamba, I. M., and Zhang, C. (2017). On the Rate of Relaxation for the Landau Kinetic Equation and Related Models. Journal of Statistical Physics, 168(3), 535-548. 
  
\bibitem{BP}Bobylev, A. V., and Potapenko, I. F. (2013). Monte Carlo Methods and Their Analysis for Coulomb Collisions in Multicomponent Plasmas. Journal of Computational Physics, 246, 123-144.

\bibitem{BCDL}Buet, C., Cordier, S., Degond, P., and Lemou, M. (1997). Fast Algorithms for Numerical, Conservative, and Entropy Approximations of the Fokker–Planck–Landau Equation. Journal of Computational Physics, 133(2), 310-322. 

\bibitem{C15}Carrapatoso, K. (2015). On the Rate of Convergence to Equilibrium for the Homogeneous Landau Equation with Soft Potentials. Journal De Math\'ematiques Pures Et Appliqu\'ees, 104(2), 276-310. 

\bibitem{CDH}Carrapatoso, K., Desvillettes, L., and He, L. B. (2017). Estimates for the Large Time Behavior of the Landau Equation in the Coulomb Case. Archive for Rational Mechanics and Analysis, 224(2), 381-420. 

\bibitem{CM}Carrapatoso, K., and Mischler, S. (2017). Landau Equation for Very Soft and Coulomb Potentials Near Maxwellians. Annals of PDE, 3(1), 1. 

\bibitem{C16}Carrapatoso, K., Tristani, I., and Wu, K.-C. (2016). Cauchy Problem and Exponential Stability for the Inhomogeneous Landau Equation. Archive for Rational Mechanics and Analysis, 221. 

\bibitem{CDW}Carrillo, J. A., Delgadino, M. G., and Wu, J. (2023). Convergence of a Particle Method for a Regularized Spatially Homogeneous Landau Equation. Mathematical Models and Methods in Applied Sciences, 33(05), 971-1008. 

\bibitem{CHWW}Carrillo, J. A., Hu, J., Wang, L., and Wu, J. (2020). A Particle Method for the Homogeneous Landau Equation. Journal of Computational Physics: X, 7, 100066. 

\bibitem{CJT}Carrillo, J. A., Jin, S., and Tang, Y. (2022). Random Batch Particle Methods for the Homogeneous Landau Equation. Communications in Computational Physics, 31, 997-1019.

\bibitem{ref20}
Degond, P. and  Lucquin-Desreux, B. (1992). The Fokker-Planck
Asymptotics of the Boltzmann Collision Operator in the Coulomb Case. Mathematical Models and Methods in Applied Sciences, 02(02), 167-182.

\bibitem{DD}Degond, P., and Lucquin-Desreux, B. (1994). An Entropy Scheme for the Fokker-Planck Collision Operator of Plasma Kinetic Theory. Numerische Mathematik, 68(2), 239-262.

\bibitem{D15}Desvillettes, L. (2015). Entropy Dissipation Estimates for the Landau Equation in the Coulomb Case and Applications. Journal of Functional Analysis, 269(5), 1359-1403. 

\bibitem{DHJ}Desvillettes, L., He, L.-B., and Jiang, J.-C. (2023). A New Monotonicity Formula for the Spatially Homogeneous Landau Equation with Coulomb Potential and Its Applications. Journal of the European Mathematical Society, 26, 1747–1793.

\bibitem{DV1}Desvillettes, L., and Villani, C. (2000). On the Spatially Homogeneous Landau Equation for Hard Potentials Part I : Existence, Uniqueness and Smoothness. Communications in Partial Differential Equations, 25(1-2), 179-259. 

\bibitem{DV2}Desvillettes, L., and Villani, C. (2000). On the Spatially Homogeneous Landau Equation for Hard Potentials Part II : H-Theorem and Applications. Communications in Partial Differential Equations, 25(1-2), 261-298. 

\bibitem{DLXY}Du, K., Li, L., Xie, Y., and Yu, Y. (2025). A Structure-Preserving Collisional Particle Method for the Landau Kinetic Equation. arXiv pre-print server, arXiv:2501.00263.

\bibitem{F20}Filbet, F. (2020). A Spectral Collocation Method for the Landau Equation in Plasma Physics. arXiv pre-print server, arXiv:2006.15885.

\bibitem{FM}Filbet, F., and Mouhot, C. (2011). Analysis of Spectral Methods for the Homogeneous Boltzmann Equation, Transactions of the American Mathematical Society, 363(4), 1947-1980. 

\bibitem{FP}Filbet, F., and Pareschi, L. (2002). A Numerical Method for the Accurate Solution of the Fokker–Planck–Landau Equation in the Nonhomogeneous Case. Journal of Computational Physics, 179(1), 1-26. 

\bibitem{F09}Fournier, N. (2009). Particle Approximation of Some Landau Equations. Kinetic and Related Models, 2(3), 451-464.

\bibitem{FG}Fournier, N., and Guérin, H. (2009). Well-Posedness of the Spatially Homogeneous Landau Equation for Soft Potentials. Journal of Functional Analysis, 256(8), 2542-2560. 

\bibitem{GGL}Golding, W., Gualdani, M., and Loher, A. (2025). Global Smooth Solutions to the Landau–Coulomb Equation in $L^{3/2}$. Archive for Rational Mechanics and Analysis, 249(3), 34. 

\bibitem{GS}Guillen, N., and Silvestre, L. (2025). The Landau Equation Does Not Blow Up. Acta Mathematica, 234, 315-375.

\bibitem{Guo}Guo, Y. (2002). The Landau Equation in a Periodic Box. Communications in Mathematical Physics, 231(3), 391-434.

\bibitem{HE18}He, L.-B. (2018). Sharp Bounds for Boltzmann and Landau Collision Operators. Annales Scientifiques De L Ecole Normale Superieure, 51(5), 1253-1341.

\bibitem{HJ22}He, L.-B., and Ji, J. (2022). On the Weak Solution to the Spatially Homogeneous Boltzmann Equation with Moderate Soft Potentials. International Journal of Mathematics, 33(09), 68. 

\bibitem{HJ23}He, L.-B., and Ji, J. (2023). Regularity Estimates for the Non-Cutoff Soft Potential Boltzmann Equation with Typical Rough and Slowly Decaying Data. arXiv pre-print server, arXiv:2305.05856.

\bibitem{HJL}He, L.-B., Ji, J., and Luo, Y. Existence, Uniqueness and Smoothing Effect for Spatially Homogeneous Landau-Coulomb Equation in $H^{-\f12}$ Space with Polynomial Tail. arXiv pre-print server, arXiv:2412.07287.
 
\bibitem{HST} Henderson, C., Snelson, S., and Tarfulea, A. (2020). Local Solutions of the Landau Equation with Rough, Slowly Decaying Initial Data. Annales de l'Institut Henri Poincaré C, Analyse non linéaire, 37(6), 1345-1377. 

\bibitem{HQY}Hu, J., Qi, K., and Yang, T. (2021). A New Stability and Convergence Proof of the Fourier-Galerkin Spectral Method for the Spatially Homogeneous Boltzmann Equation. SIAM Journal on Numerical Analysis, 59(2), 613-633. 

\bibitem{HW}Huang, Y., and Wang, L. (2025). A Score-based Particle Method for Homogeneous Landau Equation. Journal of Computational Physics, 536, 114053.

\bibitem{IHW}Ilin, V., Hu, J., and Wang, Z. (2025). Transport Based Particle Methods for the Fokker-Planck-Landau Equation. Communications in Mathematical Sciences, 23, 1763-1788.

\bibitem{KW}Krook, M., and Wu, T. T. (1977). Exact Solutions of the Boltzmann Equation. The Physics of Fluids, 20(10), 1589-1595. 

\bibitem{LJH}Lee, J. Y., Jang, J., and Hwang, H. J. (2023). opPINN: Physics-Informed Neural Network with Operator Learning to Approximate Solutions to the Fokker-Planck-Landau Equation. Journal of Computational Physics, 480, 112031. 

\bibitem{L98} Lemou, M. (1998). Multipole Expansions for the Fokker-Planck-Landau Operator. Numerische Mathematik, 78(4), 597-618. 

\bibitem{LRW}Li, R., Ren, Y., and Wang, Y. (2021). Hermite Spectral Method for Fokker-Planck-Landau Equation Modeling Collisional Plasma. Journal of Computational Physics, 434, 110235. 
  
\bibitem{LWW}Li, R., Wang, Y., and Wang, Y. (2020). Approximation to Singular Quadratic Collision Model in Fokker-Planck-Landau Equation. SIAM Journal on Scientific Computing, 42(3), B792-B815. 

\bibitem{ref41} Lifshitz, E. M. (1992). Perspectives in Theoretical Physics. Pergamon Press.
 
\bibitem{MC}Miller, M. A., Churchill, R. M., Dener, A., Chang, C. S., Munson, T., and Hager, R. (2021). Encoder–decoder Neural Network for Solving the Nonlinear Fokker–Planck–Landau Collision Operator in XGC. Journal of Plasma Physics, 87(2), 905870211, Article 905870211.  

\bibitem{NLY}Noh, H., Lee, J., and Yoon, E. (2025). FPL-net: A Deep Learning Framework for Solving the Nonlinear Fokker–Planck–Landau Collision Operator for Anisotropic Temperature Relaxation. Journal of Computational Physics, 523, 113665. 

\bibitem{PRT}Pareschi, L., Russo, G., and Toscani, G. (2000). Fast Spectral Methods for the Fokker–Planck–Landau Collision Operator. Journal of Computational Physics, 165(1), 216-236. 

\bibitem{PG}Pennie, C. A., and Gamba, I. M. (2019). Entropy Decay Rates for Conservative Spectral Schemes Modeling Fokker-Planck-Landau Type Flows in the Mean Field Limit. arXiv pre-print server, arXiv:1910.03110.
  
\bibitem{TV00}Toscani, G., and Villani, C. (2000). On the Trend to Equilibrium for Some Dissipative Systems with Slowly Increasing a Priori Bounds. Journal of Statistical Physics, 98(5), 1279-1309. 

\bibitem{Vi98}Villani, C. (1998). On a New Class of Weak Solutions to the Spatially Homogeneous Boltzmann and Landau Equations. Archive for Rational Mechanics and Analysis, 143(3), 273-307.

\bibitem{ref55}
Villani, C. (1998). On the Spatially Homogeneous Landau Equation for Maxwellian Molecules. Mathematical Models and Methods in Applied Sciences. 08(06), 957-983.
 
\bibitem{W17}Wollman, S. (2017). Numerical Approximation of the Spatially Homogeneous Fokker–Planck–Landau Equation. Journal of Computational and Applied Mathematics, 324, 173-203. 

\bibitem{W24}Wollman, S. (2024). Finite Difference Approximations of the Spatially Homogeneous Fokker–Planck–Landau Equation. Journal of Computational and Applied Mathematics, 449, 115928. 

\bibitem{ZG}Zhang, C., and Gamba, I. M. (2017). A Conservative Scheme for Vlasov Poisson Landau Modeling Collisional Plasmas. Journal of Computational Physics, 340, 470-497.

   
\end{thebibliography}
\end{document}